\newcommand{\vertiii}[1]{{\left\vert\kern-0.25ex\left\vert\kern-0.25ex\left\vert #1
    \right\vert\kern-0.25ex\right\vert\kern-0.25ex\right\vert}}
\numberwithin{equation}{section}
\newtheorem{lemma}{Lemma}
\newtheorem{corollary}{Corollary}
\newtheorem{definition}{Definition}
\newtheorem{proposition}{Proposition}
\newtheorem{theorem}{Theorem}
\newtheorem{remark}{Remark}
\newtheorem{example}{Example}
\chardef\@x10\chardef\@xv60
\def\tcitime{
\def\@time{%
  \@minute\time\@hour\@minute\divide\@hour\@xv
  \ifnum\@hour<\@x 0\fi\the\@hour:%
  \multiply\@hour\@xv\advance\@minute-\@hour
  \ifnum\@minute<\@x 0\fi\the\@minute
  }}%
\def\QCTOpt[#1]#2{%
  \def\QCTOptB{#1}
  \def\QCTOptA{#2}
}
\def\QCTNOpt#1{%
  \def\QCTOptA{#1}
  \let\QCTOptB\empty
}
\def\Qct{%
  \@ifnextchar[{%
    \QCTOpt}{\QCTNOpt}
}
\def\QCBOpt[#1]#2{%
  \def\QCBOptB{#1}
  \def\QCBOptA{#2}
}
\def\QCBNOpt#1{%
  \def\QCBOptA{#1}
  \let\QCBOptB\empty
}
\def\Qcb{%
  \@ifnextchar[{%
    \QCBOpt}{\QCBNOpt}
}
\def\PrepCapArgs{%
  \ifx\QCBOptA\empty
    \ifx\QCTOptA\empty
      {}%
    \else
      \ifx\QCTOptB\empty
        {\QCTOptA}%
      \else
        [\QCTOptB]{\QCTOptA}%
      \fi
    \fi
  \else
    \ifx\QCBOptA\empty
      {}%
    \else
      \ifx\QCBOptB\empty
        {\QCBOptA}%
      \else
        [\QCBOptB]{\QCBOptA}%
      \fi
    \fi
  \fi
}
\def\GRAPHICSPS#1{%
 \ifcase\GRAPHICSTYPE
   \special{ps: #1}%
 \or
   \special{language "PS", include "#1"}%
 \fi
}%
\def\graffile#1#2#3#4{%
    \bgroup
    \leavevmode
    \@ifundefined{bbl@deactivate}{\def~{\string~}}{\activesoff}
    \raise -#4 \BOXTHEFRAME{%
        \hbox to #2{\raise #3\hbox to #2{\null #1\hfil}}}%
    \egroup
}%
\def\draftbox#1#2#3#4{%
 \leavevmode\raise -#4 \hbox{%
  \frame{\rlap{\protect\tiny #1}\hbox to #2%
   {\vrule height#3 width\z@ depth\z@\hfil}%
  }%
 }%
}%
\newif\ifwasdraft
\def\GRAPHIC#1#2#3#4#5{%
 \ifnum\draft=\@ne\draftbox{#2}{#3}{#4}{#5}%
  \else\graffile{#1}{#3}{#4}{#5}%
  \fi
 }%
\def\addtoLaTeXparams#1{%
    \edef\LaTeXparams{\LaTeXparams #1}}%
\newif\ifBoxFrame \BoxFramefalse
\newif\ifOverFrame \OverFramefalse
\newif\ifUnderFrame \UnderFramefalse
\def\BOXTHEFRAME#1{%
   \hbox{%
      \ifBoxFrame
         \frame{#1}%
      \else
         {#1}%
      \fi
   }%
}
\def\doFRAMEparams#1{\BoxFramefalse\OverFramefalse\UnderFramefalse\readFRAMEparams#1\end}%
\def\readFRAMEparams#1{%
 \ifx#1\end%
  \let\next=\relax
  \else
  \ifx#1i\dispkind=\z@\fi
  \ifx#1d\dispkind=\@ne\fi
  \ifx#1f\dispkind=\tw@\fi
  \ifx#1t\addtoLaTeXparams{t}\fi
  \ifx#1b\addtoLaTeXparams{b}\fi
  \ifx#1p\addtoLaTeXparams{p}\fi
  \ifx#1h\addtoLaTeXparams{h}\fi
  \ifx#1X\BoxFrametrue\fi
  \ifx#1O\OverFrametrue\fi
  \ifx#1U\UnderFrametrue\fi
  \ifx#1w
    \ifnum\draft=1\wasdrafttrue\else\wasdraftfalse\fi
    \draft=\@ne
  \fi
  \let\next=\readFRAMEparams
  \fi
 \next
 }%
\def\IFRAME#1#2#3#4#5#6{%
      \bgroup
      \let\QCTOptA\empty
      \let\QCTOptB\empty
      \let\QCBOptA\empty
      \let\QCBOptB\empty
      #6%
      \parindent=0pt%
      \leftskip=0pt
      \rightskip=0pt
      \setbox0 = \hbox{\QCBOptA}%
      \@tempdima = #1\relax
      \ifOverFrame
          \typeout{This is not implemented yet}%
          \show\HELP
      \else
         \ifdim\wd0>\@tempdima
            \advance\@tempdima by \@tempdima
            \ifdim\wd0 >\@tempdima
               \textwidth=\@tempdima
               \setbox1 =\vbox{%
                  \noindent\hbox to \@tempdima{\hfill\GRAPHIC{#5}{#4}{#1}{#2}{#3}\hfill}\\%
                  \noindent\hbox to \@tempdima{\parbox[b]{\@tempdima}{\QCBOptA}}%
               }%
               \wd1=\@tempdima
            \else
               \textwidth=\wd0
               \setbox1 =\vbox{%
                 \noindent\hbox to \wd0{\hfill\GRAPHIC{#5}{#4}{#1}{#2}{#3}\hfill}\\%
                 \noindent\hbox{\QCBOptA}%
               }%
               \wd1=\wd0
            \fi
         \else
            \ifdim\wd0>0pt
              \hsize=\@tempdima
              \setbox1 =\vbox{%
                \unskip\GRAPHIC{#5}{#4}{#1}{#2}{0pt}%
                \break
                \unskip\hbox to \@tempdima{\hfill \QCBOptA\hfill}%
              }%
              \wd1=\@tempdima
           \else
              \hsize=\@tempdima
              \setbox1 =\vbox{%
                \unskip\GRAPHIC{#5}{#4}{#1}{#2}{0pt}%
              }%
              \wd1=\@tempdima
           \fi
         \fi
         \@tempdimb=\ht1
         \advance\@tempdimb by \dp1
         \advance\@tempdimb by -#2%
         \advance\@tempdimb by #3%
         \leavevmode
         \raise -\@tempdimb \hbox{\box1}%
      \fi
      \egroup%
}%
\def\DFRAME#1#2#3#4#5{%
 \begin{center}
     \let\QCTOptA\empty
     \let\QCTOptB\empty
     \let\QCBOptA\empty
     \let\QCBOptB\empty
     \ifOverFrame 
        #5\QCTOptA\par
     \fi
     \GRAPHIC{#4}{#3}{#1}{#2}{\z@}
     \ifUnderFrame 
        \nobreak\par\nobreak#5\QCBOptA
     \fi
 \end{center}%
 }%
\def\FFRAME#1#2#3#4#5#6#7{%
 \begin{figure}[#1]%
  \let\QCTOptA\empty
  \let\QCTOptB\empty
  \let\QCBOptA\empty
  \let\QCBOptB\empty
  \ifOverFrame
    #4
    \ifx\QCTOptA\empty
    \else
      \ifx\QCTOptB\empty
        \caption{\QCTOptA}%
      \else
        \caption[\QCTOptB]{\QCTOptA}%
      \fi
    \fi
    \ifUnderFrame\else
      \label{#5}%
    \fi
  \else
    \UnderFrametrue%
  \fi
  \begin{center}\GRAPHIC{#7}{#6}{#2}{#3}{\z@}\end{center}%
  \ifUnderFrame
    #4
    \ifx\QCBOptA\empty
      \caption{}%
    \else
      \ifx\QCBOptB\empty
        \caption{\QCBOptA}%
      \else
        \caption[\QCBOptB]{\QCBOptA}%
      \fi
    \fi
    \label{#5}%
  \fi
  \end{figure}%
 }%
\def\makeactives{
  \catcode`\"=\active
  \catcode`\;=\active
  \catcode`\:=\active
  \catcode`\'=\active
  \catcode`\~=\active
}
   \gdef\activesoff{%
      \def"{\string"}
      \def;{\string;}
      \def:{\string:}
      \def'{\string'}
      \def~{\string~}
    }
\def\FRAME#1#2#3#4#5#6#7#8{%
 \bgroup
 \ifnum\draft=\@ne
   \wasdrafttrue
 \else
   \wasdraftfalse%
 \fi
 \def\LaTeXparams{}%
 \dispkind=\z@
 \def\LaTeXparams{}%
 \doFRAMEparams{#1}%
 \ifnum\dispkind=\z@\IFRAME{#2}{#3}{#4}{#7}{#8}{#5}\else
  \ifnum\dispkind=\@ne\DFRAME{#2}{#3}{#7}{#8}{#5}\else
   \ifnum\dispkind=\tw@
    \edef\@tempa{\noexpand\FFRAME{\LaTeXparams}}%
    \@tempa{#2}{#3}{#5}{#6}{#7}{#8}%
    \fi
   \fi
  \fi
  \ifwasdraft\draft=1\else\draft=0\fi{}%
  \egroup
 }%
\def\TEXUX#1{"texux"}
\long\def\QQQ#1#2{%
     \long\expandafter\def\csname#1\endcsname{#2}}%
\long\def\QQA#1#2{}%
\def\QTR#1#2{{\csname#1\endcsname #2}}
\def\EXPAND#1[#2]#3{}%
\def\NOEXPAND#1[#2]#3{}%
\def\LaTeXparent#1{}%
\def\ChildStyles#1{}%
\def\ChildDefaults#1{}%
\def\QTagDef#1#2#3{}%
  \providecommand{\UNICODE}[2][]{}
\def\QQfnmark#1{\footnotemark}
 \def\abstract{%
  \if@twocolumn
   \section*{Abstract (Not appropriate in this style!)}%
   \else \small 
   \begin{center}{\bf Abstract\vspace{-.5em}\vspace{\z@}}\end{center}%
   \quotation 
   \fi
  }%
   \def\registered{\relax\ifmmode{}\r@gistered
                    \else$\m@th\r@gistered$\fi}%
 \def\r@gistered{^{\ooalign
  {\hfil\raise.07ex\hbox{$\scriptstyle\rm\text{R}$}\hfil\crcr
  \mathhexbox20D}}}}{}%
\newdimen\theight
\def\Column{%
 \vadjust{\setbox\z@=\hbox{\scriptsize\quad\quad tcol}%
  \theight=\ht\z@\advance\theight by \dp\z@\advance\theight by \lineskip
  \kern -\theight \vbox to \theight{%
   \rightline{\rlap{\box\z@}}%
   \vss
   }%
  }%
 }%
\def\qed{%
 \ifhmode\unskip\nobreak\fi\ifmmode\ifinner\else\hskip5\p@\fi\fi
 \hbox{\hskip5\p@\vrule width4\p@ height6\p@ depth1.5\p@\hskip\p@}%
 }%
\def\miss{\hbox{\vrule height2\p@ width 2\p@ depth\z@}}%
\def\tcol#1{{\baselineskip=6\p@ \vcenter{#1}} \Column}  %
\def\newfmtname{LaTeX2e}
  \DeclareOldFontCommand{\rm}{\normalfont\rmfamily}{\mathrm}
  \DeclareOldFontCommand{\sf}{\normalfont\sffamily}{\mathsf}
  \DeclareOldFontCommand{\tt}{\normalfont\ttfamily}{\mathtt}
  \DeclareOldFontCommand{\bf}{\normalfont\bfseries}{\mathbf}
  \DeclareOldFontCommand{\it}{\normalfont\itshape}{\mathit}
  \DeclareOldFontCommand{\sl}{\normalfont\slshape}{\@nomath\sl}
  \DeclareOldFontCommand{\sc}{\normalfont\scshape}{\@nomath\sc}
\def\alpha{{\Greekmath 010B}}%
\def\beta{{\Greekmath 010C}}%
\def\gamma{{\Greekmath 010D}}%
\def\delta{{\Greekmath 010E}}%
\def\epsilon{{\Greekmath 010F}}%
\def\zeta{{\Greekmath 0110}}%
\def\eta{{\Greekmath 0111}}%
\def\theta{{\Greekmath 0112}}%
\def\iota{{\Greekmath 0113}}%
\def\kappa{{\Greekmath 0114}}%
\def\lambda{{\Greekmath 0115}}%
\def\mu{{\Greekmath 0116}}%
\def\nu{{\Greekmath 0117}}%
\def\xi{{\Greekmath 0118}}%
\def\pi{{\Greekmath 0119}}%
\def\rho{{\Greekmath 011A}}%
\def\sigma{{\Greekmath 011B}}%
\def\tau{{\Greekmath 011C}}%
\def\upsilon{{\Greekmath 011D}}%
\def\phi{{\Greekmath 011E}}%
\def\chi{{\Greekmath 011F}}%
\def\psi{{\Greekmath 0120}}%
\def\omega{{\Greekmath 0121}}%
\def\varepsilon{{\Greekmath 0122}}%
\def\vartheta{{\Greekmath 0123}}%
\def\varpi{{\Greekmath 0124}}%
\def\varrho{{\Greekmath 0125}}%
\def\varsigma{{\Greekmath 0126}}%
\def\varphi{{\Greekmath 0127}}%
\def\nabla{{\Greekmath 0272}}
\def\FindBoldGroup{%
   {\setbox0=\hbox{$\mathbf{x\global\edef\theboldgroup{\the\mathgroup}}$}}%
}
\def\Greekmath#1#2#3#4{%
    \if@compatibility
        \ifnum\mathgroup=\symbold
           \mathchoice{\mbox{\boldmath$\displaystyle\mathchar"#1#2#3#4$}}%
                      {\mbox{\boldmath$\textstyle\mathchar"#1#2#3#4$}}%
                      {\mbox{\boldmath$\scriptstyle\mathchar"#1#2#3#4$}}%
                      {\mbox{\boldmath$\scriptscriptstyle\mathchar"#1#2#3#4$}}%
        \else
           \mathchar"#1#2#3#4%
        \fi 
    \else 
        \FindBoldGroup
        \ifnum\mathgroup=\theboldgroup 
           \mathchoice{\mbox{\boldmath$\displaystyle\mathchar"#1#2#3#4$}}%
                      {\mbox{\boldmath$\textstyle\mathchar"#1#2#3#4$}}%
                      {\mbox{\boldmath$\scriptstyle\mathchar"#1#2#3#4$}}%
                      {\mbox{\boldmath$\scriptscriptstyle\mathchar"#1#2#3#4$}}%
        \else
           \mathchar"#1#2#3#4%
        \fi     	    
	  \fi}
\newif\ifGreekBold  \GreekBoldfalse
\let\SAVEPBF=\pbf
\def\pbf{\GreekBoldtrue\SAVEPBF}%
  \newcounter{equationnumber}  
  \def\mathletters{%
     \addtocounter{equation}{1}
     \edef\@currentlabel{\theequation}%
     \setcounter{equationnumber}{\c@equation}
     \setcounter{equation}{0}%
     \edef\theequation{\@currentlabel\noexpand\alph{equation}}%
  }
    \def\BibTeX{{\rm B\kern-.05em{\sc i\kern-.025em b}\kern-.08em
                 T\kern-.1667em\lower.7ex\hbox{E}\kern-.125emX}}}{}%
\def\AmS{{\protect\usefont{OMS}{cmsy}{m}{n}%
                A\kern-.1667em\lower.5ex\hbox{M}\kern-.125emS}}}{}%
\def\@@eqncr{\let\@tempa\relax
    \ifcase\@eqcnt \def\@tempa{& & &}\or \def\@tempa{& &}%
      \else \def\@tempa{&}\fi
     \@tempa
     \if@eqnsw
        \iftag@
           \@taggnum
        \else
           \@eqnnum\stepcounter{equation}%
        \fi
     \fi
     \global\tag@false
     \global\@eqnswtrue
     \global\@eqcnt\z@\cr}
\def\TCItag{\@ifnextchar*{\@TCItagstar}{\@TCItag}}
\def\@TCItag#1{%
    \global\tag@true
    \global\def\@taggnum{(#1)}}
\def\@TCItagstar*#1{%
    \global\tag@true
    \global\def\@taggnum{#1}}
\let\DOTSI\relax
\def\RIfM@{\relax\ifmmode}%
\def\FN@{\futurelet\next}%
\def\iint{\DOTSI\intno@\tw@\FN@\ints@}%
\def\iiint{\DOTSI\intno@\thr@@\FN@\ints@}%
\def\iiiint{\DOTSI\intno@4 \FN@\ints@}%
\def\idotsint{\DOTSI\intno@\z@\FN@\ints@}%
\def\ints@{\findlimits@\ints@@}%
\newif\iflimtoken@
\newif\iflimits@
\def\findlimits@{\limtoken@true\ifx\next\limits\limits@true
 \else\ifx\next\nolimits\limits@false\else
 \limtoken@false\ifx\ilimits@\nolimits\limits@false\else
 \ifinner\limits@false\else\limits@true\fi\fi\fi\fi}%
\def\multint@{\int\ifnum\intno@=\z@\intdots@                          
 \else\intkern@\fi                                                    
 \ifnum\intno@>\tw@\int\intkern@\fi                                   
 \ifnum\intno@>\thr@@\int\intkern@\fi                                 
 \int}
\def\multintlimits@{\intop\ifnum\intno@=\z@\intdots@\else\intkern@\fi
 \ifnum\intno@>\tw@\intop\intkern@\fi
 \ifnum\intno@>\thr@@\intop\intkern@\fi\intop}%
\def\intic@{%
    \mathchoice{\hskip.5em}{\hskip.4em}{\hskip.4em}{\hskip.4em}}%
\def\negintic@{\mathchoice
 {\hskip-.5em}{\hskip-.4em}{\hskip-.4em}{\hskip-.4em}}%
\def\ints@@{\iflimtoken@                                              
 \def\ints@@@{\iflimits@\negintic@
   \mathop{\intic@\multintlimits@}\limits                             
  \else\multint@\nolimits\fi                                          
  \eat@}
 \else                                                                
 \def\ints@@@{\iflimits@\negintic@
  \mathop{\intic@\multintlimits@}\limits\else
  \multint@\nolimits\fi}\fi\ints@@@}%
\def\intkern@{\mathchoice{\!\!\!}{\!\!}{\!\!}{\!\!}}%
\def\plaincdots@{\mathinner{\cdotp\cdotp\cdotp}}%
\def\intdots@{\mathchoice{\plaincdots@}%
 {{\cdotp}\mkern1.5mu{\cdotp}\mkern1.5mu{\cdotp}}%
 {{\cdotp}\mkern1mu{\cdotp}\mkern1mu{\cdotp}}%
 {{\cdotp}\mkern1mu{\cdotp}\mkern1mu{\cdotp}}}%
\def\RIfM@{\relax\protect\ifmmode}
\def\text{\RIfM@\expandafter\text@\else\expandafter\mbox\fi}
\let\nfss@text\text
\def\text@#1{\mathchoice
   {\textdef@\displaystyle\f@size{#1}}%
   {\textdef@\textstyle\tf@size{\firstchoice@false #1}}%
   {\textdef@\textstyle\sf@size{\firstchoice@false #1}}%
   {\textdef@\textstyle \ssf@size{\firstchoice@false #1}}%
   \glb@settings}
\def\textdef@#1#2#3{\hbox{{%
                    \everymath{#1}%
                    \let\f@size#2\selectfont
                    #3}}}
\newif\iffirstchoice@
\def\Let@{\relax\iffalse{\fi\let\\=\cr\iffalse}\fi}%
\def\vspace@{\def\vspace##1{\crcr\noalign{\vskip##1\relax}}}%
\def\multilimits@{\bgroup\vspace@\Let@
 \baselineskip\fontdimen10 \scriptfont\tw@
 \advance\baselineskip\fontdimen12 \scriptfont\tw@
 \lineskip\thr@@\fontdimen8 \scriptfont\thr@@
 \lineskiplimit\lineskip
 \vbox\bgroup\ialign\bgroup\hfil$\m@th\scriptstyle{##}$\hfil\crcr}%
\def\Sb{_\multilimits@}%
\def\endSb{\crcr\egroup\egroup\egroup}%
\def\Sp{^\multilimits@}%
\newdimen\ex@
\def\rightarrowfill@#1{$#1\m@th\mathord-\mkern-6mu\cleaders
 \hbox{$#1\mkern-2mu\mathord-\mkern-2mu$}\hfill
 \mkern-6mu\mathord\rightarrow$}%
\def\leftarrowfill@#1{$#1\m@th\mathord\leftarrow\mkern-6mu\cleaders
 \hbox{$#1\mkern-2mu\mathord-\mkern-2mu$}\hfill\mkern-6mu\mathord-$}%
\def\leftrightarrowfill@#1{$#1\m@th\mathord\leftarrow
\mkern-6mu\cleaders
 \hbox{$#1\mkern-2mu\mathord-\mkern-2mu$}\hfill
 \mkern-6mu\mathord\rightarrow$}%
\def\overrightarrow{\mathpalette\overrightarrow@}%
\def\overrightarrow@#1#2{\vbox{\ialign{##\crcr\rightarrowfill@#1\crcr
 \noalign{\kern-\ex@\nointerlineskip}$\m@th\hfil#1#2\hfil$\crcr}}}%
\def\overleftarrow{\mathpalette\overleftarrow@}%
\def\overleftarrow@#1#2{\vbox{\ialign{##\crcr\leftarrowfill@#1\crcr
 \noalign{\kern-\ex@\nointerlineskip}$\m@th\hfil#1#2\hfil$\crcr}}}%
\def\overleftrightarrow{\mathpalette\overleftrightarrow@}%
\def\overleftrightarrow@#1#2{\vbox{\ialign{##\crcr
   \leftrightarrowfill@#1\crcr
 \noalign{\kern-\ex@\nointerlineskip}$\m@th\hfil#1#2\hfil$\crcr}}}%
\def\underrightarrow{\mathpalette\underrightarrow@}%
\def\underrightarrow@#1#2{\vtop{\ialign{##\crcr$\m@th\hfil#1#2\hfil
  $\crcr\noalign{\nointerlineskip}\rightarrowfill@#1\crcr}}}%
\def\underleftarrow{\mathpalette\underleftarrow@}%
\def\underleftarrow@#1#2{\vtop{\ialign{##\crcr$\m@th\hfil#1#2\hfil
  $\crcr\noalign{\nointerlineskip}\leftarrowfill@#1\crcr}}}%
\def\underleftrightarrow{\mathpalette\underleftrightarrow@}%
\def\underleftrightarrow@#1#2{\vtop{\ialign{##\crcr$\m@th
  \hfil#1#2\hfil$\crcr
 \noalign{\nointerlineskip}\leftrightarrowfill@#1\crcr}}}%
\def\qopnamewl@#1{\mathop{\operator@font#1}\nlimits@}
\let\nlimits@\displaylimits
\def\setboxz@h{\setbox\z@\hbox}
\def\varlim@#1#2{\mathop{\vtop{\ialign{##\crcr
 \hfil$#1\m@th\operator@font lim$\hfil\crcr
 \noalign{\nointerlineskip}#2#1\crcr
 \noalign{\nointerlineskip\kern-\ex@}\crcr}}}}
 \def\rightarrowfill@#1{\m@th\setboxz@h{$#1-$}\ht\z@\z@
  $#1\copy\z@\mkern-6mu\cleaders
  \hbox{$#1\mkern-2mu\box\z@\mkern-2mu$}\hfill
  \mkern-6mu\mathord\rightarrow$}
\def\leftarrowfill@#1{\m@th\setboxz@h{$#1-$}\ht\z@\z@
  $#1\mathord\leftarrow\mkern-6mu\cleaders
  \hbox{$#1\mkern-2mu\copy\z@\mkern-2mu$}\hfill
  \mkern-6mu\box\z@$}
\def\projlim{\qopnamewl@{proj\,lim}}
\def\injlim{\qopnamewl@{inj\,lim}}
\def\varinjlim{\mathpalette\varlim@\rightarrowfill@}
\def\varprojlim{\mathpalette\varlim@\leftarrowfill@}
\def\varliminf{\mathpalette\varliminf@{}}
\def\varliminf@#1{\mathop{\underline{\vrule\@depth.2\ex@\@width\z@
   \hbox{$#1\m@th\operator@font lim$}}}}
\def\varlimsup{\mathpalette\varlimsup@{}}
\def\varlimsup@#1{\mathop{\overline
  {\hbox{$#1\m@th\operator@font lim$}}}}
\def\align{\@verbatim \frenchspacing\@vobeyspaces \@alignverbatim
You are using the "align" environment in a style in which it is not defined.}
\let\csname endalign*\endcsname =\endtrivlist
\def\alignat{\@verbatim \frenchspacing\@vobeyspaces \@alignatverbatim
You are using the "alignat" environment in a style in which it is not defined.}
\let\csname endalignat*\endcsname =\endtrivlist
\def\xalignat{\@verbatim \frenchspacing\@vobeyspaces \@xalignatverbatim
You are using the "xalignat" environment in a style in which it is not defined.}
\let\csname endxalignat*\endcsname =\endtrivlist
\def\gather{\@verbatim \frenchspacing\@vobeyspaces \@gatherverbatim
You are using the "gather" environment in a style in which it is not defined.}
\let\csname endgather*\endcsname =\endtrivlist
\def\multiline{\@verbatim \frenchspacing\@vobeyspaces \@multilineverbatim
You are using the "multiline" environment in a style in which it is not defined.}
\let\csname endmultiline*\endcsname =\endtrivlist
\def\arrax{\@verbatim \frenchspacing\@vobeyspaces \@arraxverbatim
You are using a type of "array" construct that is only allowed in AmS-LaTeX.}
\def\tabulax{\@verbatim \frenchspacing\@vobeyspaces \@tabulaxverbatim
You are using a type of "tabular" construct that is only allowed in AmS-LaTeX.}
\let\csname endarrax*\endcsname =\endtrivlist
\let\csname endtabulax*\endcsname =\endtrivlist
 \def\endequation{%
     \ifmmode\ifinner 
      \iftag@
        \addtocounter{equation}{-1} 
        $\hfil
           \displaywidth\linewidth\@taggnum\egroup \endtrivlist
        \global\tag@false
        \global\@ignoretrue   
      \else
        $\hfil
           \displaywidth\linewidth\@eqnnum\egroup \endtrivlist
        \global\tag@false
        \global\@ignoretrue 
      \fi
     \else   
      \iftag@
        \addtocounter{equation}{-1} 
        \eqno \hbox{\@taggnum}
        \global\tag@false%
        $$\global\@ignoretrue
      \else
        \eqno \hbox{\@eqnnum}
        $$\global\@ignoretrue
      \fi
     \fi\fi
 } 
 \newif\iftag@ \tag@false
 \def\TCItag{\@ifnextchar*{\@TCItagstar}{\@TCItag}}
 \def\@TCItag#1{%
     \global\tag@true
     \global\def\@taggnum{(#1)}}
 \def\@TCItagstar*#1{%
     \global\tag@true
     \global\def\@taggnum{#1}}
     \def\tag{\@ifnextchar*{\@tagstar}{\@tag}}
     \def\@tag#1{%
         \global\tag@true
         \global\def\@taggnum{(#1)}}
     \def\@tagstar*#1{%
         \global\tag@true
         \global\def\@taggnum{#1}}
\begin{document}

\title{Pointwise gradient estimates in multi-dimensional slow diffusion
equations with a singular quenching term.}
\author{Nguyen Anh Dao\thanks{%
Institute of Mathematical Sciences, ShanghaiTech University, China.\newline
Email: \texttt{dnanh@shanghaitech.edu.cn}} \and Jesus Ildefonso D\'{\i}az%
\thanks{%
Instituto de Matem\'{a}tica Interdisciplinar, Universidad Complutense de
Madrid, 28040 Madrid Spain.\newline
Email: \texttt{jidiaz@ucm.es}} \and Quan Ba Hong Nguyen\thanks{%
UFR Math\'ematiques, Institut de Recherche Math\'ematique de Rennes (IRMAR),
Universit\'e de Rennes 1, Beaulieu, 35042 Rennes, France.\newline
Email: \texttt{nguyenquanbahong@gmail.com}} }
\maketitle

\begin{abstract}
We consider the high-dimensional equation, ${\partial _{t}}u-\Delta {u^{m}}+{%
u^{-\beta }}{\chi _{\left\{ {u>0}\right\} }}=0$, extending the mathematical
treatment made on 1992 by B. Kawohl and R. Kersner for the one-dimensional
case. Besides the existence of a very weak solution $u\in \mathcal{C}\left( %
\left[ 0,T\right] ;L_{\delta }^{1}\left( \Omega \right) \right) $, with ${%
u^{-\beta }}{\chi _{\left\{ {u>0}\right\} }}\in {L^{1}}\left( {\left( {0,T}%
\right) \times \Omega }\right) $, $\delta \left( x\right) =d\left(
x,\partial \Omega \right) $, we prove some pointwise gradient estimates \
for a certain range of the dimension $N$, $m\geq 1$ and $\beta \in \left(
0,m\right) $, mainly when the absorption dominates over the diffusion ($%
1\leq m<2+\beta $). In particular, a new kind of universal gradient estimate
is proved when $m+\beta \leq 2$. Several qualitative properties (such as the
finite time quenching phenomena and the finite speed of propagation) and the
study of the Cauchy problem are also considered.
\end{abstract}

\tableofcontents

\textbf{Mathematics Subject Classification (2010):} 35K55, 35K65, 35K67.

\noindent \textbf{Keywords:} \emph{Singular absorption and nonlinear
diffusion equations, pointwise gradient estimates, quenching phenomenon,
free boundary.}

\begin{center}
\emph{Dedicated to Laurent V\'{e}ron in occasion of his 70th birthday}
\end{center}

\section{Introduction and main results}

\subsection{Introduction}

The main goal of this paper is to extend to the high-dimensional case, the
1992 mathematical treatment made by B. Kawohl and R. Kersner \cite%
{Kawohl1992a} for a one-dimensional degenerate diffusion equation with a
singular absorption term. More precisely, we will study nonnegative
solutions of the following possibly degenerate reaction-diffusion
multi-dimensional problem

\begin{equation}
\left\{ 
\begin{split}
& {\partial _{t}}u-\Delta {u^{m}}+{u^{-\beta }}{\chi _{\left\{ {u>0}\right\}
}}=0,\mbox{ in }\left( {0,\infty }\right) \times \Omega , \\
& u^{m}=0,\mbox{ on }\left( {0,\infty }\right) \times \partial \Omega , \\
& u\left( {0,x}\right) ={u_{0}}\left( x\right) ,\mbox{ in }\Omega ,
\end{split}%
\right.   \tag{P}  \label{P}
\end{equation}%
where $\Omega $ is an open regular bounded domain of $\mathbb{R}^{N}$ (for
instance with $\partial \Omega $ of class $C^{1,\alpha }$, for some $\alpha
\in (0,1]$), $N\geq 1$, $m\geq 1$ ($m>1$ corresponds to a typical slow
diffusion) and mainly $\beta \in \left( 0,m\right) $ (some remarks will be
made on the case $\beta \geq m$ at the end of this paper). The case of the
whole space, $\Omega =\mathbb{R}^{N}$, will be treated separately. Here ${%
\chi _{\left\{ {u>0}\right\} }}$ denotes the characteristic function of the
set of points $\left( t,x\right) $ where $u\left( t,x\right) >0$, i.e.: 
\begin{equation*}
{\chi _{\left\{ {u>0}\right\} }}\left( {t,x}\right) :=\left\{ 
\begin{split}
& 1,\mbox{ if }u\left( {t,x}\right) >0, \\
& 0,\mbox{ if }u\left( {t,x}\right) =0.
\end{split}%
\right. 
\end{equation*}%
Note that the absorption term ${u^{-\beta }}{\chi _{\left\{ {u>0}\right\} }}$
becomes singular (and the diffusion becomes degenerate if $m>1$) when $u=0$,
and that by this normalization we have that ${u^{-\beta }}{\chi _{\left\{ {%
u>0}\right\} }}\left( t,x\right) =0$ if $u\left( t,x\right) =0$. Notice that
the boundary condition implies an automatic permanent singularity on the
boundary $\partial \Omega $, in contrast to other related problems in which
the singularity is permanently excluded of the boundary 
\begin{equation}
\left\{ 
\begin{split}
& {\partial _{t}}u-\Delta {u^{m}}+{u^{-\beta }}{\chi _{\left\{ {u>0}\right\}
}}=0,\mbox{ in }\left( {0,\infty }\right) \times \Omega , \\
& u^{m}=1,\mbox{ on }\left( {0,\infty }\right) \times \partial \Omega , \\
& u\left( {0,x}\right) ={u_{0}}\left( x\right) ,\mbox{ in }\Omega .
\end{split}%
\right.   \tag{P(1)}  \label{P(1)}
\end{equation}%
Notice also that the change of unknown $v=1-u^{m}$, with $u$ solution of %
\eqref{P(1)}, in the semilinear case ($m=1$), for instance, leads to the
formulation 
\begin{equation}
\left\{ 
\begin{split}
& {\partial _{t}}v-\Delta v=\frac{{{\chi _{\left\{ {v<1}\right\} }}}}{{{{%
\left( {1-v}\right) }^{\beta }}}},\mbox{ in }\left( {0,\infty }\right)
\times \Omega , \\
& v=0,\mbox{ on }\left( {0,\infty }\right) \times \partial \Omega , \\
& v\left( {0,x}\right) =1-{u_{0}}\left( x\right) ,\mbox{ in }\Omega .
\end{split}%
\right.   \label{P(Kawarada)}
\end{equation}%
In this way, the study of the associated Cauchy problem 
\begin{equation}
\left\{ 
\begin{split}
& {\partial _{t}}u-\Delta {u^{m}}+{u^{-\beta }}{\chi _{\left\{ {u>0}\right\}
}}=0,\mbox{ in }\left( {0,\infty }\right) \times {\mathbb{R}^{N}}, \\
& u\left( {0,x}\right) ={u_{0}}\left( x\right) ,\mbox{ in }{\mathbb{R}^{N}},
\end{split}%
\right.   \tag{CP}  \label{PC}
\end{equation}%
can be regarded from two different points of view according to the
assumptions made on the asymptotic behavior of the initial datum when $%
\left\vert x\right\vert \rightarrow +\infty $. The case ${u_{0}}\left(
x\right) \searrow 0$, as $\left\vert x\right\vert \rightarrow +\infty $, can
be considered as a limit of problems of the type \eqref{P}, and the case in
which $u_{0}\left( x\right) $ is growing with $\left\vert x\right\vert $, as 
$\left\vert x\right\vert \rightarrow +\infty $, corresponds to a limit of
problems of the type \eqref{P(1)} (see, e.g., \cite{Guo 2007}). Our main
goal in this paper is to analyze problems of the type \eqref{P} and %
\eqref{PC} when ${u_{0}}\left( x\right) \searrow 0$ as $\left\vert
x\right\vert \rightarrow +\infty $.

The literature on this type of problems increased very quickly in the last
decades. Problem \eqref{P} (and \eqref{P(1)}) was regarded as the limit case
of the regularized Langmuir-Hinshelwood model in chemical catalyst kinetics
(see \cite{Aris, Diaz1985, Diaz1987} for the elliptic case and \cite%
{Bandle1986, Phillips1987} for the parabolic equation). Some regularized
singular absorption terms also arise in some models in enzyme kinetics (\cite%
{Banks1975}). See also many other references in the survey \cite%
{Hernandez2006}.

As mentioned before, what makes specially interesting equations like %
\eqref{P} is the fact that the solutions may raise to a free boundary
defined as $\partial \left\{ \left( t,x\right) ;u\left( t,x\right)
>0\right\} $. In some contexts, problem \eqref{P(1)} was denoted as \textit{%
the} \textit{quenching problem}. It was soon pointed out the appearance of a
blow-up time for $\partial _{t}u$ at the first time $T_{c}>0$ in which $%
u\left( T_{c},x\right) =0$ at some point $x\in \Omega $ (see, e.g., \cite%
{Kawarada1974/75, Levine1993, Phillips1987}). More recently, parabolic
problems with a singular absorption term of this type have been investigated
by many authors (see, e.g., \cite{Dao2017, Dao2019b, Dao2016b, Davila2005,
Kawohl1996, Levine1993, Phillips1987, Winkler2007}, and references therein).
Concerning the associate semilinear Cauchy problem we mention the papers 
\cite{Galactionov-Vazquez1995}, \cite{Gilding2003, Guo 2007}, and their
references. The case $\beta \geq m$ presents special difficulties when the
free boundary $\partial \left\{ \left( t,x\right) ;u\left( t,x\right)
>0\right\} $ is a nonempty hypersurface. This set corresponds to the
so-called set of \textit{rupture points} in the study of thin films (\cite%
{Witelski2000}). This case, $\beta \geq m$, also arises in the modeling of
micro-electromechanical systems (MEMS), in which mainly $m=1$ and $\beta =2$
(\cite{Guo 2007, Pelesio2002}).

A great amount of the previous papers in the literature concern only with
the one-dimensional case. To explain some historical progresses in founding
gradient estimates for such kind of problems we start by mentioning that the
existence of weak solutions to \eqref{P} was obtained firstly by Phillips 
\cite{Phillips1987} for the case $N\geq 1$, $m=1$, and $\beta \in \left(
0,1\right) $. Later, D\'{a}vila and Montenegro \cite{Davila2005} proved an
existence result to equation \eqref{P} with $m=1$ and including also a
possible source term $f\left( u\right) $ satisfying a sublinear condition,
i.e., $f\left( u\right) \leq C\left( 1+u\right) $. They proved that the
pointwise gradient estimate: 
\begin{equation}
\left\vert {\nabla u\left( {t,x}\right) }\right\vert \leq C{u^{\frac{{%
1-\beta }}{2}}}\left( {t,x}\right) ,\mbox{ in }\left( {0,\infty }\right)
\times \Omega ,  \label{1.3}
\end{equation}%
plays a crucial role in proving the existence of solutions of \eqref{P}.
Besides, a partial uniqueness result was obtained by the same authors for a
class of solutions with initial data $u_{0}\left( x\right) \geq C\mathrm{dist%
}\left( x,\partial \Omega \right) ^{\mu }$, for $\mu \in \left( 1,2/\left(
1+\beta \right) \right) $ and some constant $C>0$ (see also \cite{Dao2016b}
for a uniqueness result in another class of solutions). The uniqueness of
solutions fails for general bounded nonnegative initial data \cite%
{Winkler2007}.

Concerning the qualitative properties satisfied by the solutions of \eqref{P}%
, one of the more peculiar facts is that the solutions may vanish after a
finite time, even starting with a positive initial data. This phenomenon
occurs by the presence of the singular absorption ${u^{-\beta }}{\chi
_{\left\{ {u>0}\right\} }}$ and can be understood as a generalization of the 
\textit{finite extinction property} which arises for not so singular
absorption terms of the form $u^{q},$ $0<q<1$. Another motivation of the
present paper is to complete the previous work \cite{Diaz2014} in which the
finite speed of propagation and other qualitative properties were proved by
means of some energy methods (see, e.g., \cite{Diaz-Veron}, \cite%
{Antontsev2002}) in the class of \textit{local} weak solutions of the more
general formulation 
\begin{equation*}
\frac{{\partial \psi \left( v\right) }}{{\partial t}}-\mathrm{div}\,\mathbf{A%
}\left( {x,t,v,Dv}\right) +B\left( {x,t,v,Dv}\right) +C\left( {x,t,v}\right)
=f\left( {x,t,v}\right) ,
\end{equation*}%
for a singular absorption term. In that paper \cite{Diaz2014} the existence
of weak solutions was merely assumed (and not proved), so our goal is to
give some answers in this complementary direction. We also point out that,
more specifically, when $m=1$, $\beta \in \left( 0,1\right) $ and we
consider equation \eqref{P} with a sublinear source term $\lambda f\left(
u\right) $, $\lambda >0$, it was shown in \cite{Montenegro2011} that there
is a real number $\lambda _{0}>0$ and a time $t_{0}>0$, such that $%
u_{\lambda }\left( t_{0},x\right) =0,\mbox{ a.e. in }\Omega ,\ \forall
\lambda \in \left( 0,\lambda _{0}\right) $: he called this phenomenon as 
\textit{the complete quenching} (see a more general statement in \cite%
{Galactionov-Vazquez1995} and \cite{Diaz2014}). Other qualitative properties
were studied in \cite{Gilding2003}.

The extension from semilinear to some one-dimensional quasilinear degenerate
equations of the $p$-Laplacian type was considered in \cite{Giacomoni} and 
\cite{Dao2016}. In that one-dimensional case, the formulation was 
\begin{equation}
\left\{ 
\begin{split}
& {\partial _{t}}u-{\partial _{x}}\left( {{{\left\vert {u_{x}}\right\vert }%
^{p-2}}{u_{x}}}\right) +{u^{-\beta }}{\chi _{\left\{ {u>0}\right\} }}=0,%
\mbox{
in }\left( {0,\infty }\right) \times \Omega , \\
& u=0,\mbox{ on }\left( {0,\infty }\right) \times \partial \Omega , \\
& u\left( {0,x}\right) ={u_{0}}\left( x\right) ,\mbox{ in }\Omega ,
\end{split}%
\right.  \label{P1}
\end{equation}%
with $p>2$, $\beta \in \left( 0,1\right) $. To obtain the existence of
solutions of \eqref{P1}, it was proved in \cite{Dao2016} the gradient
estimate: 
\begin{equation}
\left\vert {{u_{x}}\left( {t,x}\right) }\right\vert \leq C{u^{\frac{1-{\beta 
}}{p}}}\left( {t,x}\right) ,\mbox{ in }\left( {0,\infty }\right) \times
\Omega .  \label{1.6}
\end{equation}%
We note that \eqref{1.6} is a generalization of \eqref{1.3} as $p>2$.
Furthermore, it was shown in \cite{Dao2016} that any solution of equation %
\eqref{P1} must vanish after a finite time. A complete quenching result for
equation \eqref{P1} with a source $\lambda f\left( u\right) $ was obtained
by the same authors in \cite{Dao2017}. The extension of the gradient
estimates to the higher dimensional case remains today as an open problem.

As mentioned before, the first result in the literature for the
one-dimensional problem \eqref{P} with a slow diffusion ($m>1$) was due to
Kawohl and Kersner \cite{Kawohl1992a} in 1992. Once again, a suitable
gradient estimate was the key of the proof of the correct treatment of the
problem. They proved that 
\begin{equation}
\left\vert {{{\left( {{u^{\frac{{m+\beta }}{2}}}}\right) }_{x}}}\right\vert
\leq C,  \label{Estimate KK}
\end{equation}%
in the regime in which \textit{the absorption dominates the nonlinear
diffusion}, which corresponds to 
\begin{equation}
1\leq m<2+\beta .  \label{Hypo absorption dominate}
\end{equation}%
Notice that the exponent in estimate \eqref{Estimate KK} may be written also
as $1/\gamma $ with $\gamma :=2/\left( m+\beta \right) $. As a matter of
fact, in \cite{Kawohl1992a} it was also considered the opposite regime in
which \textit{the diffusion dominates over the absorption} ($m\geq 2+\beta $%
) and it was shown that the correct value for the pointwise gradient
estimate is a different value of the exponent $\gamma $ (this time $1/\left(
m-1\right) $). We will not be specially interested in such a case in this
paper but, in any case, see more details in the second part of Lemma \ref%
{lemma2}.

Our $N$-dimensional approach to derive a pointwise gradient estimate of the
type \eqref{Estimate KK} will adapt the classical Bernstein method (see,
e.g. \cite{Aronson1969, Benilan2003, Diaz1987b, Vazquez2007}) with some
ideas introduced by Ph. B\'{e}nilan (see, e.g., \cite{Aronson1979,
Benilan1981, Benilan2003}). In fact, for the special case $N=1$, we will
extend the results of \cite{Kawohl1992a} to unbounded initial data. Our
proof requires two technical additional assumptions: 
\begin{equation}
1\leq m<1+\frac{1}{\sqrt{N-1}},  \label{Hypo m and N}
\end{equation}%
and 
\begin{equation}
\beta \in \left( {{{({m-1-\sqrt{{\Delta _{m,N}}}})}_{+}},m-1+\sqrt{{\Delta
_{m,N}}}}\right) ,\mbox{ with }{\Delta _{m,N}}:=1-\left( {N-1}\right) {%
\left( {m-1}\right) ^{2}}.  \label{Hypo}
\end{equation}%
We think that such auxiliary assumptions arise merely as some limitations of
our technique of proof. The question of how to avoid them (in the framework
in which \textit{the absorption dominates the nonlinear diffusion}, $%
1<m<2+\beta $) remains an open problem for us. Nevertheless, thanks to our
technique of proof we will prove a new gradient information for the case 
\begin{equation}
\beta +m\leq 2,  \label{Hypo beta small}
\end{equation}%
(which applies to the semilinear framework) which seems to be unadvertised
in the previous literature: or the $L^{\infty }$ norm of gradient of $u^{%
\frac{m+\beta }{2}}\left( t\right) $ is smaller than ${\left\Vert {\nabla
u_{0}^{\frac{{m+\beta }}{2}}}\right\Vert _{{L^{\infty }}\left( \Omega
\right) }}$ or if the above norm is strictly smaller than this bound then it
is smaller than an universal constant $C=C\left( m,\beta ,N\right) $,
independent of $\Omega $, then it is always smaller than this constant for $%
t\in \left( 0,+\infty \right) $. Moreover, we will give some concrete
examples proving the optimality of the estimate \eqref{Estimate KK}.

For the existence of solutions we will use a monotone family of regularized
problems and we will pass to the limit thanks to the monotonicity of the
approximation of the singular nonlinear term and the contractive properties
of the semigroup associated to the (unperturbed) nonlinear diffusion over
suitable functional spaces. The pointwise gradient estimates will be
previously obtained for solutions of the regularized problems and then
extended to the solutions of \eqref{P} and \eqref{PC} by passing to the
limit in the regularizing parameters. In the case of the assumption 
\eqref{Hypo beta
small} we will pass to the limit in the gradient term $\nabla u^{m}$ by
means of a generalization of the \textit{almost everywhere gradient
convergence} technique (introduced initially for $p$-Laplace type operators
in \cite{Boccardo1992}). Finally, we will consider several qualitative
properties of solutions of \eqref{P} and \eqref{PC} implying the \textit{%
finite speed of propagation}, the \textit{uniform localization of the support%
}, and the \textit{instantaneous shrinking of the support property}. The
well known results for solutions of the porous media equation with a strong
absorption (see, e.g. \cite{Alvarez-Diaz, Diaz1987b, Kalashnikov,
Vazquez2007}) remain being valid for solutions of the problem \eqref{P}.
Here we will get some sharper estimates rather than to deal with local
solutions as in \cite{Diaz2014}. Our special interest is to analyze the
differences arising among the behavior of solutions of the porous media
equation with a strong absorption and the solutions of the porous media
equation with a singular absorption term ${u^{-\beta }}{\chi _{\left\{ {u>0}%
\right\} }}$. In the case in which the singularity is permanently excluded
of the boundary, such as for the problem \eqref{P(1)}, the behavior of the
solution (its \textquotedblleft profile\textquotedblright ) at the first
time $t=\tau _{0}$ in which there is a quenching point, was studied in \cite%
{Filippas-Guo}. In our formulation \eqref{P}, we know that there is an
permanent singularity on the boundary $\partial \Omega $ and thus our
interest is to describe the profile of the solutions near the boundary $%
\partial \Omega $. We will construct a large class of solutions showing that
their profile near the boundary follow the gradient estimate proved in this
paper. So, such gradient estimates are sharp. Some commentaries on the case $%
\beta \geq m$ will be also given at the end of the paper.

\subsection{Main results}

Let us first introduce the notion of weak solution that we use for the case
of $\Omega $ bounded and bounded initial data.

\begin{definition}
\label{definition1} Let $u_{0}\in L^{\infty }\left( \Omega \right) $, $%
u_{0}\geq 0$. A nonnegative function $u\left( t,x\right) $ is called a \emph{%
weak solution} of \eqref{P} if $u\in \mathcal{C}\left( {\left[ {0,\infty }%
\right) ;{L^{1}}\left( \Omega \right) }\right) \cap {L^{\infty }}\left( {%
\left( {0,\infty }\right) \times \Omega }\right) $, ${u^{-\beta }}{\chi
_{\left\{ {u>0}\right\} }}\in {L^{1}}\left( {\left( {0,T}\right) \times
\Omega }\right) ,$ $u^{m}\in L^{2}\left( 0,T;H_{0}^{1}(\Omega \right) )$ for
any $T>0$, and $u$ satisfy \eqref{P} in the sense of distributions $\mathcal{%
D}^{\prime }\left( \left( 0,\infty \right) \times \Omega \right) $, i.e., 
\begin{equation*}
\int_{0}^{\infty }{\int_{\Omega }{\left( {\ -u{\varphi _{t}}+\nabla {u^{m}}%
\cdot \nabla \varphi +{u^{-\beta }}{\chi _{\left\{ {u>0}\right\} }}\varphi }%
\right) dxdt}}=0,\ \forall \varphi \in \mathcal{C}_{c}^{\infty }\left( {%
\left( {0,\infty }\right) \times \Omega }\right) .
\end{equation*}
\end{definition}

Any weak solution is also a \textit{very weak solution} to equation \eqref{P}
(see e.g., \cite{Aronson1982, Kawohl1992a, Vazquez2007}). Since the reaction
term ${u^{-\beta }}{\chi _{\left\{ {u>0}\right\} }}$ is required to be in $%
L^{1}\left( \left( 0,\infty \right) \times \Omega \right) ,$ a natural
weaker notion of solution will be used sometimes in the paper for the class
of nonnegative initial data which are merely in $L^{1}\left( \Omega \right) $%
:

\begin{definition}
\label{definition2} Let $u_{0}\in L^{1}\left( \Omega \right) $, $u_{0}\geq 0$%
, and $T>0$. A nonnegative function $u\in \mathcal{C}\left( \left[ 0,T\right]
;L^{1}\left( \Omega \right) \right) $ is called a \emph{$L^{1}$-mild solution%
} of \eqref{P} if ${u^{-\beta }}{\chi _{\left\{ {u>0}\right\} }}\in {L^{1}}%
\left( {\left( {0,T}\right) \times \Omega }\right) $ and $u$ coincides with
the unique $L^{1}$-mild solution of the problem 
\begin{equation}
\left\{ 
\begin{split}
& {\partial _{t}}u-\Delta {u^{m}}=f,\mbox{ in }\left( {0,T}\right) \times
\Omega , \\
& u=0,\mbox{ on }\left( {0,T}\right) \times \partial \Omega , \\
& u\left( {0,x}\right) ={u_{0}}\left( x\right) ,\mbox{ in }\Omega ,
\end{split}%
\right.  \label{eq nonhomogeneous}
\end{equation}%
where $f\left( {t,x}\right) :=-{u^{-\beta }}\left( {t,x}\right) {\chi
_{\left\{ {u>0}\right\} }}\left( {t,x}\right) $ on $\left( 0,T\right) \times
\Omega $.
\end{definition}

As a matter of fact, a weaker notion of solutions can be obtained when
introducing \textit{the distance to the boundary} as a weight: ${u_{0}}\in
L_{\delta }^{1}\left( \Omega \right) =\left\{ {v\in L_{\mathrm{loc}%
}^{1}\left( \Omega \right) ;\int_{\Omega }{v\left( x\right) \delta \left(
x\right) dx}<\infty }\right\} $, where 
\begin{equation*}
\delta \left( x\right) =d\left( x,\partial \Omega \right) .
\end{equation*}

\begin{definition}
\label{definition3} Let $u_{0}\in L_{\delta }^{1}\left( \Omega \right) $, $%
u_{0}\geq 0$, and $T>0$. A nonnegative function $u\in \mathcal{C}\left( %
\left[ 0,T\right] ;L_{\delta }^{1}\left( \Omega \right) \right) $ is called
a \emph{$L_{\delta }^{1}$-mild solution} of \eqref{P} if ${u^{-\beta }}{\chi
_{\left\{ {u>0}\right\} }}\in {L^{1}}\left( {0,T;L_{\delta }^{1}\left(
\Omega \right) }\right) $ and $u$ coincides with the unique $L_{\delta }^{1}$%
-mild solution of the problem \eqref{eq nonhomogeneous}, with $f:=-{%
u^{-\beta }}{\chi _{\left\{ {u>0}\right\} }}$.
\end{definition}

We recall that the notion of mild solution of the problem for the
non-homogeneous problem \eqref{eq nonhomogeneous} is well-defined thanks to
the fact that the nonlinear diffusion operator $-\Delta u^{m}$ (with
Dirichlet boundary conditions) is a $m$-accretive operator in $L^{1}\left(
\Omega \right) $ with a dense domain (see, e.g., \cite{Benilan1981,
Benilan1994, Vazquez2007} and their references). The similar properties of
this operator on the space $L_{\delta }^{1}\left( \Omega \right) $ will be
shown in this paper as easy consequences of well-known results (\cite{Brezis
1971, Brezis 1996, Diaz2010, Rakotoson2011, Veron2004} and Section 6.6 of 
\cite{Vazquez2007}). In fact, there are other equivalent formulations for
very weak solutions obtained as $L_{\delta }^{1}$-mild solution of the
problem \eqref{eq
nonhomogeneous}. One formulation which is specially useful for our purposes
starts by introducing the auxiliary equivalent weight function $\zeta \left(
x\right) $, $\zeta \in C^{\infty }\left( \Omega \right) \cap C^{1}\left( 
\overline{\Omega }\right) $, $\zeta >0$, given as the unique solution of the
problem 
\begin{equation}
\left\{ 
\begin{split}
-\Delta {\zeta }& =1,\text{ in }\Omega , \\
{\zeta }& =0,\text{ on }\partial \Omega .
\end{split}%
\right.  \label{Eq Function xi}
\end{equation}%
It is well known that 
\begin{equation}
\underline{C}\delta \left( x\right) \leq \zeta \left( x\right) \leq 
\overline{C}\delta \left( x\right) ,\mbox{ for any }x\in \Omega ,
\label{(1.14)}
\end{equation}%
for some positive constants $\underline{C}<\overline{C}$, so that $L_{\delta
}^{1}\left( \Omega \right) =L_{\zeta }^{1}\left( \Omega \right) $. Then, it
is easy to see that every \textit{$L_{\delta }^{1}$-mild solution} of %
\eqref{P} is a very weak solution of the problem \eqref{eq nonhomogeneous}
in the sense that $u\in \mathcal{C}\left( \left[ 0,T\right] ;L_{\delta
}^{1}\left( \Omega \right) \right) $, $u\geq 0$, $u^{m}\in L^{1}\left(
\left( 0,T\right) \times \Omega \right) $, $f=-{u^{-\beta }}{\chi _{\left\{ {%
u>0}\right\} }}\in {L^{1}}\left( {0,T;L_{\delta }^{1}\left( \Omega \right) }%
\right) $, and for any $t\in \left[ 0,T\right] $, 
\begin{equation*}
\int_{\Omega }{u\left( {t,x}\right) \zeta \left( x\right) dx}+\int_{0}^{t}{%
\int_{\Omega }{{u^{m}}\left( {t,x}\right) dxdt}}=\int_{\Omega }{{u_{0}}%
\left( x\right) \zeta \left( x\right) dx}+\int_{0}^{t}{\int_{\Omega }{%
f\left( {t,x}\right) \zeta \left( x\right) dxdt}}.
\end{equation*}%
In what follows, our main interest will deal with the cases of $N\geq 2$,
and $m>1$ since the two other cases ($N=1$, $m\geq 1$; and $N\geq 1$, $m=1$)
were studied in \cite{Kawohl1992a} and \cite{Phillips1987}, respectively. We
also mention that some singular reaction terms were considered previously in
the literature for the case of $m\in \left( 0,1\right) $ (see, e.g., \cite%
{Dai2006, Deng1992}). Some of our results also hold for $m\in \left(
0,1\right) $ but we will not pursuit such a goal in this paper.

Our main result in this paper is the following one:

\begin{theorem}
\label{theorem1}

\begin{itemize}
\item[i)] Let $u_{0}\in L_{\delta }^{1}\left( \Omega \right) $, $u_{0}\geq 0$%
. Assume $m\geq 1$ and $\beta \in \left( 0,m\right) $. Then, problem %
\eqref{P} has a \emph{maximal} $L_{\delta }^{1}$-mild solution $u$. Moreover
if $u_{0}\in L^{1}\left( \Omega \right) $ then $u$ is also the \emph{maximal}
$L^{1}$-mild solution.

\item[ii)] Let $u_{0}\in L_{\delta }^{1}\left( \Omega \right) $, $u_{0}\geq
0 $ and assume \eqref{Hypo absorption dominate}, \eqref{Hypo m and N}, and %
\eqref{Hypo}. Then 
\begin{equation*}
{\left\Vert {\nabla {u^{\frac{{m+\beta }}{2}}}\left( t\right) }\right\Vert _{%
{L^{\infty }}\left( \Omega \right) }}\leq C\left( {\frac{1}{{t^{\omega }}}+1}%
\right) ,\mbox{ a.e. }t\in \left( {0,+\infty }\right) ,
\end{equation*}%
for some positive constants $\omega =\omega \left( m,\beta ,N\right) $ and $%
C=C\left( m,\beta ,N,\Omega \right) $ if $m>1$, $C=C\left( m,\beta
,N,\left\Vert u_{0}\right\Vert _{L_{\delta }^{1}\left( \Omega \right)
}\right) $ if $m=1$. Moreover the \emph{maximal} $L^{1}$-mild solution is H%
\"{o}lder continuous on $({0,T]}\times \overline{\Omega }$.

\item[iii)] Let $u_{0}\in L_{\delta }^{1}\left( \Omega \right) $, $u_{0}\geq
0$ such that $\nabla u_{0}^{\frac{m+\beta }{2}}\in L^{\infty }\left( \Omega
\right) $ and assume $m\geq 1$, \eqref{Hypo absorption dominate}, %
\eqref{Hypo m and N}, \eqref{Hypo} and \eqref{Hypo beta small}. Then 
\begin{equation*}
{\left\Vert {\nabla {u^{\frac{{m+\beta }}{2}}}\left( t\right) }\right\Vert _{%
{L^{\infty }}\left( \Omega \right) }}\leq \max \left\{ {{{\left\Vert {\nabla
u_{0}^{\frac{{m+\beta }}{2}}}\right\Vert }_{{L^{\infty }}\left( \Omega
\right) }},\frac{{\left( {m+\beta }\right) \sqrt{2+\beta -m}}}{\sqrt{%
2m\left( {{\Delta _{m,N}}-{{\left( {\beta +1-m}\right) }^{2}}}\right) }}}%
\right\} ,
\end{equation*}
\end{itemize}

$\mbox{ a.e. }t\in \left( {0,+\infty }\right) .$
\end{theorem}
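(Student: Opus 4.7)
The plan is to extend the classical Bernstein maximum principle (Aronson--B\'enilan--V\'azquez) to the substitution $v=u^{(m+\beta)/2}$ that was the key to Kawohl--Kersner, exploiting a dimension-reduction observation at interior maxima that is the new ingredient in the multi-dimensional setting. Following the regularization scheme announced in the introduction, approximate \eqref{P} by a family of classical strictly positive solutions $u_\epsilon$ (shifting initial and boundary data by $\epsilon$ and regularizing the singular absorption). Using that with $p=2m/(m+\beta)$ one has $u_\epsilon^m=v_\epsilon^p$ and $u_\epsilon^{-\beta}=v_\epsilon^{p-2}$, a direct computation rewrites the equation as
\begin{equation*}
\partial_t v_\epsilon \;=\; m\,v_\epsilon^\sigma \Delta v_\epsilon \;+\; \frac{m(m-\beta)}{m+\beta}\,v_\epsilon^{\sigma-1}|\nabla v_\epsilon|^2 \;-\; \frac{m+\beta}{2}\,v_\epsilon^{\sigma-1}, \qquad \sigma:=\frac{2(m-1)}{m+\beta}.
\end{equation*}

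Next I would run the Bernstein computation for $z_\epsilon:=|\nabla v_\epsilon|^2$: differentiating the above in $x_j$, multiplying by $2v_{\epsilon,x_j}$ and summing gives a parabolic equation whose right-hand side, at an interior maximum of $z_\epsilon$, equals
\begin{equation*}
2m\sigma\,v_\epsilon^{\sigma-1} z_\epsilon\,\Delta v_\epsilon \;-\; 2m\,v_\epsilon^\sigma\,|D^2 v_\epsilon|^2 \;+\; 2B(\sigma-1)\,v_\epsilon^{\sigma-2} z_\epsilon^2 \;+\; (2+\beta-m)\,v_\epsilon^{\sigma-2}z_\epsilon,
\end{equation*}
with $B=m(m-\beta)/(m+\beta)$. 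The crucial multi-dimensional step is the following dimension drop: at any interior maximum of $z_\epsilon$ one has $\nabla z_\epsilon=0$, which forces $D^2 v_\epsilon\cdot\nabla v_\epsilon=0$, so $\nabla v_\epsilon$ is a null eigenvector of $D^2 v_\epsilon$ and Cauchy--Schwarz sharpens to $|D^2 v_\epsilon|^2\ge (\Delta v_\epsilon)^2/(N-1)$ instead of the standard $/N$. Treating $\Delta v_\epsilon$ as a free parameter and optimizing the resulting concave quadratic, then using the algebraic identity $(m-\beta)(2+\beta-m)=1-(\beta+1-m)^2$, collapses the inequality to
\begin{equation*}
\partial_t z_\epsilon \;\leq\; v_\epsilon^{\sigma-2}\Bigl\{-\frac{2m\bigl[\Delta_{m,N}-(\beta+1-m)^2\bigr]}{(m+\beta)^2}\,z_\epsilon^2 \;+\;(2+\beta-m)\,z_\epsilon\Bigr\}.
\end{equation*}
Hypothesis \eqref{Hypo} makes the quadratic coefficient strictly negative and \eqref{Hypo absorption dominate} makes the linear coefficient strictly positive.

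The conclusion is a Hamilton-style ODE argument for $M_\epsilon(t):=\|z_\epsilon(t,\cdot)\|_{L^\infty(\Omega)}$: whenever the interior maximum exceeds $C_\star^2:=(2+\beta-m)(m+\beta)^2 / \bigl(2m[\Delta_{m,N}-(\beta+1-m)^2]\bigr)$, the right-hand side above is strictly negative, so $M_\epsilon$ can never grow past $\max\{M_\epsilon(0),\,C_\star^2\}$; taking square roots yields precisely the universal constant in the statement. Two technical points must be addressed: spatial maxima attained on $\partial\Omega$, which I would handle either by choosing the regularization so that $v_\epsilon|_{\partial\Omega}$ is constant (ruling out strict boundary maxima via Hopf's lemma) or by exhausting $\Omega$ by interior subdomains; and the limit $\epsilon\to 0$, where the hypothesis $m+\beta\le 2$ is invoked exactly as announced in the introduction, enabling the generalized a.e.\ gradient convergence of Boccardo--Murat type that is needed to identify $\nabla u^m$ and preserve the uniform pointwise bound in the limit. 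The interior Bernstein computation is algebraic and clean; the main obstacle I expect is the combination of this boundary-maximum issue with the compactness argument required for the limit passage.
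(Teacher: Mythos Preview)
Your Bernstein computation for part iii) is correct and yields exactly the paper's constant. The paper proceeds the same way: set $h=u^{(m+\beta)/2}$, look at a maximum of $|\nabla h|^2$, and control the bad term $|\nabla h|^2\Delta h$ against $|D^2h|^2$ to produce the factor $(N-1)(m-1)^2$ that combines with the first-order coefficient into $\Delta_{m,N}-(\beta+1-m)^2$. The one stylistic difference is how that $(N-1)$ arises. You use the rank-drop observation ``$D^2v\cdot\nabla v=0$ at the max, hence $|D^2v|^2\ge(\Delta v)^2/(N-1)$'' followed by a quadratic optimisation in $\Delta v$; the paper instead invokes a B\'enilan-type completion-of-squares identity (its Lemma~1) that gives, pointwise and without assuming $\nabla z=0$,
\[
g(v)\,|D^2v|^2+g'(v)\Bigl(\tfrac12\nabla v\cdot\nabla|\nabla v|^2-|\nabla v|^2\Delta v\Bigr)\ \ge\ -\frac{(N-1)\,g'(v)^2}{4g(v)}\,|\nabla v|^4,
\]
applied with $g(s)=s^{2(m-1)/(m+\beta)}$. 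At the maximum the extra gradient term vanishes and the two approaches coincide literally; your route is a touch more elementary, theirs is slightly more robust because it does not require being exactly at a critical point of $z$.

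Two substantive omissions. First, your proposal treats only iii). Part ii) in the paper is obtained by the \emph{same} Bernstein computation but with a time cut-off $\zeta(t)$ supported away from $t=0$: at the max of $\zeta|\nabla h|^2$ the extra term $\tfrac12\zeta'|\nabla h|^2$ supplies the $t^{-1}\|u_0\|_\infty^{1+\beta}$ piece, and then smoothing effects $\|u(t)\|_\infty\le C t^{-\alpha}$ for the porous-medium supersolution (universal when $m>1$, $L^1_\delta$-dependent when $m=1$) convert this into the stated $t^{-\omega}$ bound for unbounded data. Second, part i) needs more than ``regularize'': the paper builds a monotone family $u_\varepsilon$ (truncated absorption), shows the limit exists in $\mathcal C([0,T];L^1_\delta)$ by establishing $m$-$T$-accretivity of $-\Delta(|u|^{m-1}u)$ on $L^1_\delta(\Omega)$ (this is the nontrivial functional-analytic step), and then identifies the limit of $g_\varepsilon(u_\varepsilon)$ with $u^{-\beta}\chi_{\{u>0\}}$ via a very-weak-solution argument. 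Your sketch does not touch this, and it is where most of the work for i) lies.

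On the boundary-maximum issue you flag: the paper's regularization takes $u_{\varepsilon,\eta}\equiv\eta$ on $\partial\Omega$ with $u_0\in\mathcal C_c^1(\Omega)$, and simply asserts $|\nabla h_{\varepsilon,\eta}|^2=0$ on $\partial\Omega$; this disposes of the tangential part but not obviously the normal derivative, so your instinct that this point deserves care is well placed. Your proposed fix (exhaust by interior subdomains, or pass first through the Cauchy problem on $\mathbb R^N$ as the paper in fact does for Theorem~\ref{theorem3}) is the standard remedy. Finally, for the limit $\varepsilon\downarrow0$ in iii) the paper does exactly what you anticipate: a Boccardo--Murat style a.e.\ gradient-convergence argument using the truncation $T_\delta(u_\varepsilon-u_{\varepsilon'})$, and this is precisely where $m+\beta\le2$ (equivalently $\gamma\ge1$) is used, both to handle the initial datum and to close the estimate.
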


\bigskip

We point out that in the rest of the paper we will denote by $C$ different
positive constants, possibly changing from line to line. Furthermore, any
constant, depending on some parameters will be emphasized by a parentheses
indicating such a dependence: for instance, $C=C\left( m,\beta ,N\right) $
will mean that $C$ depends only on $m$, $\beta $, $N$.

\bigskip

\begin{remark}
Concerning the one-dimensional quasilinear case, $m>1$, Theorem \ref%
{theorem1} extends the results by Kawohl and Kersner \cite{Kawohl1992a} to a
class of more general initial data. Notice also that the gradient estimate
given by in part iii) is new with respect to the paper \cite{Kawohl1992a}
and also with respect to the literature on the semilinear problem. It can be
useful for many different purposes (for instance to control possible
approximating algorithms when there are some additional perturbations in the
right hand side of the equation, and so on).
\end{remark}

\begin{remark}
We emphasize that the gradient estimates prove (see Proposition \ref%
{proposition1} below) that in fact $u^{\frac{m+1}{2}}$ is H\"{o}lder
continuous on $\left( 0,\infty \right) \times \overline{\Omega }$ (and in
fact also on $[0,\infty )\times \overline{\Omega }$ provided that $u_{0}^{%
\frac{m+1}{2}}$ is also H\"{o}lder continuous on $\overline{\Omega }$ and $%
\nabla u_{0}^{\frac{m+\beta }{2}}\in L^{\infty }\left( \Omega \right) $).
\end{remark}

The existence of solutions to the Cauchy problem \eqref{PC} can be obtained
as a consequence of Theorem \ref{theorem1}. Moreover, the above gradient
estimates hold on $L^{\infty }\left( \mathbb{R}^{N}\right) $ for a.e. $t\in
\left( 0,T\right) $ (see Theorem \ref{theorem2 copy(1)} below).

This paper is organized as follows. In the next section, we will prove the
pointwise gradient estimates of solutions of a regularized version of
equation \eqref{P}. Section 3 is devoted to prove Theorem 1 and its
application to the study of the Cauchy problem \eqref{PC}. Different
qualitative properties will be considered in the final Section 4.

\section{Technical lemmas}

In this section, we will adapt to our framework the classical Bernstein's
technique and some ideas of Ph. B\'{e}nilan and his collaborators, in order
to obtain a gradient estimate of the type $\left\vert \nabla u^{1/\gamma
}\right\vert \leq C$ with $\gamma :=2/\left( m+\beta \right) $. Let $\psi
\in \mathcal{C}^{\infty }\left( \mathbb{R}:\left[ 0,1\right] \right) $ be a
non-decreasing real function such that 
\begin{equation*}
\psi \left( s\right) =\left\{ 
\begin{split}
& 0,\mbox{ if }s\leq 1, \\
& 1,\mbox{ if }s\geq 2.
\end{split}%
\right.
\end{equation*}%
For every $\varepsilon >0$, we define $g_{\varepsilon }\left( s\right)
:=s^{-\beta }\psi _{\varepsilon }\left( s\right) $, where $\psi
_{\varepsilon }\left( s\right) =\psi \left( s/\varepsilon \right) $, for $%
s\in \mathbb{R}$. It is straightforward to check that $g_{\varepsilon }$ is
a globally Lipschitz function for any $\varepsilon >0$.

Now, for every $\varepsilon >0$ and $\eta >0$, we consider the regularized
version of problem \eqref{P} given by

\begin{equation*}
(P_{\varepsilon ,\eta })=\left\{ 
\begin{split}
& {\partial _{t}}u-\Delta {u^{m}}+{g_{\varepsilon }}\left( u\right) =0,%
\mbox{ in }\left( {0,\infty }\right) \times \Omega , \\
& u=\eta ,\mbox{ on }\left( {0,\infty }\right) \times \partial \Omega , \\
& u\left( {0,x}\right) ={u_{0}}\left( x\right) +\eta ,\mbox{ in }\Omega .
\end{split}%
\right.
\end{equation*}

The main goal of this section is to get some pointwise estimates for $\nabla
u_{\varepsilon ,\eta }$ (with $u_{\varepsilon ,\eta }$ the unique solution
of $(P_{\varepsilon ,\eta })$) which will allow to pass to the limit, as $%
\eta ,\,\varepsilon \downarrow 0$, to prove the gradient estimates indicated
in Theorem \ref{theorem1}.

We start by showing a general auxiliary result which is useful to handle
expressions containing terms of the type $\left\vert \nabla u\right\vert
^{2}\Delta u$ arising in the study of gradient estimates in the
multi-dimensional case. Our proof corresponds to a slight generalization of B%
\'{e}nilan's ideas (see, e.g., \cite{Aronson1979, Benilan1981} and the
application made in \cite{Benachour2016}).

\begin{lemma}
\label{lemma1} Let $u\in \mathcal{C}^{2}(\mathbb{R}^{N},\mathbb{R})$, and $%
g\in \mathcal{C}^{1}\left( \mathbb{R},\left[ 0,\infty \right) \right) $.
Then, the following inequality holds over the set $\left\{ {x\in {\mathbb{R}%
^{N}};g\left( {u\left( x\right) }\right) \neq 0}\right\} $: 
\begin{equation*}
g\left( u\right) {\left\vert {{D^{2}}u}\right\vert ^{2}}+g^{\prime }\left(
u\right) \left( {\frac{1}{2}\nabla u\cdot \nabla \left( {{{\left\vert {%
\nabla u}\right\vert }^{2}}}\right) -{{\left\vert {\nabla u}\right\vert }^{2}%
}\Delta u}\right) \geq -\frac{{\left( {N-1}\right) {{{g^{\prime }\left(
u\right) }}^{2}{\left\vert {\nabla u}\right\vert }^{4}}}}{{4g\left( u\right) 
}}.
\end{equation*}
\end{lemma}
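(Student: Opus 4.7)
The inequality is a Bochner-type (Bernstein) identity, so the plan is to reduce it to a pointwise algebraic statement at an arbitrary $x_0$, and then obtain that algebraic inequality by a rotation argument followed by completing the square.

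First, I would fix a point $x_0$ at which $g(u(x_0))\neq 0$, and separate the trivial case $\nabla u(x_0)=0$ (both sides reduce to manifestly nonnegative quantities, since $g\ge 0$). Assuming $\nabla u(x_0)\neq 0$, I would pick an orthonormal frame at $x_0$ such that $\nabla u(x_0)=|\nabla u(x_0)|\,e_1$. In this frame, only the first partial derivative of $u$ survives, so the two bracketed terms simplify to
\begin{equation*}
\tfrac12\nabla u\cdot\nabla\!\left(|\nabla u|^{2}\right)=\sum_{i,j}u_i u_j u_{ij}=|\nabla u|^{2}u_{11},\qquad |\nabla u|^{2}\Delta u=|\nabla u|^{2}\sum_{i=1}^{N}u_{ii},
\end{equation*}
so that the combination inside the parenthesis multiplying $g'(u)$ equals $-|\nabla u|^{2}\sum_{i=2}^{N}u_{ii}$. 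This is the key cancellation: the $u_{11}$ contribution drops out, and only the transverse Hessian entries remain.

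Next, I would discard nonnegative pieces of $|D^{2}u|^{2}=\sum_{i,j}u_{ij}^{2}$ to keep only the transverse diagonal entries, i.e.\ $|D^{2}u|^{2}\ge\sum_{i=2}^{N}u_{ii}^{2}$. Set $S:=\sum_{i=2}^{N}u_{ii}$ and $\lambda:=|\nabla u|^{2}$. By Cauchy--Schwarz, $\sum_{i=2}^{N}u_{ii}^{2}\ge\frac{S^{2}}{N-1}$ (trivial when $N=1$, as both sides of the desired inequality are zero in that case). The claim therefore reduces to
\begin{equation*}
\frac{g(u)}{N-1}\,S^{2}-g'(u)\,\lambda\,S+\frac{(N-1)\,g'(u)^{2}\,\lambda^{2}}{4\,g(u)}\ge 0.
\end{equation*}

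Finally, since $g(u)>0$ at $x_0$, multiplying by $(N-1)/g(u)$ turns the left-hand side into a perfect square in $S$, namely
\begin{equation*}
\left(S-\frac{(N-1)\,g'(u)\,\lambda}{2\,g(u)}\right)^{\!2}\ge 0,
\end{equation*}
which is obvious. Writing everything back without the adapted frame gives the stated inequality at $x_0$, and since $x_0$ was arbitrary in the set $\{g(u)\neq 0\}$, the lemma follows.

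The only subtle point is the rotation step: one has to verify that the two quantities $g(u)|D^{2}u|^{2}$ and $\nabla u\cdot\nabla(|\nabla u|^{2})-2|\nabla u|^{2}\Delta u$ are invariant under orthogonal changes of coordinates (they are, since they are genuine scalar invariants built from $\nabla u$ and $D^{2}u$), and that the lower bound $|D^{2}u|^{2}\ge\sum_{i\ge 2}u_{ii}^{2}$ is valid \emph{after} this rotation. Apart from this, everything else is algebra; I do not expect any real obstacle.
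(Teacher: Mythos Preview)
Your argument is correct and takes a genuinely different route from the paper. The paper works in the original coordinates: it splits $\mathcal{S}(g,u)$ into diagonal terms $i=j$ and off-diagonal terms $i\neq j$, completes the square \emph{separately} for each summand (obtaining $[\partial_i^2 u + \tfrac{g'}{2g}((\partial_i u)^2-w)]^2$ and $(\partial_{ij}u+\tfrac{g'}{2g}\partial_i u\,\partial_j u)^2$), drops all these squares, and then verifies the algebraic identity
\[
\sum_i\bigl((\partial_i u)^2-w\bigr)^2+\sum_{i\neq j}(\partial_i u)^2(\partial_j u)^2=(N-1)w^2.
\]
Your approach instead exploits rotational invariance at the outset to align $\nabla u$ with $e_1$, which collapses the gradient terms to the single scalar $S=\sum_{i\ge 2}u_{ii}$; then Cauchy--Schwarz plus one completion of the square in $S$ finishes. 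Your proof is arguably cleaner and more geometric---it isolates exactly which part of the Hessian matters (the transverse trace) and why the factor $N-1$ appears (Cauchy--Schwarz over the $N-1$ transverse directions). The paper's version, on the other hand, avoids any appeal to rotational invariance and is purely algebraic, which some readers may find more self-contained. Both lose the same information (the off-diagonal Hessian entries and the $u_{11}$ term), just in different orders.
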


\begin{proof}[Proof of Lemma \ref{lemma1}]
Set $w:= \left|\nabla u\right|^2$ and denote by $\mathcal{S}\left(g,u\right)$ the left-hand side of the wanted inequality. Then $\mathcal{S}\left(g,u\right)$ can be rewritten as
\begin{align*}
\mathcal{S} \left( {g,u} \right) = g\left( u \right){\left| {{D^2}u} \right|^2} + g'\left( u \right)\left( {\frac{1}{2}\nabla u \cdot \nabla w - w\Delta u} \right).
\end{align*}
As in \cite{Benachour2016}, we can adapt the B\'enilan's method presented in \cite{Benilan1981} in the following way:
\begin{align*}
\mathcal{S} \left( {g,u} \right) =&\ g\left( u \right)\sum\limits_{i,j = 1}^N {{{\left( {{\partial _{ij}}u} \right)}^2}}  + g'\left( u \right)\left( {\sum\limits_{i,j = 1}^N {{\partial _i}u{\partial _j}u{\partial _{ij}}u}  - w\sum\limits_{i = 1}^N {\partial _i^2u} } \right)\\
 =&\ g\left( u \right)\sum\limits_{i = 1}^N {\left[ {{{\left( {\partial _i^2u} \right)}^2} + \frac{{g'}}{g}\left( u \right)\left( {{{\left( {{\partial _i}u} \right)}^2} - w} \right)\partial _i^2u} \right]}  + g\left( u \right)\sum\limits_{i \ne j} {\left[ {{{\left( {{\partial _{ij}}u} \right)}^2} + \frac{{g'}}{g}\left( u \right){\partial _i}u{\partial _j}u{\partial _{ij}}u} \right]} \\
 =&\ g\left( u \right)\sum\limits_{i = 1}^N {{{\left[ {\partial _i^2u + \frac{{g'}}{{2g}}\left( u \right)\left( {{{\left( {{\partial _i}u} \right)}^2} - w} \right)} \right]}^2}}  - \frac{{g\left( u \right)}}{4}\sum\limits_{i = 1}^N {{{\left( {\frac{{g'}}{g}} \right)}^2}\left( u \right){{\left( {{{\left( {{\partial _i}u} \right)}^2} - w} \right)}^2}} \\
& + g\left( u \right)\sum\limits_{i \ne j} {{{\left( {{\partial _{ij}}u + \frac{{g'}}{{2g}}\left( u \right){\partial _i}u{\partial _j}u} \right)}^2}}  - \frac{{g\left( u \right)}}{4}\sum\limits_{i \ne j} {{{\left( {\frac{{g'}}{g}} \right)}^2}\left( u \right){{\left( {{\partial _i}u} \right)}^2}{{\left( {{\partial _j}u} \right)}^2}} \\
 \ge & - \frac{{{{\left( {g'} \right)}^2}}}{{4g}}\left( u \right)\left[ {\sum\limits_{i = 1}^N {{{\left( {{{\left( {{\partial _i}u} \right)}^2} - w} \right)}^2}}  + \sum\limits_{i \ne j} {{{\left( {{\partial _i}u} \right)}^2}{{\left( {{\partial _j}u} \right)}^2}} } \right] = - \frac{{\left( {N - 1} \right){{\left( {g'} \right)}^2}}}{{4g}}\left( u \right){w^2} ,
\end{align*}
which completes the proof.
\end{proof}

\bigskip

Given $u_{0}\in \mathcal{C}_{c}^{1}\left( \Omega \right) $, $u_{0}\geq 0$, $%
u_{0}\neq 0$, $m\geq 1$ and $0<\eta \leq \min \left\{ {\varepsilon ,{{%
\left\Vert {u_{0}}\right\Vert }_{\infty }}}\right\} $, the existence and
uniqueness of a classical solution $u_{\varepsilon ,\eta }$ of $%
(P_{\varepsilon ,\eta })$ is a well-known result (see, e.g., \cite%
{Ladyvzenskaja1968}). Moreover, the comparison principle applies and thus 
\begin{equation*}
\eta \leq {u_{\varepsilon ,\eta }}\left( {t,x}\right) \leq {\left\Vert {{%
u_{0}}}\right\Vert _{\infty }}+\eta \leq 2{\left\Vert {{u_{0}}}\right\Vert
_{\infty }},\mbox{ in }\left( {0,\infty }\right) \times \Omega .
\end{equation*}%
We will prove the gradient estimates in a separate way: first for the case $%
N\geq 2$ and then for $N=1$.

\begin{lemma}
\label{lemma2} Let $u_{0}\in \mathcal{C}_{c}^{1}\left( \Omega \right) $ be
nonnegative, $0<\eta \leq \min \left\{ {\varepsilon ,{{\left\Vert {u_{0}}%
\right\Vert }_{\infty }}}\right\} .$ Let $N\geq 2$ and $m\geq 1$ be such
that $\Delta _{m,N}>0$. Define $\gamma :=\frac{2}{m+\beta }$ and assume %
\eqref{Hypo}. Then there is a positive constant $C=C\left( m,\beta ,N\right)$
such that 
\begin{equation}
{\left\vert {\nabla u_{\varepsilon ,\eta }^{1/\gamma }\left( {t,x}\right) }
\right\vert ^{2}}\leq C\left( {{t^{-1}}\left\Vert {u_{0}}\right\Vert _{{%
L^{\infty }}\left( \Omega \right) }^{1+\beta }+1}\right) ,\mbox{ in }\left( {%
0,\infty }\right) \times \Omega .  \label{GE}
\end{equation}%
In addition, if one assumes \eqref{Hypo beta small} and $\nabla
u_0^{1/\gamma} \in L^\infty\left(\Omega\right)$, then 
\begin{equation}  \label{GE1}
\left| {\nabla u_{\varepsilon ,\eta }^{1/\gamma }\left( {t,x} \right)}
\right| \le \max \left\{ {{{\left\| {\nabla u_0^{\frac{{m + \beta }}{2}}}
\right\|}_{{L^\infty }\left( \Omega \right)}},\frac{{\left( {m + \beta }
\right)\sqrt {2 + \beta - m} }}{{\sqrt {2m\left( {{\Delta _{m,N}} - {{\left( 
{\beta + 1 - m} \right)}^2}} \right)} }}} \right\}, \mbox{ in } \left( {%
0,\infty } \right) \times \Omega .
\end{equation}
\end{lemma}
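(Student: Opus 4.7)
The plan is to apply the Bernstein technique to the regularized problem $(P_{\varepsilon,\eta})$, following Kawohl-Kersner's one-dimensional strategy and using Bénilan's multi-dimensional correction provided by Lemma \ref{lemma1}. Since $u_{\varepsilon,\eta}$ is a classical solution bounded between $\eta$ and $2\|u_0\|_\infty$, the substitution $v := u_{\varepsilon,\eta}^{1/\gamma}$ with $1/\gamma = (m+\beta)/2$ is smooth. I would rewrite the equation for $u_{\varepsilon,\eta}$ as a quasilinear equation for $v$ of the form $\partial_t v = A(v)\Delta v + B(v)|\nabla v|^2 - \tilde g_\varepsilon(v)$ with smooth coefficients $A, B$ depending only on $v$; the absorption $\tilde g_\varepsilon$ inherits monotonicity properties from $g_\varepsilon$.

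Setting $w := |\nabla v|^2$ and differentiating the $v$-equation, one obtains a parabolic inequality for $w$ whose problematic ingredients are the term $-2A(v)|D^2v|^2$ (good sign) together with a contribution $-cw\Delta v$ whose sign is indefinite. The decisive step is to apply Lemma \ref{lemma1} with $g$ chosen so that the combination $g(v)|D^2v|^2 + g'(v)(\tfrac12\nabla v\cdot\nabla w - w\Delta v)$ matches exactly the terms arising; this converts the indefinite piece into a multiple of $w^2$ at the cost of the Bénilan correction $-(N-1)(g')^2 w^2/(4g)$. The absorption $\tilde g_\varepsilon$ produces a competing negative multiple of $w^2$, and the hypotheses \eqref{Hypo m and N}--\eqref{Hypo} are calibrated precisely so that the latter dominates the former; the algebraic identity here is $\Delta_{m,N} > (\beta+1-m)^2$, which is exactly the positivity of the discriminant needed for the net coefficient to be strictly negative. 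After routine manipulations one arrives at
\[
\partial_t w - A(v)\Delta w - \langle \mathbf{b}(v,\nabla v), \nabla w\rangle \;\le\; C_1 w - C_2 w^2,
\]
with $C_2 = C_2(m,\beta,N) > 0$.

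For the estimate \eqref{GE}, I would use an Aronson-Bénilan-type supersolution $\varphi(t) = K(t^{-\omega}+1)$ with $\omega = \omega(m,\beta,N) > 0$: for $K$ sufficiently large, $\varphi$ is a supersolution of the above inequality and blows up at $t=0$, dominating any initial value of $w$; the comparison principle, with boundary control on $(0,T)\times\partial\Omega$ supplied by the classical regularity of $u_{\varepsilon,\eta}$ and standard barriers built from the function $\zeta$ of \eqref{Eq Function xi}, yields \eqref{GE}. For \eqref{GE1}, the extra hypothesis \eqref{Hypo beta small} (equivalently $\gamma \ge 1$) lets one rewrite the inequality as $\partial_t w - A(v)\Delta w - \langle\mathbf{b},\nabla w\rangle \le -C_2 w(w - K^2)$ with $K^2$ equal to the square of the constant on the right-hand side of \eqref{GE1}; then any interior maximum of $w$ is bounded by $K^2$, so by the maximum principle $w$ is dominated by $\max\{\|\nabla u_0^{1/\gamma}\|_\infty^2, K^2\}$ throughout $(0,\infty)\times\Omega$.

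The main technical obstacle will be two-fold. First, extracting the sharp algebraic constants from the Bernstein computation so that \eqref{Hypo m and N}--\eqref{Hypo} are used exactly at the threshold $\Delta_{m,N} > (\beta+1-m)^2$, guaranteeing $C_2 > 0$ and producing the explicit constant in \eqref{GE1}; this requires a careful bookkeeping of how the $v$-coefficients $A(v)$, $B(v)$, and their derivatives interact with $g_\varepsilon$ when all are expressed as powers of $v$. Second, verifying that the resulting estimates are uniform in $\varepsilon$ and $\eta$ on the parabolic boundary: at $\partial\Omega$ one has $v = \eta^{1/\gamma}$, so $v$ is close to the singularity of $\tilde g_\varepsilon$, and one must use the smoothness of $u_{\varepsilon,\eta}$ together with barriers to prevent $w$ from blowing up there before the Bernstein argument can propagate the interior bound to the whole domain.
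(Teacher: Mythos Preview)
Your overall strategy---Bernstein substitution $v=u_{\varepsilon,\eta}^{1/\gamma}$, derive an equation for $w=|\nabla v|^2$, invoke Lemma~\ref{lemma1} to control the indefinite $w\Delta v$ term, and exploit that \eqref{Hypo} makes the net quartic coefficient strictly positive---is the right one and matches the paper. But your account of the mechanism is off in a way that matters. The dissipative $w^2$ term does \emph{not} come from the absorption $\tilde g_\varepsilon$: it comes from differentiating the first-order diffusion term $B(v)|\nabla v|^2$, which yields $B'(v)w^2$. The B\'enilan correction from Lemma~\ref{lemma1} subtracts from this same $w^2$ coefficient, and the condition $\Delta_{m,N}>(\beta+1-m)^2$ is exactly what keeps the \emph{net diffusion} coefficient $\mathcal{B}=m[(m\gamma-1)(1-\gamma(m-1))-\tfrac14\gamma^2(N-1)(m-1)^2]$ positive. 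The absorption contributes only \emph{linear} terms in $w$, of the form $(\text{const})\,h^{-\gamma(1+\beta)}w$; the special choice $\gamma=2/(m+\beta)$ is what makes $h^{2-\gamma(m+\beta)}=1$ so that, after normalization, this linear contribution reduces to a pure constant and yields the ``$+1$'' in \eqref{GE}. Your differential inequality $\partial_t w-\ldots\le C_1 w-C_2 w^2$ with constants $C_1,C_2$ therefore cannot hold as stated: the coefficients carry factors $h^{\gamma(m-1)-2}$ and $h^{-\gamma(1+\beta)}$, and it is only at the maximum point (or after multiplying through by the right power of $h$) that the balance collapses to constants.

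The paper avoids this by a direct maximum-point argument with a time cut-off: set $V=\zeta(t)w$ with $\zeta$ supported in $(\tau/2,T+\tau/2)$ and equal to $1$ on $[\tau,T]$; at the interior maximum of $V$ one has $\nabla w=0$, so Lemma~\ref{lemma1} applies cleanly, and the inequality $\partial_t V-mh^{\gamma(m-1)}\Delta V\ge 0$ together with $|\zeta'|\le c_0/\tau$ delivers \eqref{GE} in one stroke, without ever writing a global parabolic inequality for $w$ or constructing a time-dependent supersolution. For \eqref{GE1} the paper replaces $\zeta$ by a cut-off that equals $1$ on $[0,\tau]$ and is nonincreasing, so that $\zeta'\le 0$ kills the time-derivative term and the maximum is either at $t=0$ (giving the initial-gradient bound) or interior (giving the universal constant). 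Your supersolution route can be made to work, but only after you correctly identify the $h$-dependence of the coefficients; the paper's cut-off technique is simpler because it needs the algebra only at a single point. Finally, your boundary concern is legitimate, but the paper does not use barriers built from~\eqref{Eq Function xi}; it argues that the maximum of $V$ is interior, relying on $u_{\varepsilon,\eta}\equiv\eta$ near $\partial\Omega$ (recall $u_0\in\mathcal{C}^1_c(\Omega)$).
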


\begin{proof}
Let $h_{\varepsilon ,\eta }:=u_{\varepsilon ,\eta
}^{1/\gamma }$. Then, $h_{\varepsilon ,\eta }$ satisfies the following
equation:
\begin{equation}
{\partial _{t}}h_{\varepsilon ,\eta }-mh_{\varepsilon ,\eta }^{\gamma \left(
{m-1}\right) }\Delta {h_{\varepsilon ,\eta }}-m\left( {m\gamma -1}\right)
h_{\varepsilon ,\eta }^{\gamma \left( {m-1}\right) -1}{\left\vert {\nabla {%
h_{\varepsilon ,\eta }}}\right\vert ^{2}}+{\gamma ^{-1}}{\psi _{\varepsilon }%
}\left( {h_{\varepsilon ,\eta }^{\gamma }}\right) h_{\varepsilon ,\eta
}^{1-\gamma \left( {1+\beta }\right) }=0.  \label{h}
\end{equation}%
Differentiating in \eqref{h} with respect to the variable $x$, we obtain
\begin{align}
{\partial _{t}}\nabla {h_{\varepsilon ,\eta }}& -mh_{\varepsilon ,\eta
}^{\gamma \left( {m-1}\right) }\nabla \Delta {h_{\varepsilon ,\eta }}%
=m\gamma \left( {m-1}\right) h_{\varepsilon ,\eta }^{\gamma \left( {m-1}%
\right) -1}\Delta {h_{\varepsilon ,\eta }}\nabla {h_{\varepsilon ,\eta }}
\notag \\
& +m\left( {m\gamma -1}\right) \left( {\gamma \left( {m-1}\right) -1}\right)
h_{\varepsilon ,\eta }^{\gamma \left( {m-1}\right) -2}{\left\vert {\nabla {%
h_{\varepsilon ,\eta }}}\right\vert ^{2}}\nabla {h_{\varepsilon ,\eta }}
\notag \\
& +m\left( {m\gamma -1}\right) h_{\varepsilon ,\eta }^{\gamma \left( {m-1}%
\right) -1}\nabla \left( {{{\left\vert {\nabla {h_{\varepsilon ,\eta }}}%
\right\vert }^{2}}}\right) -{\psi _{\varepsilon }}^{\prime }\left( {%
h_{\varepsilon ,\eta }^{\gamma }}\right) h_{\varepsilon ,\eta }^{-\beta
\gamma }\nabla {h_{\varepsilon ,\eta }}  \notag \\
& -{\gamma ^{-1}}\left( {1-\gamma \left( {1+\beta }\right) }\right) {\psi
_{\varepsilon }}\left( {h_{\varepsilon ,\eta }^{\gamma }}\right)
h_{\varepsilon ,\eta }^{-\gamma \left( {1+\beta }\right) }\nabla {%
h_{\varepsilon ,\eta }},\mbox{ in }\left( {0,\infty }\right) \times \Omega .
\label{2.6}
\end{align}%
For any $0<\tau <T<\infty $, let $\zeta \in C^{\infty }\left( \mathbb{R}:%
\left[ 0,1\right] \right) $ be a cut-off function such that
\begin{equation*}
\zeta \left( t\right) =\left\{
\begin{split}
& 1,\mbox{ if }t\in \left[ {\tau ,T}\right] , \\
& 0,\mbox{ if }t\notin \left( \frac{\tau }{2},T+\frac{\tau }{2}\right) ,
\end{split}%
\right. \mbox{ and }\left\vert {\zeta '}\right\vert \leq \frac{{{c_{0}}%
}}{\tau }\mbox{ for some positive constant }{c_{0}}.
\end{equation*}%
Consider now the function ${v_{\varepsilon ,\eta }}\left( {t,x}\right)
:=\zeta \left( t\right) {\left\vert {\nabla {h_{\varepsilon ,\eta }}\left( {%
t,x}\right) }\right\vert ^{2}}$. Let $M:=\max_{\left[ 0,\infty \right)
\times \overline{\Omega }}v_{\varepsilon ,\eta }$. It is enough to assume $%
M>0$, otherwise it is clear that $\nabla h_{\varepsilon ,\eta }\equiv 0$,
likewise $\nabla u_{\varepsilon ,\eta }\equiv 0$. Therefore, there is a
point $\left( t_{0},x_{0}\right) \in \left( \tau /2,T+\tau /2\right) \times
\Omega $ such that $v_{\varepsilon ,\eta }\left( t_{0},x_{0}\right) =M$
(since $v_{\varepsilon ,\eta }=0$ on $\left[ 0,\infty \right) \times
\partial \Omega $). As a consequence, one has
\begin{equation}
\nabla \left( {{{\left\vert {\nabla {h_{\varepsilon ,\eta }}}\right\vert }%
^{2}}}\right) =0\mbox{ and }{\partial _{t}}{v_{\varepsilon ,\eta }}%
-mh_{\varepsilon ,\eta }^{\gamma \left( {m-1}\right) }\Delta {v_{\varepsilon
,\eta }}\geq 0,\mbox{ at }\left( {{t_{0}},{x_{0}}}\right) .  \label{2.7}
\end{equation}%
This implies
\begin{equation*}
\zeta ^{\prime }{\left\vert {\nabla {h_{\varepsilon ,\eta }}}\right\vert ^{2}%
}+2\zeta \nabla {h_{\varepsilon ,\eta }}\cdot {\partial _{t}}\nabla {%
h_{\varepsilon ,\eta }}\geq 2m\zeta h_{\varepsilon ,\eta }^{\gamma \left( {%
m-1}\right) }\left( {{{\left\vert {{D^{2}}{h_{\varepsilon ,\eta }}}%
\right\vert }^{2}}+\nabla {h_{\varepsilon ,\eta }}\cdot \nabla \Delta {%
h_{\varepsilon ,\eta }}}\right) ,\mbox{ at }\left( {{t_{0}},{x_{0}}}\right) ,
\end{equation*}%
or, equivalently,
\begin{equation*}
\zeta \nabla {h_{\varepsilon ,\eta }}\cdot \left( {{\partial _{t}}\nabla {%
h_{\varepsilon ,\eta }}-mh_{\varepsilon ,\eta }^{\gamma \left( {m-1}\right)
}\nabla \Delta {h_{\varepsilon ,\eta }}}\right) \geq -\frac{{\zeta ^{\prime }%
}}{2}{\left\vert {\nabla {h_{\varepsilon ,\eta }}}\right\vert ^{2}}+m\zeta
h_{\varepsilon ,\eta }^{\gamma \left( {m-1}\right) }{\left\vert {{D^{2}}{%
h_{\varepsilon ,\eta }}}\right\vert ^{2}},\mbox{ at }\left( {{t_{0}},{x_{0}}}%
\right) .
\end{equation*}%
Combining this with \eqref{2.6} and the former version of \eqref{2.7}, we
obtain
\begin{align}
& m\left( {m\gamma -1}\right) \left( {1-\gamma \left( {m-1}\right) }\right)
\zeta h_{\varepsilon ,\eta }^{\gamma \left( {m-1}\right) -2}{\left\vert {%
\nabla {h_{\varepsilon ,\eta }}}\right\vert ^{4}}  \notag \\
\hspace{0.5cm}\leq & \ \frac{{\zeta ^{\prime }}}{2}{\left\vert {\nabla {%
h_{\varepsilon ,\eta }}}\right\vert ^{2}}+m\gamma \left( {m-1}\right) \zeta
h_{\varepsilon ,\eta }^{\gamma \left( {m-1}\right) -1}\Delta {h_{\varepsilon
,\eta }}{\left\vert {\nabla {h_{\varepsilon ,\eta }}}\right\vert ^{2}}%
-m\zeta h_{\varepsilon ,\eta }^{\gamma \left( {m-1}\right) }{\left\vert {{%
D^{2}}{h_{\varepsilon ,\eta }}}\right\vert ^{2}}  \notag \\
\hspace{0.5cm}& -\zeta {\psi _{\varepsilon }}^{\prime }\left( {%
h_{\varepsilon ,\eta }^{\gamma }}\right) h_{\varepsilon ,\eta }^{-\beta
\gamma }{\left\vert {\nabla {h_{\varepsilon ,\eta }}}\right\vert ^{2}}%
+\left( {1+\beta -{\gamma ^{-1}}}\right) \zeta {\psi _{\varepsilon }}\left( {%
h_{\varepsilon ,\eta }^{\gamma }}\right) h_{\varepsilon ,\eta }^{-\gamma
\left( {1+\beta }\right) }{\left\vert {\nabla {h_{\varepsilon ,\eta }}}%
\right\vert ^{2}},\mbox{ at }\left( {{t_{0}},{x_{0}}}\right) .  \label{2.8}
\end{align}%
From \eqref{2.7}, applying Lemma \ref{lemma1} to $g\left( s\right)
=s^{\gamma \left( m-1\right) }$ we get
\begin{align*}
& h_{\varepsilon ,\eta }^{\gamma \left( {m-1}\right) }{\left\vert {{D^{2}}{%
h_{\varepsilon ,\eta }}}\right\vert ^{2}}-\gamma \left( {m-1}\right)
h_{\varepsilon ,\eta }^{\gamma \left( {m-1}\right) -1}{\left\vert {\nabla {%
h_{\varepsilon ,\eta }}}\right\vert ^{2}}\Delta {h_{\varepsilon ,\eta }} \\
& =h_{\varepsilon ,\eta }^{\gamma \left( {m-1}\right) }{\left\vert {{D^{2}}{%
h_{\varepsilon ,\eta }}}\right\vert ^{2}}+\gamma \left( {m-1}\right)
h_{\varepsilon ,\eta }^{\gamma \left( {m-1}\right) -1}\left( {\frac{1}{2}%
\nabla {h_{\varepsilon ,\eta }}\cdot \nabla \left( {{{\left\vert {\nabla {%
h_{\varepsilon ,\eta }}}\right\vert }^{2}}}\right) -{{\left\vert {\nabla {%
h_{\varepsilon ,\eta }}}\right\vert }^{2}}\Delta {h_{\varepsilon ,\eta }}}%
\right) \\
& \geq -\frac{1}{4}{\gamma ^{2}}\left( {N-1}\right) {\left( {m-1}\right) ^{2}%
}h_{\varepsilon ,\eta }^{\gamma \left( {m-1}\right) -2}{\left\vert {\nabla {%
h_{\varepsilon ,\eta }}}\right\vert ^{4}},\mbox{ at }\left( {{t_{0}},{x_{0}}}%
\right) .
\end{align*}%
A combination of this equality, \eqref{2.8}, and $\nabla h_{\varepsilon
,\eta }\left( t_{0},x_{0}\right) \neq 0$ implies
\begin{align}
& m\left[ {\left( {m\gamma -1}\right) \left( {1-\gamma \left( {m-1}\right) }%
\right) -{\gamma ^{2}}\left( {N-1}\right) {{\left( {m-1}\right) }^{2}}/4}%
\right] \zeta h_{\varepsilon ,\eta }^{\gamma \left( {m-1}\right) -2}{%
\left\vert {\nabla {h_{\varepsilon ,\eta }}}\right\vert ^{2}}  \notag \\
& \hspace{0.5cm}\leq \frac{{\zeta ^{\prime }}}{2}-\zeta {\psi _{\varepsilon }%
}^{\prime }\left( {h_{\varepsilon ,\eta }^{\gamma }}\right) h_{\varepsilon
,\eta }^{-\beta \gamma }+\left( {1+\beta -{\gamma ^{-1}}}\right) \zeta {\psi
_{\varepsilon }}\left( {h_{\varepsilon ,\eta }^{\gamma }}\right)
h_{\varepsilon ,\eta }^{-\gamma \left( {1+\beta }\right) },\mbox{ at }\left(
{{t_{0}},{x_{0}}}\right) .  \label{2.9}
\end{align}%
Denote
\begin{equation*}
\mathcal{B}:=m\left[ {\left( {m\gamma -1}\right) \left( {1-\gamma \left( {m-1%
}\right) }\right) -\frac{1}{4}{\gamma ^{2}}\left( {N-1}\right) {{\left( {m-1}%
\right) }^{2}}}\right] =\frac{{m\left[ {{\Delta _{m,N}}-{{\left( {\beta +1-m}%
\right) }^{2}}}\right] }}{{{{\left( {m+\beta }\right) }^{2}}}}.
\end{equation*}%
Note that the assumption \eqref{Hypo} on $\beta $ implies that $\mathcal{B}%
>0$. Since $\psi _{\varepsilon }^{\prime }\geq 0$, it is clear that the
second term on the right hand side of \eqref{2.9} is non-positive. As a
consequence, we get
\begin{equation*}
\mathcal{B}{v_{\varepsilon ,\eta }}=\mathcal{B}\zeta {\left\vert {\nabla {%
h_{\varepsilon ,\eta }}}\right\vert ^{2}}\leq \frac{{\zeta ^{\prime }}}{2}%
h_{\varepsilon ,\eta }^{2-\gamma \left( {m-1}\right) }+\left( {1+\beta -{%
\gamma ^{-1}}}\right) \zeta {\psi _{\varepsilon }}\left( {h_{\varepsilon
,\eta }^{\gamma }}\right) h_{\varepsilon ,\eta }^{2-\gamma \left( {m+\beta }%
\right) },\mbox{ at }\left( {{t_{0}},{x_{0}}}\right) .
\end{equation*}%
Note that $2-\gamma \left( m-1\right) =2\left( 1+\beta \right) /\left(
m+\beta \right) >0$ and $1+\beta -\gamma ^{-1}=\left( 2+\beta -m\right) /2>0$
(since $\Delta _{m,N}>0$ implies $m<1+1/\sqrt{N-1}\leq 2$ for all $N\geq 2$%
), the last inequality then implies
\begin{equation*}
M\leq \frac{1}{{2\mathcal{B}}}\left[ {\frac{{{c_{0}}}}{\tau }{{\left( {2{{%
\left\Vert {{u_{0}}}\right\Vert }_{\infty }}}\right) }^{1+\beta }}+2+\beta -m%
}\right] .
\end{equation*}%
Since $v_{\varepsilon ,\eta }\left( t,x\right) \leq M$ in $\left( 0,\infty
\right) \times \Omega $, the last inequality implies, in particular, at $%
t=\tau $:
\begin{equation*}
{\left\vert {\nabla u_{\varepsilon ,\eta }^{1/\gamma }\left( {\tau ,x}%
\right) }\right\vert ^{2}}\leq \frac{1}{{2\mathcal{B}}}\left( {{2^{1+\beta }}%
{c_{0}}{\tau ^{-1}}\left\Vert {{u_{0}}}\right\Vert _{\infty }^{1+\beta
}+2+\beta -m}\right) ,\ \forall x\in \Omega .
\end{equation*}%
The proof of the second statement is a small variation of the above case.
For any $\tau >0$, it suffices to make a slight modification by replacing
the cut-off function $\zeta \left( t\right) $ by $\overline{\zeta }\left(
t\right) \in C^{\infty }\left( \mathbb{R}:\left[ 0,1\right] \right) $
defined by
\begin{equation*}
\bar{\zeta}\left( t\right) =\left\{
\begin{split}
& 1,\mbox{ if }t\leq \tau , \\
& 0,\mbox{ if }t\geq 2\tau ,
\end{split}%
\right. \mbox{ and }\bar{\zeta}' \leq 0\mbox{ in } \mathbb{R}.
\end{equation*}
Now, if define $\overline{v}_{\varepsilon ,\eta }:=\overline{\zeta }%
\left\vert \nabla h_{\varepsilon ,\eta }\right\vert ^{2}$ and assume that $%
\overline{v}_{\varepsilon ,\eta }$ attains its maximum at $\left( 0,\bar{x}%
\right) $ for some $\bar{x}\in \Omega $, then we have
\begin{align*}
\overline{\zeta }\left( t\right) {\left\vert {\nabla {h_{\varepsilon ,\eta }}%
\left( {t,x}\right) }\right\vert ^{2}}& ={\overline{v}_{\varepsilon ,\eta }}%
\left( {t,x}\right) \leq {\overline{v}_{\varepsilon ,\eta }}\left( {0,\bar{x}%
}\right) ={\left\vert {\nabla {h_{\varepsilon ,\eta }}\left( {0,\bar{x}}%
\right) }\right\vert ^{2}}=\frac{1}{{{\gamma ^{2}}}}{\left( {{u_{0}}\left( {%
\bar{x}}\right) +\eta }\right) ^{2\left( {\frac{1}{\gamma }-1}\right) }}{%
\left\vert {\nabla {u_{0}}\left( {\bar{x}}\right) }\right\vert ^{2}} \\
& \leq {\left( {\frac{{{u_{0}}\left( {\bar{x}}\right) }}{{{u_{0}}\left( {%
\bar{x}}\right) +\eta }}}\right) ^{2\left( {1-\frac{1}{\gamma }}\right) }}{%
\left\Vert {\nabla u_{0}^{1/\gamma }}\right\Vert _{\infty }}\leq {\left\Vert
{\nabla u_{0}^{1/\gamma }}\right\Vert _{\infty }},
\end{align*}%
where we have used $\gamma \geq 1$ stemming from the additional assumption $%
\beta \leq 2-m$. Thus
\begin{equation*}
\left\vert {\nabla u_{\varepsilon ,\eta }^{1/\gamma }}\right\vert \leq {%
\left\Vert {\nabla u_{0}^{1/\gamma }}\right\Vert _{\infty }},\mbox{ in } \left(0,\infty\right)\times \Omega.
\end{equation*}%
Otherwise, $\overline{v}_{\varepsilon ,\eta }$ must attain its
maximum at some $\left( \bar{t}_{0},\bar{x}_{0}\right) \in \left( 0,2\tau
\right) \times \Omega $ since $\overline{v}_{\varepsilon ,\eta }=0$ on $%
\left\{ {\left( {2\tau ,\infty }\right) \times \Omega }\right\} \cup \left\{
{\left( {0,\infty }\right) \times \partial \Omega }\right\} $. Then,
repeating the proof of the first statement until \eqref{2.9}, and from the
fact that $\overline{\zeta }^{\prime }\leq 0$, we deduce
\begin{equation*}
\mathcal{B}{\overline{v}_{\varepsilon ,\eta }}=\mathcal{B}\overline{\zeta }{%
\left\vert {\nabla {h_{\varepsilon ,\eta }}}\right\vert ^{2}}\leq \left( {%
1+\beta -{\gamma ^{-1}}}\right) \overline{\zeta }{\psi _{\varepsilon }}%
\left( {h_{\varepsilon ,\eta }^{\gamma }}\right) ,\mbox{ at }\left( {{{\bar{t%
}}_{0}},{{\bar{x}}_{0}}}\right) .
\end{equation*}%
By the same argument, this leads us to
\begin{equation*}
\left\vert {\nabla u_{\varepsilon ,\eta }^{1/\gamma }(t,x)}\right\vert \leq {%
\left( {\frac{{2+\beta -m}}{{2\mathcal{B}}}}\right) ^{\frac{1}{2}}}, \mbox{ in } \left(0,\infty\right)\times \Omega.
\end{equation*}%
Then, combining both estimates we arrive to the conclusion.
\end{proof}
Now we will consider the one-dimensional case to prove similar gradient
estimates to the ones obtained in the above result. Moreover, we will get
also a gradient estimate for the case in which the diffusion dominates over
the absorption (similar to the one given in \cite{Kawohl1992b}).

\begin{lemma}
\label{lemma3} Let $N=1$, $m\geq 1$, $\beta \in \left( 0,m\right)$. Consider 
$u_{0}\in \mathcal{C}_{c}^{1}\left( \Omega \right)$, $u_{0}\geq 0$, $%
u_{0}\neq 0$ and $0<\eta \leq \min \left\{ {\varepsilon ,{{\left\Vert {u_{0}}%
\right\Vert }_{\infty }}}\right\}$. Then

\begin{itemize}
\item[i)] if $m<\beta +2$, there is a constant $C=C\left( m,\beta \right) $
such that 
\begin{equation*}
{\left\vert {\left( {u_{\varepsilon ,\eta }^{1/\gamma }}\right) _{x}\left( {%
t,x}\right) }\right\vert ^{2}}\leq C\left( {{t^{-1}}\left\Vert {u_{0}}%
\right\Vert _{{L^{\infty }}\left( \Omega \right) }^{1+\beta }+1}\right) ,%
\mbox{ in }\left( {0,\infty }\right) \times \Omega .
\end{equation*}%
In addition, if we assume \eqref{Hypo beta small} and ${\left( {{%
u_{0}^{1/\gamma }}}\right) ^{\prime }\in L^{\infty }}\left( \Omega \right) $%
, we get 
\begin{equation*}
\left\vert {\left( {u_{\varepsilon ,\eta }^{1/\gamma }}\right) }%
_{x}(t,x)\right\vert \leq \max \left\{ {\left\Vert \left( {{u_{0}^{1/\gamma }%
}}\right) ^{\prime }\right\Vert _{{L^{\infty }}\left( \Omega \right) },\frac{%
{m+\beta }}{\sqrt{2m\left( {m-\beta }\right) }}}\right\} ,\mbox{ in }\left[
0,\infty \right) \times \Omega .
\end{equation*}

\item[ii)] If $m\geq \beta +2$, then there is a constant $C=C\left( m\right) 
$ such that 
\begin{equation*}
{\left\vert {\left( {u_{\varepsilon ,\eta }^{m-1}}\right) _{x}\left( {t,x}%
\right) }\right\vert ^{2}}\leq C{t^{-1}}\left\Vert {u_{0}}\right\Vert _{{%
L^{\infty }}\left( \Omega \right) }^{m-1},\mbox{ in }\left( {0,\infty }%
\right) \times \Omega .
\end{equation*}
\end{itemize}
\end{lemma}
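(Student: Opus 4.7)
The plan is to adapt the Bernstein-type argument used in Lemma \ref{lemma2} to the one-dimensional setting, where a crucial simplification occurs because the right-hand side of Lemma \ref{lemma1} carries the factor $(N-1)$ and therefore vanishes identically. For part (i), I would set $h_{\varepsilon,\eta} := u_{\varepsilon,\eta}^{1/\gamma}$ with $\gamma = 2/(m+\beta)$, rewrite the 1D analogue of \eqref{h} and differentiate in $x$ to obtain a version of \eqref{2.6}. Then I would introduce the cut-off $\zeta(t)$ supported in $[\tau/2,T+\tau/2]$ with $\zeta\equiv 1$ on $[\tau,T]$ and study $v_{\varepsilon,\eta}(t,x):=\zeta(t)\,(h_{\varepsilon,\eta,x}(t,x))^2$. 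At an interior maximum point $(t_0,x_0)$ one has $(h_x^2)_x = 0$ and $\partial_t v_{\varepsilon,\eta} - m\,h_{\varepsilon,\eta}^{\gamma(m-1)}\,v_{\varepsilon,\eta,xx}\ge 0$, so multiplying \eqref{2.6} by $h_{\varepsilon,\eta,x}$ one is led, exactly as in Lemma \ref{lemma2}, to the analogue of \eqref{2.8}.

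The simplification in 1D is that $|D^2 h|^2 = h_{xx}^2$ and Lemma \ref{lemma1} with $N=1$ yields the sharp inequality
\[
h^{\gamma(m-1)}h_{xx}^{2} + \gamma(m-1)\,h^{\gamma(m-1)-1}\!\left(\tfrac{1}{2}h_x (h_x^2)_x - h_x^2 h_{xx}\right)\ge 0,
\]
so the auxiliary constant becomes
\[
\mathcal B_{1\mathrm D}\;=\;m\,(m\gamma-1)\bigl(1-\gamma(m-1)\bigr)\;=\;\frac{m(m-\beta)(2+\beta-m)}{(m+\beta)^{2}},
\]
which is strictly positive precisely under $\beta\in(0,m)$ and $m<2+\beta$. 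In particular, no auxiliary bounds of the form \eqref{Hypo m and N}--\eqref{Hypo} are needed. The argument then concludes as in Lemma \ref{lemma2}, yielding the first bound after evaluating at $t=\tau$, with constant $C=C(m,\beta)$.

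For the second statement in part (i), under the extra hypothesis $m+\beta\le 2$ one has $\gamma\ge 1$, so $\|(u_{\varepsilon,\eta}^{1/\gamma})_x(0,\cdot)\|_\infty\le\|(u_0^{1/\gamma})'\|_\infty$. I would then repeat the argument with the one-sided cut-off $\bar\zeta$ of the second half of the proof of Lemma \ref{lemma2}, splitting according to whether $\bar v_{\varepsilon,\eta}:=\bar\zeta\,(h_{\varepsilon,\eta,x})^2$ attains its maximum on $\{t=0\}$ (controlled by $\|(u_0^{1/\gamma})'\|_\infty$) or at some interior point of $(0,2\tau)\times\Omega$ (yielding, via $\bar\zeta'\le 0$, the universal constant $\sqrt{(2+\beta-m)/(2\mathcal B_{1\mathrm D})} = (m+\beta)/\sqrt{2m(m-\beta)}$).

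For part (ii), when the diffusion dominates ($m\ge 2+\beta$), the natural exponent is $1/\gamma=m-1$, which turns $h:=u_{\varepsilon,\eta}^{m-1}$ into a function whose equation has linear principal part $m\,h\,h_{xx}$. Repeating the Bernstein computation with this choice, the absorption term contributes a harmless sign under $m\ge 2+\beta$ and the Bernstein coefficient reduces to one depending only on $m$. The main obstacle in both parts is checking that the single coefficient produced by the Bernstein-type manipulations has the correct sign; in 1D this reduces to the clean algebraic dichotomy $m<2+\beta$ (for (i)) versus $m\ge 2+\beta$ (for (ii)), which is precisely the hypothesis separation of the two statements.
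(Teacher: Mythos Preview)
Your plan for part (i) is essentially the paper's own argument and is correct: in $N=1$ the factor $(N-1)$ in Lemma~\ref{lemma1} vanishes (equivalently, $h_{xx}=0$ at the interior maximum since $(h_x^2)_x=2h_xh_{xx}=0$), and the Bernstein coefficient $\mathcal B_{1\mathrm D}=m(m\gamma-1)(1-\gamma(m-1))=m(m-\beta)(2+\beta-m)/(m+\beta)^2$ is positive exactly when $\beta\in(0,m)$ and $m<2+\beta$. The second statement in (i) follows by the same $\bar\zeta$ trick.

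Your plan for part (ii), however, has a genuine gap. With $\bar\gamma=1/(m-1)$ one has $\bar\gamma(m-1)=1$, so the Bernstein coefficient
\[
m(m\bar\gamma-1)\bigl(1-\bar\gamma(m-1)\bigr)=m\cdot\frac{1}{m-1}\cdot 0=0,
\]
and the quartic-in-$h_x$ term disappears entirely: the direct Bernstein argument with $h=u_{\varepsilon,\eta}^{m-1}$ yields no coercivity, contrary to your claim that ``the Bernstein coefficient reduces to one depending only on $m$''. This is precisely why the case $m\ge 2+\beta$ requires a different device. The paper follows the Aronson trick: one composes with the concave function $p(y)=N_0 y(4-y)/3$, $N_0=(2\|u_0\|_\infty)^{m-1}$, setting $v_{\varepsilon,\eta}:=p^{-1}\circ h_{\varepsilon,\eta}$ and $w_{\varepsilon,\eta}:=\zeta(\partial_x v_{\varepsilon,\eta})^2$. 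At the maximum of $w_{\varepsilon,\eta}$ the strict concavity $p''=-2N_0/3$ manufactures the missing coercive term
\[
-m\Bigl(\frac{m}{m-1}p''+p\Bigl(\frac{p''}{p'}\Bigr)'\Bigr)\ge \frac{2m^2N_0}{3(m-1)}>0,
\]
while the absorption contributions carry the right sign because $\beta+2-m\le 0$ and $p''<0$. Converting back via $|h_x|=|p'(v)|\,|v_x|\le (4N_0/3)|v_x|$ gives the stated bound with a constant depending only on $m$.
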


\begin{proof}
i) Repeating the proof of Lemma \ref{lemma2}
until \eqref{2.7} we get%
\begin{equation*}
\partial _{x}^{2}{h_{\varepsilon ,\eta }}=0\text{ and }{\partial _{t}}{%
v_{\varepsilon ,\eta }}-mh_{\varepsilon ,\eta }^{\gamma \left( {m-1}\right)
}\partial _{x}^{2}{v_{\varepsilon ,\eta }}\geq 0,\text{ at }\left( {{t_{0}},{%
x_{0}}}\right) .
\end{equation*}
Then
\begin{equation*}
\zeta {\partial _{x}}{h_{\varepsilon ,\eta }}\left( {{\partial _{tx}}{%
h_{\varepsilon ,\eta }}-mh_{\varepsilon ,\eta }^{\gamma \left( {m-1}\right)
}\partial _{x}^{3}{h_{\varepsilon ,\eta }}}\right) \geq -\frac{\zeta }{2}{%
\left( {{\partial _{x}}{h_{\varepsilon ,\eta }}}\right) ^{2}},\mbox{ at }%
\left( {{t_{0}},{x_{0}}}\right) .
\end{equation*}%
Combining this with the 1D-analogue of \eqref{h} and $\partial _{x}^{2}{%
h_{\varepsilon ,\eta }}\left( {{t_{0}},{x_{0}}}\right) =0$ we obtain
\begin{align*}
& m\left( {m\gamma -1}\right) \left( {1-\gamma \left( {m-1}\right) }\right)
\zeta h_{\varepsilon ,\eta }^{\gamma \left( {m-1}\right) -2}{\left( {{%
\partial _{x}}{h_{\varepsilon ,\eta }}}\right) ^{2}} \\
& \hspace{0.5cm}\leq \frac{{\zeta ^{\prime }}}{2}-\zeta {\psi _{\varepsilon }%
}^{\prime }\left( {h_{\varepsilon ,\eta }^{\gamma }}\right) h_{\varepsilon
,\eta }^{-\beta \gamma }+\left( {1+\beta -{\gamma ^{-1}}}\right) \zeta {\psi
_{\varepsilon }}\left( {h_{\varepsilon ,\eta }^{\gamma }}\right)
h_{\varepsilon ,\eta }^{-\gamma \left( {1+\beta }\right) },\mbox{ at }\left(
{{t_{0}},{x_{0}}}\right) .
\end{align*}%
Using the same argument, we arrive at the desired estimate.

ii) Let now $\overline{\gamma }:=1/\left( m-1\right) $ and define
$h_{\varepsilon ,\eta }:=u_{\varepsilon ,\eta }^{1/\overline{\gamma }}$.
Then, $h_{\varepsilon ,\eta }$ satisfies
\begin{equation*}
{\partial _{t}}{h_{\varepsilon ,\eta }}-m{h_{\varepsilon ,\eta }}{\partial
_{x}}{h_{\varepsilon ,\eta }}-\frac{m}{{m-1}}{\left( {{\partial _{x}}{%
h_{\varepsilon ,\eta }}}\right) ^{2}}+\left( {m-1}\right) {\psi
_{\varepsilon }}\left( {h_{\varepsilon ,\eta }^{\overline{\gamma }}}\right)
h_{\varepsilon ,\eta }^{1-\overline{\gamma }\left( {1+\beta }\right) }=0.
\end{equation*}%
As in \cite{Aronson1969} (see also \cite{Kawohl1992b} and \cite{Diaz1987b}),
we consider the auxiliary function $p\left( y\right) =N_{0}y\left(
4-y\right) /3$, for all $y\in \left[ 0,1\right] $, where ${N_{0}}:={\left( {2%
{{\left\Vert {{u_{0}}}\right\Vert }_{\infty }}}\right) ^{m-1}}$. Note that $%
p $ is invertible and
\begin{equation*}
p\in \left[ 0,N_{0}\right] ,\,p^{\prime }\in \left[ \frac{2N_{0}}{3},\frac{%
4N_{0}}{3}\right] ,\,p^{\prime \prime }=-\frac{2{N_{0}}}{3},\,\left( \frac{%
p^{\prime \prime }}{p^{\prime }}\right) ^{\prime }\leq -\frac{1}{4},%
\mbox{
in }\left[ {0,1}\right] .
\end{equation*}%
Its inverse function is given by ${p^{-1}}\left( z\right) =2-{\left( {%
4-3z/N_{0}}\right) ^{1/2}}$ for all $z\in \left[ 0,N_{0}\right] $. Finally,
\ define ${v_{\varepsilon ,\eta }}:={p^{-1}}\circ {h_{\varepsilon ,\eta }}$. We obtain the following equation, satisfied by $%
v_{\varepsilon ,\eta }$:
\begin{align}
{\partial _{t}}{v_{\varepsilon ,\eta }}& -mp\left( {{v_{\varepsilon ,\eta }}}%
\right) \partial _{x}^{2}{v_{\varepsilon ,\eta }}-\left( {\frac{m}{{m-1}}%
p^{\prime }+mp{{\left( {p^{\prime }}\right) }^{-1}}p^{\prime \prime }}%
\right) \left( {{v_{\varepsilon ,\eta }}}\right) {\left( {{\partial _{x}}{%
v_{\varepsilon ,\eta }}}\right) ^{2}}  \notag \\
& +\left( {m-1}\right) {\psi _{\varepsilon }}\left( {{p^{\overline{\gamma }}}%
}\right) {p^{1-\overline{\gamma }\left( {1+\beta }\right) }}{\left( {%
p^{\prime }}\right) ^{-1}}\left( {{v_{\varepsilon ,\eta }}}\right) =0,%
\mbox{
in }\left( {0,\infty }\right) \times \Omega .  \label{2.12}
\end{align}%
Differentiating in \eqref{2.12} with respect to the variable $x$, we obtain
\begin{align}
{\partial _{tx}}{v_{\varepsilon ,\eta }}-mp\left( {{v_{\varepsilon ,\eta }}}%
\right) \partial _{x}^{3}{v_{\varepsilon ,\eta }}=& \ mp^{\prime }\left( {{%
v_{\varepsilon ,\eta }}}\right) {\partial _{x}}{v_{\varepsilon ,\eta }}%
\partial _{x}^{2}{v_{\varepsilon ,\eta }}+\left( {\frac{m}{{m-1}}p^{\prime
}+mp{{\left( {p^{\prime }}\right) }^{-1}}p^{\prime \prime }}\right) ^{\prime
}\left( {{v_{\varepsilon ,\eta }}}\right) {\left( {{\partial _{x}}{%
v_{\varepsilon ,\eta }}}\right) ^{3}}  \notag \\
& +2\left( {\frac{m}{{m-1}}p^{\prime }+mp{{\left( {p^{\prime }}\right) }^{-1}%
}p^{\prime \prime }}\right) \left( {{v_{\varepsilon ,\eta }}}\right) {%
\partial _{x}}{v_{\varepsilon ,\eta }}\partial _{x}^{2}{v_{\varepsilon ,\eta
}}  \label{2.13} \\
& -\left( {m-1}\right) \left( {{\psi _{\varepsilon }}\left( {{p^{\overline{%
\gamma }}}}\right) {p^{1-\overline{\gamma }\left( {1+\beta }\right) }}{{%
\left( {p^{\prime }}\right) }^{-1}}}\right) ^{\prime }\left( {{%
v_{\varepsilon ,\eta }}}\right) {\partial _{x}}{v_{\varepsilon ,\eta }},%
\mbox{ in }\left( {0,\infty }\right) \times \Omega .  \notag
\end{align}%
Let us consider now the function ${w_{\varepsilon ,\eta }}:=\zeta {\left( {{%
\partial _{x}}{v_{\varepsilon ,\eta }}}\right) ^{2}}$ and use the same
argument as in the proof of Lemma \ref{lemma2}. Then, there is a point $%
\left( t_{0},x_{0}\right) \in \left( \tau /2,T+\tau /2\right) \times \Omega $
where $w_{\varepsilon ,\eta }$ attains its maximum and thus
\begin{equation*}
\partial _{x}^{2}{v_{\varepsilon ,\eta }}=0\mbox{ and }{\partial _{t}}{%
w_{\varepsilon ,\eta }}-mp\left( {{v_{\varepsilon ,\eta }}}\right) \partial
_{x}^{2}{w_{\varepsilon ,\eta }}\geq 0,\mbox{ at }\left( {{t_{0}},{x_{0}}}%
\right) .
\end{equation*}%
Then
\begin{equation*}
\zeta {\partial _{x}}{v_{\varepsilon ,\eta }}\left( {{\partial _{tx}}{%
v_{\varepsilon ,\eta }}-mp\left( {{v_{\varepsilon ,\eta }}}\right) \partial
_{x}^{3}{v_{\varepsilon ,\eta }}}\right) \geq -\frac{{\zeta ^{\prime }}}{2}{%
\left( {{\partial _{x}}{v_{\varepsilon ,\eta }}}\right) ^{2}},\mbox{ at }%
\left( {{t_{0}},{x_{0}}}\right) .
\end{equation*}%
Combining this and \eqref{2.13}, we get
\begin{align}
& -m\left( {\frac{m}{{m-1}}p^{\prime \prime }+p\left( {\frac{{p^{\prime
\prime }}}{{p^{\prime }}}}\right) ^{\prime }}\right) \left( {{v_{\varepsilon
,\eta }}}\right) \zeta {\left( {{\partial _{x}}{v_{\varepsilon ,\eta }}}%
\right) ^{2}}  \notag \\
& \hspace{5mm}\leq \frac{{\zeta ^{\prime }}}{2}-\zeta {\psi _{\varepsilon }}%
^{\prime }\left( {{p^{\overline{\gamma }}}}\right) {p^{-\beta \overline{%
\gamma }}}\left( {{v_{\varepsilon ,\eta }}}\right) +\left( {m-1}\right)
\zeta {\psi _{\varepsilon }}\left( {{p^{\overline{\gamma }}}}\right) {p^{1-%
\overline{\gamma }\left( {1+\beta }\right) }}{\left( {p^{\prime }}\right)
^{-2}}p^{\prime \prime }\left( {{v_{\varepsilon ,\eta }}}\right)  \notag \\
& \hspace{9mm}+\left( {\beta +2-m}\right) \zeta {\psi _{\varepsilon }}\left(
{{p^{\overline{\gamma }}}}\right) {p^{-\overline{\gamma }\left( {1+\beta }%
\right) }}\left( {{v_{\varepsilon ,\eta }}}\right) ,\mbox{ at }\left( {{t_{0}%
},{x_{0}}}\right) .  \label{2.15}
\end{align}%
Note that all the last three terms in the right hand side of \eqref{2.15} are
non-positive, and
\begin{equation*}
-m\left( {\frac{m}{{m-1}}p^{\prime \prime }+p\left( {\frac{{p^{\prime \prime
}}}{{p^{\prime }}}}\right) ^{\prime }}\right) \left( {{v_{\varepsilon ,\eta }%
}}\right) \geq \frac{{2{m^{2}}{N_{0}}}}{{3\left( {m-1}\right) }}+\frac{m}{4}%
p\left( {{v_{\varepsilon ,\eta }}}\right) \geq \frac{{2{m^{2}}{N_{0}}}}{{%
3\left( {m-1}\right) }}.
\end{equation*}%
Then \eqref{2.15} implies the following estimate
\begin{equation*}
\zeta {\left( {{\partial _{x}}{v_{\varepsilon ,\eta }}}\right) ^{2}}\left( {{%
t_{0}},{x_{0}}}\right) \leq \frac{{3{c_{0}}\left( {m-1}\right) }}{{4{m^{2}}{%
N_{0}}}}{\tau ^{-1}}.
\end{equation*}%
By using the same arguments than in Lemma \ref{lemma2}, the last
inequality implies
\begin{align*}
{\left( {{\partial _{x}}{h_{\varepsilon ,\eta }}}\right) ^{2}}\left( {\tau ,x%
}\right) & ={\left( {p^{\prime }}\right) ^{2}}\left( {{v_{\varepsilon ,\eta }%
}}\right) {\left( {{\partial _{x}}{v_{\varepsilon ,\eta }}}\right) ^{2}}%
\left( {\tau ,x}\right) \leq {\left( {\frac{{4{N_{0}}}}{3}}\right) ^{2}}%
\frac{{3{c_{0}}\left( {m-1}\right) }}{{4{m^{2}}{N_{0}}}}{\tau ^{-1}} \\
& =\frac{{{2^{m+1}}{c_{0}}\left( {m-1}\right) }}{{3{m^{2}}}}{\tau ^{-1}}%
\left\Vert {{u_{0}}}\right\Vert _{\infty }^{m-1},\ \forall x\in \Omega .
\end{align*}%
The rest of the proof is straightforward.
\end{proof}As in many other parabolic problems, the spatial gradient
estimates given in Lemma \ref{lemma2} imply the global $\mathcal{C}^{\alpha
} $-H\"{o}lder regularity of the solutions. Similar results hold for the
one-dimensional case by using Lemma \ref{lemma3}.

\begin{proposition}
\label{proposition1} Assume the conditions of the first part of Lemma \ref%
{lemma2}. Then, for any $\tau >0$, the following estimates hold for all $%
\left( t,x\right) ,\,\left( s,y\right) \in \left[ \tau ,\infty \right)
\times \Omega $:%
\begin{equation*}
\begin{array}{l}
\left\vert {u_{\varepsilon ,\eta }^{\frac{{m+1}}{2}}\left( {t,x}\right)
-u_{\varepsilon ,\eta }^{\frac{{m+1}}{2}}\left( {s,y}\right) }\right\vert
\leq C_{1}\left[ {C_{2}\left( {\left\vert {x-y}\right\vert +{{\left\vert {t-s%
}\right\vert }^{\frac{1}{{3N}}}}}\right) +C_{3}{{\left\vert {t-s}\right\vert 
}^{\frac{1}{3}}}}\right] , \\ 
C_{1}=C\left( {m,\beta ,N}\right) {\left( {{\tau ^{-1}}\left\Vert {u_{0}}%
\right\Vert _{{L^{\infty }}\left( \Omega \right) }^{1+\beta }+1}\right) ^{%
\frac{1}{2}},}\text{ }{C}_{2}={\left\Vert {u_{0}}\right\Vert _{{L^{\infty }}%
\left( \Omega \right) }^{\frac{{1-\beta }}{2}}}\text{,} \\ 
{C}_{3}={{{\left\vert \Omega \right\vert }^{\frac{1}{2}}}\left\Vert {u_{0}}%
\right\Vert _{{L^{\infty }}\left( \Omega \right) }^{\frac{{m-\beta }}{2}}}%
\end{array}%
\end{equation*}%
if $\beta \leq 1$, and%
\begin{equation*}
\begin{array}{l}
\left\vert {u_{\varepsilon ,\eta }^{\frac{{m+1}}{2}}\left( {t,x}\right)
-u_{\varepsilon ,\eta }^{\frac{{m+1}}{2}}\left( {s,y}\right) }\right\vert
\leq \,\widehat{C_{1}}\left[ \widehat{{C_{2}}}{\left( {\left\vert {x-y}%
\right\vert +{{\left\vert {t-s}\right\vert }^{\frac{1}{{3N}}}}}\right) }^{%
\frac{m+1}{m+\beta }}{+{C}_{3}{{\left\vert {t-s}\right\vert }^{\frac{1}{3}}}}%
\right] \\ 
\widehat{C_{1}}=C(m,\beta ,{\left\Vert {{u_{0}}}\right\Vert _{{L^{\infty }}%
\left( \Omega \right) })}\text{,} \\ 
\widehat{{C_{2}}}=2{\left( {{\tau ^{-1}}\left\Vert {u_{0}}\right\Vert _{{%
L^{\infty }}\left( \Omega \right) }^{1+\beta }+1}\right) ^{\frac{m+1}{%
2(m+\beta )}}},%
\end{array}%
\end{equation*}%
if $\beta >1$. Moreover, if $\beta +m\leq 2$ and $\nabla u_{0}^{1/\gamma
}\in L^{\infty }\left( \Omega \right) $, then%
\begin{equation*}
\begin{array}{c}
\left\vert {u_{\varepsilon ,\eta }^{\frac{{m+1}}{2}}\left( {t,x}\right)
-u_{\varepsilon ,\eta }^{\frac{{m+1}}{2}}\left( {s,y}\right) }\right\vert
\leq K_{1}\left[ {\left( {\left\vert {x-y}\right\vert +{{\left\vert {t-s}%
\right\vert }^{\frac{1}{{3N}}}}}\right) +K}_{2}{{{\left\vert {t-s}%
\right\vert }^{\frac{1}{3}}}}\right] ,\, \\ 
K_{1}=3\cdot {2^{\frac{{1-\beta }}{2}}}\frac{{m+1}}{{m+\beta }}\left\Vert {%
u_{0}}\right\Vert _{{L^{\infty }}\left( \Omega \right) }^{\frac{{1-\beta }}{2%
}}\max \left\{ {{{\left\Vert {\nabla u_{0}^{1/\gamma }}\right\Vert }_{{%
L^{\infty }}\left( \Omega \right) }},{{\left[ {\frac{{\left( {2+\beta -m}%
\right) {{\left( {m+\beta }\right) }^{2}}}}{{2m\left( {{\Delta _{m,N}}-{{%
\left( {\beta +1-m}\right) }^{2}}}\right) }}}\right] }^{1/2}}}\right\} \\ 
{K}_{2}=C\left( {m,\beta ,N}\right) {\left\vert \Omega \right\vert ^{\frac{1%
}{2}}}{\left( {{\tau ^{-1}}\left\Vert {u_{0}}\right\Vert _{{L^{\infty }}%
\left( \Omega \right) }^{1+\beta }+1}\right) ^{\frac{1}{2}}}\left\Vert {u_{0}%
}\right\Vert _{{L^{\infty }}\left( \Omega \right) }^{\frac{{m-\beta }}{2}},%
\end{array}%
\end{equation*}%
for all $\left( t,x\right) ,\,\left( s,y\right) \in \left[ 0,\infty \right)
\times \Omega $.
\end{proposition}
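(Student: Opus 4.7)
The plan is to combine a spatial Lipschitz/H\"older estimate (transferred from Lemma~\ref{lemma2} to $v := u_{\varepsilon,\eta}^{(m+1)/2}$) with a temporal estimate (obtained by testing the PDE against a mollifier at scale $r$ and then optimizing $r$). Throughout, write $h := u_{\varepsilon,\eta}^{1/\gamma} = u_{\varepsilon,\eta}^{(m+\beta)/2}$, so $v = h^{p}$ with $p := (m+1)/(m+\beta)$; Lemma~\ref{lemma2} yields the uniform bound $\|\nabla h(t)\|_{L^{\infty}(\Omega)} \le C_{1}$ for $t \ge \tau$, and its part (iii) extends this bound to $t \ge 0$ when $\beta + m \le 2$ and $\nabla u_{0}^{1/\gamma}\in L^{\infty}(\Omega)$.

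For the spatial part, I would use $\|u_{\varepsilon,\eta}\|_{\infty} \le 2\|u_{0}\|_{\infty}$ and elementary power inequalities to transfer the gradient bound from $h$ to $v = h^{p}$. If $\beta \le 1$ (so $p \ge 1$), the mean value theorem yields
\begin{equation*}
|v(t,x) - v(t,y)| \le p\,\|h(t)\|_{\infty}^{p-1}\|\nabla h(t)\|_{\infty}|x-y| \le C\,C_{1}C_{2}|x-y|,
\end{equation*}
producing the linear factor in $|x-y|$. If $\beta > 1$ (so $p < 1$), the subadditivity $|a^{p} - b^{p}| \le |a-b|^{p}$ gives the H\"older modulus of exponent $(m+1)/(m+\beta)$. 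The third case uses the $t$-independent gradient bound of Lemma~\ref{lemma2}(iii), noting that $\beta + m \le 2$ forces $\beta \le 1$ and hence $p \ge 1$, so the Lipschitz estimate extends down to $t=0$.

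For the temporal part, fix $x \in \Omega$, a scale $r > 0$, and let $\rho_{r}(z) := r^{-N}\rho((z-x)/r)$ be a mollifier of unit integral supported in $B_{r}(x) \subset \Omega$. Testing the PDE against $\rho_{r}$ over $(s,t)$ and integrating by parts gives
\begin{equation*}
\int_{\Omega}\rho_{r}\bigl[u_{\varepsilon,\eta}(t,\cdot)-u_{\varepsilon,\eta}(s,\cdot)\bigr] = -\int_{s}^{t}\!\!\int_{\Omega}\nabla u_{\varepsilon,\eta}^{m}\cdot\nabla\rho_{r} - \int_{s}^{t}\!\!\int_{\Omega}g_{\varepsilon}(u_{\varepsilon,\eta})\,\rho_{r}.
\end{equation*}
The diffusion integral is controlled uniformly in $\varepsilon,\eta$ using the algebraic identity $\nabla u^{m} = \tfrac{2m}{m+\beta}u^{(m-\beta)/2}\nabla h$, which combined with the $L^{\infty}$-bound on $u$ and Lemma~\ref{lemma2} yields $\|\nabla u_{\varepsilon,\eta}^{m}(t)\|_{\infty} \le C\,\|u_{0}\|_{\infty}^{(m-\beta)/2}C_{1}$ (accounting for the factor $\|u_{0}\|_{\infty}^{(m-\beta)/2}$ in $C_{3}$); the absorption integral is controlled by the nonnegativity of $g_{\varepsilon}$ and its uniform $L^{1}(Q_{T})$-bound obtained by integrating the PDE over $\Omega$, a subsequent Cauchy--Schwarz step producing the $|\Omega|^{1/2}$ factor in $C_{3}$.

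Finally, I would combine the two estimates through the decomposition
\begin{equation*}
v(t,x) - v(s,x) = \bigl[v(t,x)-\langle v(t)\rangle_{r}\bigr] + \bigl[\langle v(t)\rangle_{r}-\langle v(s)\rangle_{r}\bigr] + \bigl[\langle v(s)\rangle_{r}-v(s,x)\bigr],
\end{equation*}
with $\langle\cdot\rangle_{r} := \int\rho_{r}\,\cdot$; the outer terms are controlled by the spatial modulus of $v$ (of order $C_{1}C_{2}r$) and the middle term is translated from the averaged $u$-estimate to $v$ via this same spatial regularity. Two distinct optimizations of $r$ against the two pieces of the averaged temporal bound produce the two exponents $|t-s|^{1/(3N)}$ and $|t-s|^{1/3}$ appearing in the statement, and the three cases of the proposition follow by inserting the appropriate spatial modulus. \emph{The main obstacle is keeping every constant uniform in the regularization parameters $\varepsilon, \eta$}: this dictates working with $\nabla u_{\varepsilon,\eta}^{m}$ (bounded via Lemma~\ref{lemma2}) rather than with the singular right-hand side $g_{\varepsilon}(u_{\varepsilon,\eta})$ directly when bounding the diffusion contribution, and invoking only the $L^{1}$-bound of $g_{\varepsilon}(u_{\varepsilon,\eta})$ (never any $L^{\infty}$-bound) when bounding the absorption contribution.
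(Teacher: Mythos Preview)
Your spatial part is fine and matches the paper, but the temporal argument has a genuine gap: testing the PDE against a mollifier $\rho_r$ does \emph{not} give you a $|t-s|$-dependent bound on the absorption contribution that is uniform in $\varepsilon,\eta$. From
\[
\int_\Omega \rho_r\bigl[u_{\varepsilon,\eta}(t)-u_{\varepsilon,\eta}(s)\bigr]
= -\int_s^t\!\!\int_\Omega \nabla u_{\varepsilon,\eta}^m\cdot\nabla\rho_r
- \int_s^t\!\!\int_\Omega g_\varepsilon(u_{\varepsilon,\eta})\rho_r,
\]
the only uniform information you have on $g_\varepsilon(u_{\varepsilon,\eta})$ is the $L^1(Q_T)$ bound $\int_0^T\!\int_\Omega g_\varepsilon(u_{\varepsilon,\eta})\le \|u_0\|_{L^1}$; this is \emph{global} in time and yields at best $\int_s^t\!\int_\Omega g_\varepsilon(u_{\varepsilon,\eta})\rho_r \le C r^{-N}$, a constant in $|t-s|$. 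No Cauchy--Schwarz step repairs this (you would need an $L^2$-in-time bound on $\int_\Omega g_\varepsilon(u_{\varepsilon,\eta}(\sigma))$, which you do not have uniformly). The nonnegativity of $g_\varepsilon$ only gives you a one-sided bound $\langle u(t)\rangle_r - \langle u(s)\rangle_r \le C|t-s|/r$, not the reverse inequality. Consequently your optimization cannot produce any H\"older modulus, let alone the exponents $\tfrac{1}{3}$ and $\tfrac{1}{3N}$.

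The paper circumvents this by a different mechanism: it multiplies the equation by $\partial_t u_{\varepsilon,\eta}^m = m u_{\varepsilon,\eta}^{m-1}\partial_t u_{\varepsilon,\eta}$ and integrates, obtaining the energy inequality
\[
\int_s^t\!\!\int_\Omega |\partial_t z_{\varepsilon,\eta}|^2\,dx\,d\sigma \le \tfrac12\int_\Omega|\nabla u_{\varepsilon,\eta}^m(s)|^2\,dx + \int_\Omega G_\varepsilon\bigl(u_{\varepsilon,\eta}(s)\bigr)\,dx,
\qquad z_{\varepsilon,\eta}:=\tfrac{2\sqrt m}{m+1}u_{\varepsilon,\eta}^{(m+1)/2},
\]
where $G_\varepsilon(r)=m\int_0^r s^{m-1}g_\varepsilon(s)\,ds\le \tfrac{m}{m-\beta}r^{m-\beta}$. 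The absorption enters only through its primitive $G_\varepsilon$, which is \emph{uniformly} bounded by $C\|u_0\|_\infty^{m-\beta}$ (this is where $\beta<m$ is used, and is the source of the $|\Omega|^{1/2}\|u_0\|_\infty^{(m-\beta)/2}$ in $C_3$). One then picks, via the mean-value theorem, a point $\bar x\in B_r(x)$ at which $\int_s^t|\partial_t z_{\varepsilon,\eta}(\sigma,\bar x)|^2\,d\sigma \le C_0/|B_r|$, and Cauchy--Schwarz in time gives $|z_{\varepsilon,\eta}(t,\bar x)-z_{\varepsilon,\eta}(s,\bar x)|\le (C_0|t-s|/(\alpha_N r^N))^{1/2}$. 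The single choice $r=|x-y|+|t-s|^{1/(3N)}$ then yields both exponents simultaneously (spatial pieces $\lesssim r$, temporal piece $\lesssim |t-s|^{1/3}$), not two separate optimizations as you suggest. The message is that the absorption must be handled via the energy of $\partial_t z$, not by testing the equation directly.
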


\begin{proof}
Let us first extend $u_{\varepsilon ,\eta }$ by $\eta $ outside $\Omega $ if
needed and denote still by $u_{\varepsilon ,\eta }$ to that extension. For
arbitrary $t\geq s\geq \tau >0$, by multiplying the equation  by ${\partial
_{t}}u_{\varepsilon ,\eta }^{m}=mu_{\varepsilon ,\eta }^{m-1}{\partial _{t}}{%
u_{\varepsilon ,\eta }}$ and integrating by parts over $\left( s,t\right)
\times \Omega $ we get
\begin{equation*}
\int_{s}^{t}{\int_{\Omega }{mu_{\varepsilon ,\eta }^{m-1}{{\left\vert {{%
\partial _{t}}{u_{\varepsilon ,\eta }}}\right\vert }^{2}}dxd\sigma }}+\frac{1%
}{2}\frac{d}{{dt}}\int_{s}^{t}{\int_{\Omega }{{{\left\vert {\nabla
u_{\varepsilon ,\eta }^{m}}\right\vert }^{2}}dxd\sigma }}+\int_{s}^{t}{%
\int_{\Omega }{mu_{\varepsilon ,\eta }^{m-1}{g_{\varepsilon }}\left( {{%
u_{\varepsilon ,\eta }}}\right) {\partial _{t}}{u_{\varepsilon ,\eta }}%
dxd\sigma }}=0.
\end{equation*}%
Define ${G_{\varepsilon }}\left( r\right) :=m\int_{0}^{r}{{s^{m-1}}{%
g_{\varepsilon }}\left( s\right) ds}$. Notice that
\begin{equation*}
{G_{\varepsilon }}\left( r\right) \leq m\int_{0}^{r}{{s^{m-\beta -1}}ds}=%
\frac{m}{{m-\beta }}{r^{m-\beta }},\ \forall r>0.
\end{equation*}%
Then the last equality implies that
\begin{equation*}
\int_{s}^{t}{\int_{\Omega }{mu_{\varepsilon ,\eta }^{m-1}{{\left\vert {{%
\partial _{t}}{u_{\varepsilon ,\eta }}}\right\vert }^{2}}dxd\sigma }}\leq
\frac{1}{2}\int_{\Omega }{{{\left\vert {\nabla u_{\varepsilon ,\eta
}^{m}\left( {s,x}\right) }\right\vert }^{2}}dx}+\int_{\Omega }{{%
G_{\varepsilon }}\left( {{u_{\varepsilon ,\eta }}\left( {s,x}\right) }%
\right) dx}.
\end{equation*}%
Let ${z_{\varepsilon ,\eta }}:=2\sqrt{m}u_{\varepsilon ,\eta }^{\left( {m+1}%
\right) /2}/\left( {m+1}\right) $. Using \eqref{GE} we get
\begin{align*}
\int_{s}^{t}{\int_{\Omega }{{{\left\vert {{\partial _{t}}{z_{\varepsilon
,\eta }}}\right\vert }^{2}}dxd\sigma }}& \leq C\left( {m,\beta ,N}\right)
\left( {{\tau ^{-1}}\left\Vert {{u_{0}}}\right\Vert _{\infty }^{1+\beta }+1}%
\right) \int_{\Omega }{{u_{\varepsilon ,\eta }^{m-\beta }}\left( {s,x}%
\right) dx} \\
& \leq C\left( {m,\beta ,N}\right) \left\vert \Omega \right\vert \left( {{%
\tau ^{-1}}\left\Vert {{u_{0}}}\right\Vert _{\infty }^{1+\beta }+1}\right)
\left\Vert {{u_{0}}}\right\Vert _{\infty }^{m-\beta }=:{C_{0}}.
\end{align*}%
Given $x,\,y\in \Omega $, define $r:=\left\vert {x-y}\right\vert +{%
\left\vert {t-s}\right\vert ^{\frac{1}{{3N}}}}$. Then, for some $\bar{x}\in
B_{r}\left( x\right) $:
\begin{align*}
{\left\vert {{z_{\varepsilon ,\eta }}\left( {t,\bar{x}}\right) -{%
z_{\varepsilon ,\eta }}\left( {s,\bar{x}}\right) }\right\vert ^{2}}& \leq
\left( {t-s}\right) \int_{s}^{t}{{{\left\vert {{\partial _{t}}{%
z_{\varepsilon ,\eta }}\left( {\sigma ,\bar{x}}\right) }\right\vert }^{2}}%
d\sigma } \\
& =\frac{{t-s}}{{\left\vert {{B_{r}}}\right\vert }}\int_{s}^{t}{\int_{{B_{r}}%
\left( x\right) }{{{\left\vert {{\partial _{t}}{z_{\varepsilon ,\eta }}%
\left( {\sigma ,z}\right) }\right\vert }^{2}}dzd\sigma }}\leq \frac{{{C_{0}}%
\left\vert {t-s}\right\vert }}{{{\alpha _{N}}{r^{N}}}}\leq \frac{{{C_{0}}{{%
\left\vert {t-s}\right\vert }^{\frac{2}{3}}}}}{{{\alpha _{N}}}},
\end{align*}%
where ${\alpha _{N}}:=\left\vert {{B_{1}}}\right\vert =2{\pi ^{N/2}}/\left( {%
N\Gamma \left( {N/2}\right) }\right) $. From the triangle inequality one has
\begin{align*}
& \left\vert {{z_{\varepsilon ,\eta }}\left( {t,x}\right) -{z_{\varepsilon
,\eta }}\left( {s,y}\right) }\right\vert \\
& \hspace{1cm}\leq \left\vert {{z_{\varepsilon ,\eta }}\left( {t,x}\right) -{%
z_{\varepsilon ,\eta }}\left( {t,\bar{x}}\right) }\right\vert +\left\vert {{%
z_{\varepsilon ,\eta }}\left( {t,\bar{x}}\right) -{z_{\varepsilon ,\eta }}%
\left( {s,\bar{x}}\right) }\right\vert +\left\vert {{z_{\varepsilon ,\eta }}%
\left( {s,\bar{x}}\right) -{z_{\varepsilon ,\eta }}\left( {s,y}\right) }%
\right\vert .
\end{align*}%
Then, if $\beta \leq 1$,%
\begin{equation*}
\begin{array}{cc}
\left\vert {{z_{\varepsilon ,\eta }}\left( {t,x}\right) -{z_{\varepsilon
,\eta }}\left( {s,y}\right) }\right\vert &  \\
\leq {\left\Vert {\nabla {z_{\varepsilon ,\eta }}\left( t\right) }%
\right\Vert _{\infty }}\left\vert {x-\bar{x}}\right\vert +{\left( {\frac{{{%
C_{0}}}}{{{\alpha _{N}}}}}\right) ^{1/2}}{\left\vert {t-s}\right\vert ^{1/3}}%
+{\left\Vert {\nabla {z_{\varepsilon ,\eta }}\left( s\right) }\right\Vert
_{\infty }}\left\vert {\bar{x}-y}\right\vert . &  \\
&
\end{array}%
\end{equation*}%
Combining this with the estimate
\begin{equation*}
\left\vert {\nabla {z_{\varepsilon ,\eta }}\left( {t,x}\right) }\right\vert =%
\sqrt{m}u_{\varepsilon ,\eta }^{\frac{{m-1}}{2}}\left( {t,x}\right)
\left\vert {\nabla {u_{\varepsilon ,\eta }}\left( {t,x}\right) }\right\vert
\leq C\left( {m,\beta ,N}\right) u_{\varepsilon ,\eta }^{\frac{{1-\beta }}{2}%
}\left( {t,x}\right) {\left( {{t^{-1}}\left\Vert {{u_{0}}}\right\Vert _{{%
L^{\infty }}\left( \Omega \right) }^{1+\beta }+1}\right) ^{\frac{1}{2}}},
\end{equation*}%
we get the first desired estimate.

\noindent If $\beta >1$, then, since ${{z_{\varepsilon ,\eta }}\left( {t,x}%
\right) =C(m,\beta )}\left( u_{\varepsilon ,\eta }^{\frac{{m+\beta }}{2}%
}\right) ^{\nu }$ with $\nu =(m+1)/(m+\beta )$ and $\nu \in (0,1)$, using
the H\"{o}lder continuity of the function $r\rightarrow r^{\nu }$ we get
\begin{equation*}
\begin{array}{c}
\left\vert {{z_{\varepsilon ,\eta }}\left( {t,x}\right) -{z_{\varepsilon
,\eta }}\left( {t,\bar{x}}\right) }\right\vert  \\
\leq C(m,\beta ,{\left\Vert {{u_{0}}}\right\Vert _{{L^{\infty }}\left(
\Omega \right) })}\left\vert u_{\varepsilon ,\eta }^{\frac{{m+\beta }}{2}}{%
\left( {t,x}\right) -u_{\varepsilon ,\eta }^{\frac{{m+\beta }}{2}}\left( {t,%
\bar{x}}\right) }\right\vert ^{\nu } \\
\leq C(m,\beta ,{\left\Vert {{u_{0}}}\right\Vert _{{L^{\infty }}\left(
\Omega \right) })\left\Vert {\nabla u_{\varepsilon ,\eta }^{\frac{{m+\beta }%
}{2}}\left( t\right) }\right\Vert _{\infty }^{\nu }}\left\vert {x-\bar{x}}%
\right\vert ^{\nu }%
\end{array}%
\end{equation*}%
and we argue analougously with the term $\left\vert {{z_{\varepsilon ,\eta }}%
\left( {s,\bar{x}}\right) -{z_{\varepsilon ,\eta }}\left( {s,y}\right) }%
\right\vert $ to get the desired estimate.

\noindent The proof of the remaining statement can be obtained easily by
using \eqref{GE1} instead of \eqref{GE} in the last inequality. Note also
that $\beta \leq 2-m<1$. This completes our proof. 
\end{proof}

\bigskip

Before ending this section we point out that the estimates \eqref{GE} and %
\eqref{GE1} are independent of $\varepsilon $ and $\eta $. Thus, they play a
role of some useful \textbf{a priori estimates} which will allow the passing
to the limit as $\eta ,\,\varepsilon \downarrow 0$, successively. So, for
any $\varepsilon >0$ fixed, since $g_{\varepsilon }\left( s\right) $ is a
globally Lipschitz function, we can pass to the limit as $\eta \downarrow 0$
showing that $u_{\varepsilon ,\eta }\rightarrow u_{\varepsilon }$ and that $%
u_{\varepsilon }$ is the (unique) weak solution of the problem: 
\begin{equation*}
(P_{\varepsilon })\left\{ 
\begin{split}
& {\partial _{t}}u-\Delta {u^{m}}+{g_{\varepsilon }}\left( u\right) =0,%
\mbox{ in }\left( {0,\infty }\right) \times \Omega , \\
& u=0,\mbox{ on }\left( {0,\infty }\right) \times \partial \Omega , \\
& u\left( {0,x}\right) ={u_{0,\varepsilon }}\left( x\right) ,\mbox{ in }%
\Omega ,
\end{split}%
\right.
\end{equation*}%
where, more in general, we can assume that the initial datum is also
depending on the parameter $\varepsilon >0$, with $u_{0,\varepsilon }\in
L^{\infty }\left( \Omega \right) $, $u_{0,\varepsilon }\geq 0$ (see details,
e.g., in \cite{Aronson1982} or \cite{Vazquez2007}). Moreover, obviously $%
u_{\varepsilon }$ also satisfies the corresponding pointwise gradient
estimates given in Lemma \ref{lemma2} and Lemma \ref{lemma3}.

In the following section we will justify that the limit $\varepsilon
\downarrow 0$ allows us to prove the existence of solutions of equation %
\eqref{P} presented in Theorem \ref{theorem1}.

\section{Proof of Theorem \protect\ref{theorem1} and study of the Cauchy
problem}

In order to complete the proof of Theorem \ref{theorem1} we will structure
it in a series of steps.

\noindent \textbf{Step 1: Monotone convergence in $L^1\left(0,T; L_\delta
^1\left(\Omega\right)\right)$ for bounded initial data.}

\noindent Let us first consider the case in which $u_{0}=u_{0,\varepsilon
}\in L^{\infty }\left( \Omega \right) $, $u_{0}\geq 0$. The family of
functions, $\left( u_{\varepsilon }\right) _{\varepsilon >0}$, obtained at
the end of the previous section, forms a bounded monotone sequence. Indeed,
from the definition of $g_{\varepsilon }$ we see that 
\begin{equation*}
g_{\varepsilon _{1}}\left( s\right) \geq g_{\varepsilon _{2}}\left( s\right)
,\ \forall s\in \mathbb{R},\mbox{ for }0<\varepsilon _{1}<\varepsilon _{2}.
\end{equation*}%
This implies that $u_{\varepsilon _{1}}$ is a subsolution of the equation
satisfied by $u_{\varepsilon _{2}}$ and then since the comparison principle
holds for the problem $(P_{\varepsilon })$ (see e.g., \cite{Aronson1982}) we
get that 
\begin{equation*}
u_{\varepsilon _{1}}\leq u_{\varepsilon _{2}},\mbox{ in }\left( 0,\infty
\right) \times \Omega ,\mbox{ for }0<\varepsilon _{1}<\varepsilon _{2}.
\end{equation*}%
Then, there is a nonnegative function $u\in L^{1}\left( 0,T;L_{\delta
}^{1}\left( \Omega \right) \right) $ such that 
\begin{equation*}
u_{\varepsilon }\downarrow u,\mbox{ as }\varepsilon \downarrow 0.
\end{equation*}%
From the $L_{\delta }^{1}\left( \Omega \right) $-contractivity proved in
Section 6.6 of \cite{Vazquez2007} we know that for all $T\in \left( 0,\infty
\right) $, 
\begin{equation*}
\int_{\Omega }{{u_{\varepsilon }}\left( {T,x}\right) \zeta \left( x\right) dx%
}+\int_{0}^{T}{\int_{\Omega }{{g_{\varepsilon }}\left( {u_{\varepsilon }}%
\right) \zeta \left( x\right) dxdt}}\leq \int_{\Omega }{{u_{0}}\left(
x\right) \zeta \left( x\right) dx}.
\end{equation*}%
It follows from the last inequality and the Dominated Convergence Theorem
that there is a function $\Upsilon $ such that 
\begin{equation*}
\lim_{\varepsilon \downarrow 0}{g_{\varepsilon }}\left( {u_{\varepsilon }}%
\right) =\Upsilon ,\mbox{ in }L^{1}\left( 0,T;L_{\delta }^{1}\left( \Omega
\right) \right) .
\end{equation*}%
Moreover, the monotonicity of $\left( u_{\varepsilon }\right) _{\varepsilon
>0}$ implies 
\begin{equation*}
{g_{\varepsilon }(u_{\varepsilon }}\left( {t,x}\right) )\geq {g_{\varepsilon
}}\left( {u_{\varepsilon }}\right) {\chi _{\left\{ {u>0}\right\} }}\left( {%
t,x}\right) ,\mbox{ a.e. in }\left( {0,\infty }\right) \times \Omega ,
\end{equation*}%
so 
\begin{equation}
\lim_{\varepsilon \downarrow 0}{g_{\varepsilon }(u_{\varepsilon }}\left( {t,x%
}\right) )=\Upsilon \left( {t,x}\right) \geq {u^{-\beta }}{\chi _{\left\{ {%
u>0}\right\} }}\left( {t,x}\right) ,\mbox{ a.e. in }\left( {0,\infty }%
\right) \times \Omega .  \label{3.2}
\end{equation}%
Thus, 
\begin{equation*}
{\left\Vert {{u^{-\beta }}{\chi _{\left\{ {u>0}\right\} }}}\right\Vert
_{L^{1}\left( 0,T;L_{\delta }^{1}\left( \Omega \right) \right) }}\leq
\int_{\Omega }{{u_{0}}\left( x\right) \zeta (x)dx}.
\end{equation*}%
As a matter of fact, we will prove later that 
\begin{equation}
\Upsilon ={u^{-\beta }}{\chi _{\left\{ {u>0}\right\} }},\mbox{ in }%
L^{1}\left( 0,T;L_{\delta }^{1}\left( \Omega \right) \right) .  \label{3.3}
\end{equation}

\noindent \textbf{Step 2: Passing to the limit in $\mathcal{C}\left( \left[
0,T\right] ;L^{1}\left( \Omega \right) \right) $ and $\mathcal{C}\left( %
\left[ 0,T\right] ;L_{\delta }^{1}\left( \Omega \right) \right) $ for
bounded initial data.}

\noindent Let us start by presenting some arguments which are valid to the
case in which $u_0\in L^1\left(\Omega\right)$, $u_0\ge 0$. Since $%
u_\varepsilon$ are limits of classical solutions, by applying Section 3 of
Benilan, Crandall and Sacks \cite{Benilan-Crandall-Sacks}, we know that $%
\left(u_{\varepsilon }\right) _{\varepsilon >0}$ are generalized (and $L^{1}$%
-mild) solutions of the problems 
\begin{equation}
\left\{ 
\begin{split}
& {\partial _{t}}u-\Delta {u^{m}}={f_{\varepsilon }},\mbox{ in }\left( {0,T}%
\right) \times \Omega , \\
& u=0,\mbox{ on }\left( {0,T}\right) \times \partial \Omega , \\
& u\left( {0,x}\right) ={u_{0,\varepsilon }}\left( x\right) ,\mbox{ in }%
\Omega ,
\end{split}%
\right.  \label{3.4}
\end{equation}%
with $f_{\varepsilon }\in L^{1}\left( 0,T;L^{1}\left( \Omega \right) \right) 
$ given by $f_{\varepsilon }\left( t,x\right) =-g_{\varepsilon }\left(
u_{\varepsilon }\left( t,x\right) \right)$.

From the Step 1 we know that $f_{\varepsilon }\rightarrow -\Upsilon $ in $%
L^{1}\left( 0,T;L^{1}\left( \Omega \right) \right) $ and $u_{0,\varepsilon
}\rightarrow u_{0}$ in $L^{1}\left( \Omega \right) $, as $\varepsilon
\downarrow 0$. Then, by \cite[Theorem I]{Benilan-Crandall-Sacks} we know
that $u_{\varepsilon }\rightarrow u$ in $\mathcal{C}\left( \left[ 0,T\right]
;L^{1}\left( \Omega \right) \right) $ with $u$ the unique generalized (and $%
L^{1}$-mild) solution of the problem 
\begin{equation}
\left\{ 
\begin{split}
& {\partial _{t}}u-\Delta {u^{m}}=-\Upsilon ,\mbox{ in }\left( {0,T}\right)
\times \Omega , \\
& u=0,\mbox{ on }\left( {0,T}\right) \times \partial \Omega , \\
& u\left( {0,x}\right) ={u_{0}}\left( x\right) ,\mbox{ in }\Omega .
\end{split}%
\right.  \label{3.5}
\end{equation}%
Let us now prove \eqref{3.3}. Since $u_{\varepsilon }$ is a weak solution of
equation $(P_{\varepsilon })$, one has 
\begin{equation*}
\iint_{\mathrm{Supp}\left( \varphi \right) }{\left( {\ -{u_{\varepsilon }}{%
\partial _{t}}\varphi -u_{\varepsilon }^{m}\Delta \varphi +{g_{\varepsilon }}%
\left( {u_{\varepsilon }}\right) \varphi }\right) dxdt}=0,\ \forall \varphi
\in \mathcal{C}_{c}^{\infty }\left( \left( 0,T\right) \times \Omega \right)
,\,\varphi \geq 0.
\end{equation*}%
Letting $\varepsilon \downarrow 0$ and since $u$ is also a very weak
solution of problem (\ref{3.5}), we get 
\begin{equation*}
-\iint_{\mathrm{Supp}\left( \varphi \right) }{\left( {\ u{\partial _{t}}%
\varphi +u^{m}\Delta \varphi }\right) dxdt}+\mathop {\lim }%
\limits_{\varepsilon \downarrow 0}\iint_{\mathrm{Supp}\left( \varphi \right)
}{{g_{\varepsilon }}\left( {u_{\varepsilon }}\right) \varphi dxdt}=0.
\end{equation*}%
Thus, 
\begin{equation}
\mathop {\lim }\limits_{\varepsilon \downarrow 0}\iint_{\mathrm{Supp}\left(
\varphi \right) }{{g_{\varepsilon }}\left( {u_{\varepsilon }}\right) \varphi
dxdt}=\iint_{\mathrm{Supp}\left( \varphi \right) }{{u^{-\beta }}{\chi
_{\left\{ {u>0}\right\} }}\varphi dxdt},\ \forall \varphi \in C_{c}^{\infty
}\left( {\left( {0,T}\right) \times \Omega }\right) ,\,\varphi \geq 0.
\label{3.6}
\end{equation}%
Then, $\Upsilon ={u^{-\beta }}{\chi _{\left\{ {u>0}\right\} }}$, in $%
L^{1}\left( 0,T;L^{1}\left( \Omega \right) \right) $ follows from \eqref{3.2}
and \eqref{3.6}.

The same conclusion also holds for similar arguments for the more general
case in which $u_{0}\in L_{\delta }^{1}\left( \Omega \right) $, $u_{0}\geq 0$%
. The only modification to be justified is the application of the continuous
dependence result for mild-solutions of \eqref{3.4}. The mean ingredient of
the proof of Theorem I of \cite{Benilan-Crandall-Sacks} is that the abstract
operator associated to problem $(P_{\varepsilon })$ is a $m$-$T$-accretive
operator on the Banach space $X=L^{1}\left( \Omega \right) $ but the same
conclusion arises once we prove the same properties on the space $X=L_{\zeta
}^{1}\left( \Omega \right) =L_{\delta }^{1}\left( \Omega \right) $ (with $%
\zeta $ given by \eqref{Eq Function xi}). This is more or less implicitly
well-known property (see, e.g., Section 6.6 of \cite{Vazquez2007}) but since
we are unable to find a more detailed proof we will get here a short proof
of this set of properties. Given $f\in L_{\delta }^{1}\left( \Omega \right) $
and $\lambda \geq 0$, we start by recalling the definition of very weak
solution of the stationary problem 
\begin{equation}
P(f,\lambda )=\left\{ 
\begin{split}
-\Delta (\left\vert {u}\right\vert {^{m-1}u)+\lambda u}={f},& \text{ in }%
\Omega , \\
\left\vert {u}\right\vert {^{m-1}u}=0,& \text{ on }\partial \Omega .
\end{split}%
\right.  \label{Ec. Porous media very weak}
\end{equation}

\begin{definition}
\label{definition4} Given $f\in L_\delta ^1\left(\Omega\right)$ and $\lambda
\ge 0$, a function $u\in L_\delta ^1\left(\Omega\right)$ is called a \emph{%
very weak solution} of $P\left(f,\lambda\right)$ if $\left|u\right|^{m-1} u
\in L^1\left(\Omega\right)$ and for any $\psi \in W^{2,\infty}
\left(\Omega\right)\cap W_0^{1,\infty}\left(\Omega\right)$, 
\begin{align*}
\int_\Omega {{u^m}\left( x \right)\Delta \psi \left( x \right)dx} + \lambda
\int_\Omega {u\left( x \right)\psi \left( x \right)dx} = \int_\Omega {%
f\left( x \right)\psi \left( x \right)dx} .
\end{align*}
\end{definition}

We have

\begin{lemma}
\label{lemma4} Let $X=L_{\zeta }^{1}\left( \Omega \right) $, $m>0$ and
define the operator $A:D\left( A\right) \rightarrow X$ given by 
\begin{equation*}
Au=-\Delta (\left\vert {u}\right\vert {^{m-1}u)}=:f,\ u\in D\left( A\right) ,
\end{equation*}%
with 
\begin{equation*}
D\left( A\right) =\left\{ u\in L_{\zeta }^{1}\left( \Omega \right) ;u%
\mbox{ is a
very weak solution of }P\left( f,0\right) \mbox{ for some }f\in L_{\zeta
}^{1}\left( \Omega \right) \right\} .
\end{equation*}%
Then $A$ is a $m$-$T$-accretive operator on the Banach space $X$ and $%
\overline{D\left( A\right) }=X$.
\end{lemma}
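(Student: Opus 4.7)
The plan is to verify the three standard ingredients of the Crandall--Liggett theory in the Banach space $X=L_\zeta^1(\Omega)$: $T$-accretivity of $I+\lambda A$ for every $\lambda>0$, the range condition $R(I+\lambda A)=X$, and density of $D(A)$. The weight $\zeta$ is ideally suited for this task: the identity $-\Delta\zeta=1$ converts integration-by-parts identities against $\zeta$ into ordinary (unweighted) $L^1$-integrals, providing the bridge between the classical $L^1$-theory (Section~6.6 of \cite{Vazquez2007} and \cite{Brezis 1971, Benilan1994}) and its weighted counterpart referenced in \cite{Diaz2010, Rakotoson2011, Veron2004}.

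For $T$-accretivity, I would take $u_1,u_2\in D(A)$, set $f_i:=u_i+\lambda Au_i$, $v:=u_1-u_2$, $w:=|u_1|^{m-1}u_1-|u_2|^{m-1}u_2$, and exploit the monotonicity of $r\mapsto|r|^{m-1}r$, which makes $v$ and $w$ share the same sign. In the very weak sense one has $v-\lambda\Delta w=f_1-f_2$. Regularizing $\mathrm{sign}^+$ by smooth nondecreasing functions $S_\delta$ with $S_\delta(0)=0$, using $\psi_\delta:=S_\delta(w)\zeta\in W_0^{1,\infty}(\Omega)$ as admissible test function, and invoking Kato's inequality in the limit $\delta\downarrow 0$, the crucial computation is
\[
-\int_\Omega\Delta w^+\,\zeta\,dx=-\int_\Omega w^+\,\Delta\zeta\,dx=\int_\Omega w^+\,dx\ge 0,
\]
where the boundary contributions vanish thanks to $\zeta=0$ on $\partial\Omega$. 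This yields $\|(u_1-u_2)^+\|_{L_\zeta^1}\le\|(f_1-f_2)^+\|_{L_\zeta^1}$, i.e., $T$-accretivity of $I+\lambda A$ in $X$.

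The range condition I would obtain by a two-step approximation. For $\phi\in L^\infty(\Omega)$, the equation $u+\lambda Au=\phi$ admits a unique solution $u\in L^\infty(\Omega)$ with $|u|^{m-1}u\in H_0^1(\Omega)$ by the classical Brezis--Strauss theory, so $u\in D(A)$. For a general $\phi\in L_\zeta^1(\Omega)$, I would truncate $\phi_n:=T_n\phi\in L^\infty(\Omega)$; the previous step gives $u_n\in D(A)$ with $u_n+\lambda Au_n=\phi_n$, and the $T$-accretivity just proven yields $\|u_n-u_k\|_{L_\zeta^1}\le\|\phi_n-\phi_k\|_{L_\zeta^1}\to 0$. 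Testing the resolvent equation against $\zeta$ gives the uniform bound $\int_\Omega|u_n|^m\,dx\le C(\lambda)\|\phi\|_{L_\zeta^1}$, which allows passing to the limit in the very weak formulation and concludes $u\in D(A)$ with $u+\lambda Au=\phi$. Density of $D(A)$ in $X$ then follows from the standard continuity property $\|J_\lambda\phi-\phi\|_{L_\zeta^1}\to 0$ as $\lambda\downarrow 0$ for $\phi\in L^\infty(\Omega)$, combined with the elementary density of $L^\infty(\Omega)$ in $L_\zeta^1(\Omega)$ via truncation and dominated convergence with the integrable weight $\zeta$.

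I expect the main technical obstacle to be the rigorous implementation of Kato's inequality against the non-compactly-supported weight $\zeta$ in the accretivity step, because $w$ is only known to lie in $L^1(\Omega)$ (in fact $w\in L^1(\Omega)$ after testing with $\zeta$) with $\Delta w$ understood in the very weak sense, so the integration by parts needs justification up to the boundary. The properties $\zeta\in C^\infty(\Omega)\cap C^1(\overline{\Omega})$, $\zeta=0$ on $\partial\Omega$, and the two-sided bound \eqref{(1.14)} ensure that the boundary terms in the regularized identities vanish in the limit $\delta\downarrow 0$, so the argument goes through without further restrictions on $u_1,u_2$.
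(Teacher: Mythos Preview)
Your proposal is correct and follows essentially the same strategy as the paper, though with a minor presentational difference. The paper first performs the substitution $v=|u|^{m-1}u$ to reduce $P(f,\lambda)$ to the semilinear problem $-\Delta v+\lambda|v|^{1/m-1}v=f$ with $v=0$ on $\partial\Omega$, and then quotes the weighted comparison estimate (Theorem~2.5 of \cite{Diaz2010}) to obtain \eqref{accretivity} directly; it also mentions, as an alternative, the local Kato inequality of \cite{Diaz2018}. Your argument is precisely this alternative, carried out without the substitution: you apply a Kato-type inequality against the weight $\zeta$ and exploit $-\Delta\zeta=1$ to turn the diffusion term into a nonnegative unweighted integral. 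The range condition and density are handled in both cases by the same resolvent/approximation ideas.

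One small caveat: your test function $\psi_\delta=S_\delta(w)\,\zeta$ is not admissible in the very weak formulation as written, since Definition~\ref{definition4} requires $\psi\in W^{2,\infty}(\Omega)\cap W_0^{1,\infty}(\Omega)$ and $w$ is only in $L^1(\Omega)$. You correctly flag this as the main technical obstacle; the rigorous implementation requires either a further mollification of $w$ or a direct appeal to the very-weak Kato inequality of \cite{Diaz2018}, which is exactly why the paper cites that result rather than carrying out the limit by hand.
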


\begin{proof}
To show that $A$ is a $T$-accretive operator on $X$ we have to show that,
given $f,\,\widehat{f}\in L_{\zeta }^{1}\left( \Omega \right) $ and $\lambda
>0$, if $u,\,\widehat{u}$ are very weak solutions of $P\left( f,\lambda
\right) $ and $P\left( \widehat{f},\lambda \right) $, respectively. Then
\begin{equation}
\lambda \left\Vert \left[ u-\widehat{u}\right] _{+}\right\Vert _{L_{\zeta
}^{1}\left( \Omega \right) }\leq \left\Vert \left[ f-\widehat{f}\right]
_{+}\right\Vert _{L_{\zeta }^{1}\left( \Omega \right) }.  \label{accretivity}
\end{equation}%
But by introducing $v=\left\vert u\right\vert ^{m-1}u$ then $v\in
L^{1}\left( \Omega \right) $ is a very weak solution of
\begin{equation}
\widetilde{P}\left( {f,\lambda }\right) =\left\{
\begin{split}
-\Delta v+\lambda {{\left\vert v\right\vert }^{\frac{1}{m}-1}}v=f,& \mbox{
in }\Omega , \\
v=0,& \mbox{ on }\partial \Omega ,
\end{split}%
\right.
\end{equation}%
(and similarly for $\widehat{v}=\left\vert \widehat{u}\right\vert ^{m-1}%
\widehat{u}$). Assume for the moment that $f,\,\widehat{f}\geq 0$ and thus
the positivity of $u,\,\widehat{u}$ was proved in \cite{Brezis 1971} (see
also \cite{Brezis 1996}) and the estimate \eqref{accretivity} coincides
exactly with the estimate (19) given in Theorem 2.5 of D\'{\i}az and
Rakotoson \cite{Diaz2010} (notice that although $L_{\zeta }^{1}\left( \Omega
\right) =L_{\delta }^{1}\left( \Omega \right) $, thanks to (\ref{(1.14)}),
the norms $\left\Vert \cdot \right\Vert _{L_{\zeta }^{1}\left( \Omega
\right) }$ and $\left\Vert \cdot \right\Vert _{L_{\delta }^{1}\left( \Omega
\right) }$ are related by some constants: by replacing $\left\Vert \cdot
\right\Vert _{L_{\delta }^{1}\left( \Omega \right) }$ by the norm $%
\left\Vert \cdot \right\Vert _{L_{\zeta }^{1}\left( \Omega \right) }$ then
the constant $C$ arising in the estimate (19) given in Theorem 2.5 of D\'{\i}%
az and Rakotoson \cite{Diaz2010} becomes exactly $C=1$ as needed in %
\eqref{accretivity}). By using the decomposition $f=f_{+}-f_{-}$ the
estimate \eqref{accretivity} holds for general $f,\,\widehat{f}\in L_{\zeta
}^{1}\left( \Omega \right) $. An alternative proof can be obtained by
applying the \textit{local Kato's inequality} given in Theorem 4.4 of \cite%
{Diaz2018}.

The proof of the $m$-accretivity of $A$ (i.e., $R\left(A + \lambda I\right)
=X$) was already proved in \cite{Brezis 1971} (see also \cite{Brezis 1996}
and Theorem 2.5 of \cite{Diaz2010}).

Moreover, given $f\in L_\zeta ^1\left(\Omega\right)$ we consider $u_\alpha
\in D\left(A\right)$ be the unique solution of $\alpha A u_\alpha + u
_\alpha = f$. Then making $\alpha \downarrow 0$ we have (again by Theorem
2.5 \cite{Diaz2010}) that $u_\alpha \to f$ in $L_\zeta ^1 \left(\Omega\right)
$, which proves that $\overline{D\left(A\right)} = X$.
\end{proof}

As a consequence of Lemma \ref{lemma4}, we can apply the Crandall-Liggett
theorem and by the accretive operator theory we know that $f_{\varepsilon
}\rightarrow -\Upsilon $ in $L^{1}\left( 0,T;L_{\zeta }^{1}\left( \Omega
\right) \right) $ and $u_{0,\varepsilon }\rightarrow u_{0}$ in $L_{\zeta
}^{1}\left( \Omega \right) $, implies that $u_{\varepsilon }\rightarrow u$
in $\mathcal{C}\left( \left[ 0,T\right] ;L_{\zeta }^{1}\left( \Omega \right)
\right) $ with $u_{\varepsilon }$ and $u$ the unique $L_{\zeta }^{1}\left(
\Omega \right) $-mild solutions of the problems \eqref{3.4} and \eqref{3.5},
respectively, as $\varepsilon \downarrow 0$. Now, the adaptation of the
proof of \cite[Theorem I]{Benilan-Crandall-Sacks} to show that $%
u_{\varepsilon }\rightarrow u$ in $\mathcal{C}\left( \left[ 0,T\right]
;L_{\zeta }^{1}\left( \Omega \right) \right) $ as generalized solutions is a
trivial fact. This implies, as before, that $\Upsilon ={u^{-\beta }}{\chi
_{\left\{ {u>0}\right\} }}$, in $L^{1}\left( 0,T;L_{\delta }^{1}\left(
\Omega \right) \right) $.

\begin{remark}
We point out that the uniqueness of a generalized (or $L^1$-mild) solution
of the problem \eqref{3.5}, when $\Upsilon \left(t,x\right)$ is prescribed
in $L^1\left(0,T; L_\zeta ^1\left(\Omega\right)\right)$ does not imply the
uniqueness of the generalized (or $L_\zeta ^1$-mild) solution of the
non-monotone problem \eqref{P}. This question remains as an open problem: as
in \cite{Winkler2007}, the uniqueness of solutions fails even for general
bounded nonnegative initial data. Some partial results are given in \cite%
{Diaz-Giacomoni}.
\end{remark}

\noindent \textbf{Step 3: Maximality of the above constructed solution.} Let
us show that if $v$ is a different solution of equation \eqref{P} then, 
\begin{equation*}
v\left( t,x\right) \leq u\left( t,x\right) ,\mbox{ a.e. in }\left( 0,\infty
\right) \times \Omega .
\end{equation*}%
Indeed, since $g_{\varepsilon }\left( v\right) \leq v^{-\beta }\chi
_{\left\{ v>0\right\} }$, $\forall \varepsilon >0$, then 
\begin{equation*}
\partial _{t}v-\Delta v^{m}+g_{\varepsilon }\left( v\right) \leq 0,%
\mbox{ in
}\mathcal{D}^{\prime }\left( \left( 0,\infty \right) \times \Omega \right) ,
\end{equation*}%
which implies that $v$ is a subsolution of problem $(P_{\varepsilon })$
(with the same initial datum). Since $g_{\varepsilon }\left( s\right) $ is a
globally Lipschitz function, thanks to $L_{\zeta }^{1}$-contraction result
(consequence of the $T$-accretivity of $A$ in $X=L_{\zeta }^{1}\left( \Omega
\right) $ (see also \cite{Aronson1982} or \cite{Benilan-Crandall-Sacks})),
we get 
\begin{equation*}
v\left( t,x\right) \leq u_{\varepsilon }\left( t,x\right) ,\mbox{ a.e. in }%
\left( 0,\infty \right) \times \Omega .
\end{equation*}%
Passing to th elimit as $\varepsilon \downarrow 0$ we obtain the wanted
inequality.

\noindent \textbf{Step 4: Treatment of unbounded nonnegative initial data $%
u_{0}$.} Let $u_{0}\in L_{\delta }^{1}\left( \Omega \right) $, $u_{0}\geq 0$
and let 
\begin{equation*}
{u_{0,n}}\left( x\right) =\inf \left\{ {{u_{0}}\left( x\right) ,n}\right\} .
\end{equation*}%
Then $u_{0,n}\in L^{\infty }\left( \Omega \right) $, $u_{0,n}\geq 0$ and ${%
u_{0,n}}\uparrow {u_{0}}$ in $L_{\delta }^{1}\left( \Omega \right) $ as $%
n\uparrow +\infty $. Then, as before we can apply the comparison principle
to deduce that, for any $\epsilon >0$, if ${u_{\epsilon ,n}}$ is the
(unique) solution of problem $(P_{\epsilon })$, then

\begin{equation*}
{u_{\epsilon ,n_{1}}\leq u_{\epsilon ,n_{2}}}\text{ in }\left( {0,\infty }%
\right) \times \Omega \text{, if }n_{1}\leq n_{2}.
\end{equation*}%
Moreover, we have the uniform bound 
\begin{equation}
0\leq u_{n}\left( t,x\right) \leq U\left( t,x\right) ,\mbox{ a.e. in }\left(
0,T\right) \times \Omega ,  \label{Inequ. Porous media subsolution}
\end{equation}%
with $U\in \mathcal{C}\left( \left[ 0,T\right] ;L_{\zeta }^{1}\left( \Omega
\right) \right) $ the unique $L_{\zeta }^{1}$-mild solution of the
homogeneous problem 
\begin{equation}
\left\{ 
\begin{split}
& {\partial _{t}}U-\Delta {U^{m}}=0,\mbox{ in }\left( {0,T}\right) \times
\Omega , \\
& U=0,\mbox{ on }\left( {0,T}\right) \times \partial \Omega , \\
& U\left( {0,x}\right) ={u_{0}}\left( x\right) ,\mbox{ in }\Omega .
\end{split}%
\right.  \label{3.11}
\end{equation}%
Indeed, it suffices to use that for any $n$ and $\epsilon >0$ we have $-{{%
g_{\varepsilon }}\left( {u_{\epsilon ,n}}\right) }\left( t,x\right) \leq 0$
in $\left( 0,T\right) \times \Omega $, and to use the comparison principle
for the unperturbed nonlinear diffusion problem. Then, passing to the limit,
as in Step 2, we deduce that if ${u_{n}}$ is the \textit{maximal} $L_{\zeta
}^{1}$-mild solution of (\ref{P}) associated to $u_{0,n}\in L^{\infty
}\left( \Omega \right) $ then 
\begin{equation*}
{u_{n_{1}}\leq u_{n_{2}}}\text{ in }\mathcal{C}\left( \left[ 0,T\right]
;L_{\zeta }^{1}\left( \Omega \right) \right) \text{, if }n_{1}\leq n_{2}.
\end{equation*}%
Moreover, 
\begin{equation*}
{u_{n_{1}}^{-\beta }\geq u_{n_{2}}^{-\beta }}\text{ on }\left\{ (t,x)\in
\left( {0,\infty }\right) \times \Omega ,{u_{n_{1}}(t,x)>0}\right\} \text{,
if }n_{1}\leq n_{2},
\end{equation*}%
and that, in fact, $\left\{ {u_{n_{1}}>0}\right\} \supset \left\{ {%
u_{n_{2}}>0}\right\} $. Then $\Upsilon _{n}:=-{u_{n}^{-\beta }}{\chi
_{\left\{ {u}_{n}{>0}\right\} }}$, is a monotone sequence of nonnegative
functions in $L^{1}\left( 0,T;L_{\delta }^{1}\left( \Omega \right) \right) $
which converges to some $\Upsilon $ in $L^{1}\left( 0,T;L_{\delta
}^{1}\left( \Omega \right) \right) $ and thus we can apply, again the
extension of the Benilan-Crandall-Saks \cite{Benilan-Crandall-Sacks}
argument to pass to the limit of $L_{\zeta }^{1}-$mild solutions of problems
of the type (\ref{3.4}) and thus we get that $u_{n}\rightarrow u$ in $%
\mathcal{C}\left( \left[ 0,T\right] ;L_{\zeta }^{1}\left( \Omega \right)
\right) $ with $u$ the unique $L_{\zeta }^{1}\left( \Omega \right) $-mild
solution of the problem \eqref{3.5}, as $n\uparrow +\infty $. Arguing as in
Step 2 we get that $\Upsilon =-{u^{-\beta }}{\chi _{\left\{ {u>0}\right\} }}$
and thus ${u^{-\beta }}{\chi _{\left\{ {u>0}\right\} }}\in $ $L^{1}\left(
0,T;L_{\delta }^{1}\left( \Omega \right) \right) $. The proof of the
maximality is again similar to the arguments of Step 4.

\noindent \textbf{Step 5: Gradient estimate for $u_{0}\in $}$L_{\zeta
}^{1}\left( \Omega \right) $\textbf{.}

\noindent Notice that, from (\ref{Inequ. Porous media subsolution}) we get
(after passing to the limit, as $n\uparrow +\infty $) 
\begin{equation}
0\leq u\left( t,x\right) \leq U\left( t,x\right) ,\mbox{ a.e. in }\left(
0,T\right) \times \Omega ,
\end{equation}%
On the other hand, by applying the smoothing effects shown in Veron \cite%
{Veron1979} (see also \cite{Rakotoson2011} for the semilinear case), and the
explicit sharp estimate given in \cite[(17.32)]{Vazquez2007} (see a
different proof via other rearrangement arguments in \cite{Diaz1991}
combined with Theorem 3.1 of \cite{Diaz2010}), we know that for any $m\geq 1$
\begin{equation}
{\left\Vert {U\left( t\right) }\right\Vert _{{L^{\infty }}\left( \Omega
\right) }}\leq \frac{{C\left( \Omega \right) }}{{t^{\alpha }}}\left\Vert {%
u_{0}}\right\Vert _{L_{\zeta }^{1}\left( \Omega \right) }^{\sigma },
\label{Estimate L^infinity}
\end{equation}%
with%
\begin{equation*}
\text{ }\alpha =\frac{N}{N(m-1)+2}\text{ and }\sigma =\frac{2}{N(m-1)+2}.
\end{equation*}%
In the special case of $m>1$ we have an universal estimate for $U$ (see,
e.g. Proposition 5.17 of \cite{Vazquez2007}) 
\begin{equation}
{\left\Vert {U\left( t\right) }\right\Vert _{{L^{\infty }}\left( \Omega
\right) }\leq C(m,N)R}^{\frac{2}{m-1}}t^{-\frac{1}{m-1}}
\label{Universal bound pM}
\end{equation}%
where $R$ is the radius of a ball containing $\Omega $.

\noindent Thus the same estimates \eqref{Estimate L^infinity}, for $m\geq 1,$
and \eqref{Universal bound pM}, for $m>1$, also hold for $u$. Using Lemma %
\ref{lemma2} we get that for any $t>0$, a.e. $x\in \Omega ,$ and for any $%
\lambda \in (0,t)$ we have 
\begin{equation*}
{\left\vert {\nabla u_{\varepsilon }^{1/\gamma }\left( t{,x}\right) }%
\right\vert ^{2}}\leq C\left( \frac{{\left\Vert {u(t-\lambda )}\right\Vert _{%
{L^{\infty }}\left( \Omega \right) }^{1+\beta }}}{t-\lambda }{+1}\right)
\leq C\left( \frac{C(\Omega )^{1+\beta }{\left\Vert {u}_{0}\right\Vert _{{%
L^{1}}\left( \Omega \right) }^{(1+\beta )\sigma }}}{(t-\lambda )^{\alpha +1}}%
{+1}\right) ,
\end{equation*}%
if $m\geq 1$, or 
\begin{equation*}
{\left\vert {\nabla u_{\varepsilon }^{1/\gamma }\left( t{,x}\right) }%
\right\vert ^{2}}\leq C\left( \frac{{\left\Vert {u(t-\lambda )}\right\Vert _{%
{L^{\infty }}\left( \Omega \right) }^{1+\beta }}}{t-\lambda }{+1}\right)
\leq C\left( \frac{\left[ {C(m,N)R}^{\frac{2}{m-1}}(t-\lambda )^{-\frac{1}{%
m-1}}\right] ^{1+\beta }}{(t-\lambda )}{+1}\right) \text{,}
\end{equation*}%
if $m>1$. Passing to the limit, first as $\lambda \downarrow 0$ and then as $%
\varepsilon \downarrow 0,$ (using the convergence of the Step 2 and weak-$%
\star $ convergence in ${L^{\infty }}\left( \Omega \right) $) we get the
pointwise gradient estimate given in ii) of Theorem \ref{theorem1}, with $%
\omega =\alpha +1$ if $m\geq 1$ and $\omega =(\beta +m)/(m-1)$ if $m>1$.

\noindent Now, the proof of the fact that the \emph{maximal} $L^{1}$-mild
solution is H\"{o}lder continuous on $({0,T]}\times \overline{\Omega }$ is a
simple consequence of Proposition \ref{proposition1} and the above
convergence arguments.

\noindent \textbf{Step 6: Case $m+\beta <2$: gradient convergence and proof
of iii) of Theorem \ref{theorem1}.}

\noindent In order to prove part iii) of Theorem \ref{theorem1} we shall use
other type of convergence arguments. As a matter of fact, we will prove a
stronger result showing the gradient convergence as $\varepsilon \downarrow
0 $: 
\begin{equation*}
\nabla {u_{\varepsilon }}\rightarrow \nabla u,\mbox{ a.e. in }\left( {0,T}%
\right) \times \Omega ,
\end{equation*}%
up to a subsequence. Indeed, from the equations satisfied by $u_{\varepsilon
}$ and $u_{\varepsilon ^{\prime }}$ for any $\varepsilon >\varepsilon
^{\prime }>0$, we have 
\begin{equation*}
{\partial _{t}}\left( {{u_{\varepsilon }}-{u_{\varepsilon ^{\prime }}}}%
\right) -\left( {\Delta u_{\varepsilon }^{m}-\Delta u_{\varepsilon ^{\prime
}}^{m}}\right) +{g_{\varepsilon }}\left( {u_{\varepsilon }}\right) -{%
g_{\varepsilon ^{\prime }}}\left( {{u_{\varepsilon ^{\prime }}}}\right) =0.
\end{equation*}%
For any $\delta >0$, let us define 
\begin{equation*}
{T_{\delta }}\left( s\right) =\left\{ 
\begin{split}
& s,\mbox{ if }\left\vert s\right\vert <\delta , \\
& \delta \,\mathrm{sign}\left( s\right) ,\mbox{ if }\left\vert s\right\vert
\geq \delta ,
\end{split}%
\right. \mbox{ and }{S_{\delta }}\left( r\right) =\int_{0}^{r}{{T_{\delta }}%
\left( s\right) ds}.
\end{equation*}%
For any $0<\tau <T<\infty $, by using $T_{\delta }\left( u_{\varepsilon
}-u_{\varepsilon ^{\prime }}\right) $ as a test function in \eqref{3.6}, and
integrating both sides of \eqref{3.6} on $\left( \tau ,T\right) \times
\Omega $, we obtain 
\begin{align*}
& \int_{\Omega }{{S_{\delta }}\left( {{u_{\varepsilon }}-{u_{\varepsilon
^{\prime }}}}\right) \left( {T,x}\right) dx}+\int_{\tau }^{T}{\int_{\Omega }{%
\left( {mu_{\varepsilon }^{m-1}\nabla {u_{\varepsilon }}-mu_{\varepsilon
^{\prime }}^{m-1}\nabla {u_{\varepsilon ^{\prime }}}}\right) \cdot \nabla {%
T_{\delta }}\left( {{u_{\varepsilon }}-{u_{\varepsilon ^{\prime }}}}\right)
dxdt}} \\
& \hspace{1cm}+\int_{\tau }^{T}{\int_{\Omega }{\left( {{g_{\varepsilon }}%
\left( {u_{\varepsilon }}\right) -{g_{\varepsilon ^{\prime }}}\left( {{%
u_{\varepsilon ^{\prime }}}}\right) }\right) {T_{\delta }}\left( {{%
u_{\varepsilon }}-{u_{\varepsilon ^{\prime }}}}\right) dxdt}}=\int_{\Omega }{%
{S_{\delta }}\left( {{u_{\varepsilon }}-{u_{\varepsilon ^{\prime }}}}\right)
\left( {\tau ,x}\right) dx}.
\end{align*}%
It follows from the facts $S_{\delta }\left( r\right) \geq 0$ and $S_{\delta
}\left( r\right) \leq \delta \left\vert r\right\vert $, $\forall r\in 
\mathbb{R}$ that 
\begin{align*}
& \int_{\tau }^{T}{\int_{\Omega }{mu_{\varepsilon }^{m-1}\nabla \left( {{%
u_{\varepsilon }}-{u_{\varepsilon ^{\prime }}}}\right) \cdot \nabla {%
T_{\delta }}\left( {{u_{\varepsilon }}-{u_{\varepsilon ^{\prime }}}}\right)
dxdt}} \\
& \hspace{1cm}+\int_{\tau }^{T}{\int_{\Omega }{m\left( {u_{\varepsilon
}^{m-1}-u_{\varepsilon ^{\prime }}^{m-1}}\right) \nabla {u_{\varepsilon
^{\prime }}}\cdot \nabla {T_{\delta }}\left( {{u_{\varepsilon }}-{%
u_{\varepsilon ^{\prime }}}}\right) dxdt}} \\
& \hspace{1cm}+\int_{\tau }^{T}{\int_{\Omega }{\left( {{g_{\varepsilon }}%
\left( {u_{\varepsilon }}\right) -{g_{\varepsilon ^{\prime }}}\left( {{%
u_{\varepsilon ^{\prime }}}}\right) }\right) {T_{\delta }}\left( {{%
u_{\varepsilon }}-{u_{\varepsilon ^{\prime }}}}\right) dxdt}}\leq \delta
\int_{\Omega }{\left\vert {\left( {{u_{\varepsilon }}-{u_{\varepsilon
^{\prime }}}}\right) \left( {\tau ,x}\right) }\right\vert dx}.
\end{align*}%
Since $\left\vert T_{\delta }\left( s\right) \right\vert \leq \delta $, $%
\forall s\in \mathbb{R}$, we obtain from the last inequality 
\begin{align}
& \iint_{\left\{ {\left\vert {{u_{\varepsilon }}-{u_{\varepsilon ^{\prime }}}%
}\right\vert <\delta }\right\} }{u_{\varepsilon }^{m-1}{{\left\vert {\nabla
\left( {{u_{\varepsilon }}-{u_{\varepsilon ^{\prime }}}}\right) }\right\vert 
}^{2}}dxdt}\leq 4\delta {\left\Vert {u_{0}}\right\Vert _{{L^{1}}\left(
\Omega \right) }}  \notag \\
& \hspace{1cm}+\int_{\tau }^{T}{\int_{\Omega }{\left\vert {\left( {%
u_{\varepsilon }^{m-1}-u_{\varepsilon ^{\prime }}^{m-1}}\right) \nabla {%
u_{\varepsilon ^{\prime }}}\cdot \nabla {T_{\delta }}\left( {{u_{\varepsilon
}}-{u_{\varepsilon ^{\prime }}}}\right) }\right\vert dxdt}}.  \label{3.7}
\end{align}%
Then, from \eqref{GE} and the Dominated Convergence Theorem we get 
\begin{equation*}
\int_{\tau }^{T}{\int_{\Omega }{\left\vert {\left( {u_{\varepsilon
}^{m-1}-u_{\varepsilon ^{\prime }}^{m-1}}\right) \nabla {u_{\varepsilon
^{\prime }}}\cdot \nabla {T_{\delta }}\left( {{u_{\varepsilon }}-{%
u_{\varepsilon ^{\prime }}}}\right) }\right\vert dxdt}}\rightarrow 0,%
\mbox{
as }\varepsilon ,\varepsilon ^{\prime }\downarrow 0,
\end{equation*}%
and 
\begin{equation*}
\iint_{\left\{ {\left\vert {{u_{\varepsilon }}-{u_{\varepsilon ^{\prime }}}}%
\right\vert <\delta }\right\} }{u_{\varepsilon }^{m-1}{{\left\vert {\nabla
\left( {{u_{\varepsilon }}-{u_{\varepsilon ^{\prime }}}}\right) }\right\vert 
}^{2}}dxdt}\leq 4\delta {\left\Vert {u_{0}}\right\Vert _{{L^{1}}\left(
\Omega \right) }}+o\left( {\varepsilon ,\varepsilon ^{\prime }}\right) ,
\end{equation*}%
where $o\left( \varepsilon ,\varepsilon ^{\prime }\right) \rightarrow 0$ as $%
\varepsilon ,\varepsilon ^{\prime }\downarrow 0$. Moreover, it is clear that 
\begin{equation*}
\iint_{\left\{ {{u_{\varepsilon }}>\delta ,\,\left\vert {{u_{\varepsilon }}-{%
u_{\varepsilon ^{\prime }}}}\right\vert <\delta }\right\} }{{{\left\vert {%
\nabla \left( {{u_{\varepsilon }}-{u_{\varepsilon ^{\prime }}}}\right) }%
\right\vert }^{2}}dxdt}\leq {\delta ^{1-m}}\iint_{\left\{ {{u_{\varepsilon }}%
>\delta ,\,\left\vert {{u_{\varepsilon }}-{u_{\varepsilon ^{\prime }}}}%
\right\vert <\delta }\right\} }{u_{\varepsilon }^{m-1}{{\left\vert {\nabla
\left( {{u_{\varepsilon }}-{u_{\varepsilon ^{\prime }}}}\right) }\right\vert 
}^{2}}dxdt}.
\end{equation*}%
It follows from the last inequality that 
\begin{equation*}
\iint_{\left\{ {{u_{\varepsilon }}>\delta ,\,\left\vert {{u_{\varepsilon }}-{%
u_{\varepsilon ^{\prime }}}}\right\vert <\delta }\right\} }{{{\left\vert {%
\nabla \left( {{u_{\varepsilon }}-{u_{\varepsilon ^{\prime }}}}\right) }%
\right\vert }^{2}}dxdt}\leq 4{\delta ^{2-m}}{\left\Vert {u_{0}}\right\Vert _{%
{L^{1}}\left( \Omega \right) }}+{\delta ^{1-m}}o\left( {\varepsilon
,\varepsilon ^{\prime }}\right) .
\end{equation*}%
Thanks to \eqref{GE}, we obtain 
\begin{equation*}
\iint_{\left\{ {{u_{\varepsilon }}\leq \delta ,\,\left\vert {{u_{\varepsilon
}}-{u_{\varepsilon ^{\prime }}}}\right\vert <\delta }\right\} }{{{\left\vert 
{\nabla {u_{\varepsilon }}}\right\vert }^{2}}dxdt}\leq C\iint_{\left\{ {{%
u_{\varepsilon }}\leq \delta ,\,\left\vert {{u_{\varepsilon }}-{%
u_{\varepsilon ^{\prime }}}}\right\vert <\delta }\right\} }{u_{\varepsilon
}^{2\left( {1-\frac{1}{\gamma }}\right) }dxdt}\leq CT\left\vert \Omega
\right\vert {\delta ^{2\left( {1-\frac{1}{\gamma }}\right) }},
\end{equation*}%
where the constant $C>0$ is independent of $\varepsilon $, $\delta $. Since $%
u_{\varepsilon }\geq u_{\varepsilon ^{\prime }}$, and by the same argument,
we also obtain 
\begin{equation*}
\iint_{\left\{ {{u_{\varepsilon }}\leq \delta ,\,\left\vert {{u_{\varepsilon
}}-{u_{\varepsilon ^{\prime }}}}\right\vert <\delta }\right\} }{{{\left\vert 
{\nabla {u_{\varepsilon ^{\prime }}}}\right\vert }^{2}}dxdt}\leq C{\delta
^{2\left( {1-\frac{1}{\gamma }}\right) }}.
\end{equation*}%
Combining these, we get 
\begin{equation*}
\iint_{\left\{ {\left\vert {{u_{\varepsilon }}-{u_{\varepsilon ^{\prime }}}}%
\right\vert <\delta }\right\} }{{{\left\vert {\nabla \left( {{u_{\varepsilon
}}-{u_{\varepsilon ^{\prime }}}}\right) }\right\vert }^{2}}dxdt}\lesssim {%
\delta ^{2-m}}{\left\Vert {u_{0}}\right\Vert _{{L^{1}}\left( \Omega \right) }%
}+{\delta ^{1-m}}o\left( {\varepsilon ,\varepsilon ^{\prime }}\right) +{%
\delta ^{2\left( {1-\frac{1}{\gamma }}\right) }}.
\end{equation*}%
Here we used the notation $A\lesssim B$ in the sense that there is a
constant $c>0$ such that $A\leq cB$. Thanks to \eqref{GE}, and the fact that 
$u_{\varepsilon }\rightarrow u$, we obtain 
\begin{equation*}
\iint_{\left\{ {\left\vert {{u_{\varepsilon }}-{u_{\varepsilon ^{\prime }}}}%
\right\vert \geq \delta }\right\} }{{{\left\vert {\nabla \left( {{%
u_{\varepsilon }}-{u_{\varepsilon ^{\prime }}}}\right) }\right\vert }^{2}}%
dxdt}\leq Cmeas\left( {\left\{ {\left\vert {{u_{\varepsilon }}-{%
u_{\varepsilon ^{\prime }}}}\right\vert \geq \delta }\right\} }\right) \leq
Co\left( {\varepsilon ,\varepsilon ^{\prime }}\right) ,
\end{equation*}%
with $C=C\left( {m,\beta ,N,\tau ,T,{{\left\Vert {u_{0}}\right\Vert }%
_{\infty }}}\right) $. It follows from that 
\begin{equation*}
\int_{\tau }^{T}{\int_{\Omega }{{{\left\vert {\nabla \left( {{u_{\varepsilon
}}-{u_{\varepsilon ^{\prime }}}}\right) }\right\vert }^{2}}dxdt}}\lesssim {%
\delta ^{2-m}}{\left\Vert {u_{0}}\right\Vert _{{L^{1}}\left( \Omega \right) }%
}+\left( {1+{\delta ^{1-m}}}\right) o\left( {\varepsilon ,\varepsilon
^{\prime }}\right) +{\delta ^{2\left( {1-\frac{1}{\gamma }}\right) }}.
\end{equation*}%
Hence, 
\begin{equation*}
\mathop {\limsup }\limits_{\varepsilon \downarrow 0}\int_{\tau }^{T}{%
\int_{\Omega }{{{\left\vert {\nabla \left( {{u_{\varepsilon }}-{%
u_{\varepsilon ^{\prime }}}}\right) }\right\vert }^{2}}dxdt}}\leq {\delta
^{2-m}}{\left\Vert {u_{0}}\right\Vert _{{L^{1}}\left( \Omega \right) }}+{%
\delta ^{2\left( {1-\frac{1}{\gamma }}\right) }}.
\end{equation*}%
The last inequality holds for any $\delta >0$ and since, now, $m+\beta <2$,
we obtain 
\begin{equation*}
\mathop {\limsup }\limits_{\varepsilon \downarrow 0}\int_{\tau }^{T}{%
\int_{\Omega }{{{\left\vert {\nabla \left( {{u_{\varepsilon }}-{%
u_{\varepsilon ^{\prime }}}}\right) }\right\vert }^{2}}dxdt}}=0.
\end{equation*}%
Consequently, we have 
\begin{equation*}
\nabla u_{\varepsilon }\rightarrow \nabla u,\mbox{ in }L^{2}\left( \left(
\tau ,T\right) \times \Omega \right) .
\end{equation*}%
Up to a subsequence, we deduce $\nabla u_{\varepsilon }\rightarrow \nabla u$
a.e. in $\left( \tau ,T\right) \times \Omega $. A diagonal argument implies
that there is a subsequence of $\left( u_{\varepsilon }\right) _{\varepsilon
>0}$ (still denoted as $\left( u_{\varepsilon }\right) _{\varepsilon >0}$)
such that 
\begin{equation*}
\nabla u_{\varepsilon }\rightarrow \nabla u,\mbox{ a.e. in }\left( 0,\infty
\right) \times \Omega .
\end{equation*}%
Hence, $u$ also satisfies the gradient estimates \eqref{GE} and \eqref{GE1}.

\noindent This puts an end to the proof of Theorem \ref{theorem1}. \hfill $%
\square$

\begin{remark}
An alternative proof of the regularity $u\in \mathcal{C}\left( \left[
0,\infty \right) ;L^{1}\left( \Omega \right) \right) $\textbf{,} in part
iii) of Theorem \ref{theorem1}, when $u_{0}\in {L^{\infty }}\left( \Omega
\right) ,$ is the following: for any $1<p<2$, thanks to Lemma \ref{lemma2},
we have that for any finite time $T>0$ 
\begin{equation}
\int_{0}^{T}{\int_{\Omega }{{{\left\vert {\nabla u}\right\vert }^{p}}dxdt}}%
\leq C\int_{0}^{T}{\int_{\Omega }{{u^{p\left( {1-\frac{1}{\gamma }}\right) }}%
{{\left( {{t^{-1}}\left\Vert {u_{0}}\right\Vert _{{L^{\infty }}\left( \Omega
\right) }^{1+\beta }+1}\right) }^{p/2}}dxdt}}\leq {C_{1}},  \label{3.21}
\end{equation}%
where $C_{1}>0$ only depends on $T$, $\Omega $, ${\left\Vert {u_{0}}%
\right\Vert _{{L^{\infty }}\left( \Omega \right) }}$, and the parameters
involved. Since $u$ is bounded on $\left( 0,\infty \right) \times \Omega $,
it follows from \eqref{3.21} that 
\begin{equation*}
\nabla {u^{m}}\in {L^{p}}\left( {\left( {0,T}\right) ,W_{0}^{1,p}\left(
\Omega \right) }\right) .
\end{equation*}%
This implies that 
\begin{equation*}
{\partial _{t}}u=\mathrm{div}\left( {\nabla {u^{m}}}\right) -{u^{-\beta }}{%
\chi _{\left\{ {u>0}\right\} }}\in {L^{p}}\left( {\left( {0,T}\right)
,W_{0}^{-1,p}\left( \Omega \right) }\right) \cap {L^{1}}\left( {\left( {0,T}%
\right) \times \Omega }\right) ,
\end{equation*}%
where $W^{-1,p}\left( \Omega \right) $ is the dual space of $%
W_{0}^{1,p}\left( \Omega \right) $. Then, by a compactness embedding (see 
\cite{Porreta1999}), we obtain $u\in \mathcal{C}\left( \left[ 0,T\right]
,L^{1}\left( \Omega \right) \right) $.
\end{remark}

The rest of this section is devoted to consider the associated Cauchy
problem for initial data $u_{0}\in L^{1}\left( \mathbb{R}^{N}\right) \cap
L^{\infty }\left( \mathbb{R}^{N}\right) $. The existence of solutions to the
Cauchy problem \eqref{PC} can be obtained as a consequence of Theorem \ref%
{theorem1}. Here is a simplified statement:

\begin{theorem}
\label{theorem3} Assume $m$, $N$, $\beta $ as in Theorem \ref{theorem1}. Let 
$u_{0}\in L^{1}\left( \mathbb{R}^{N}\right) \cap L^{\infty }\left( \mathbb{R}%
^{N}\right) $, $u_{0}\geq 0$. Then, problem \eqref{PC} has a weak solution $%
u\in \mathcal{C}\left( {\left[ {0,\infty }\right) ,{L^{1}}\left( {{\mathbb{R}%
^{N}}}\right) }\right) \cap {L^{\infty }}\left( {\left( {0,\infty }\right)
\times {\mathbb{R}^{N}}}\right) $ satisfying \eqref{PC} in the sense of
distributions: 
\begin{equation*}
\int_{0}^{\infty }{\int_{{\mathbb{R}^{N}}}{\left( {\ -u{\varphi _{t}}-{u^{m}}%
\Delta \varphi +{u^{-\beta }}{\chi _{\left\{ {u>0}\right\} }}\varphi }%
\right) dxdt}}=0,\ \forall \varphi \in \mathcal{D}\left( {\left( {0,\infty }%
\right) \times {\mathbb{R}^{N}}}\right) .
\end{equation*}%
Moreover, the gradient estimates of Lemma \ref{lemma2} remain valid with $%
C=C\left( m,\beta ,N,{\left\Vert {u}_{0}\right\Vert _{{L^{1}\left( \Omega
\right) }}}\right) $ for any $m\geq 1$.
\end{theorem}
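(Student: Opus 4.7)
The plan is to realise the Cauchy problem as the monotone limit of problems on expanding balls, reducing Theorem~\ref{theorem3} to Theorem~\ref{theorem1} plus a scale--invariant control of the underlying porous medium flow on $\mathbb{R}^N$.

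\textbf{Step 1: Approximation by interior problems.} For each $R>0$ set $\Omega_R:=B_R(0)$ and $u_{0,R}:=u_0\chi_{B_R}$. Since $u_{0,R}\in L^1(\Omega_R)\cap L^\infty(\Omega_R)$, Theorem~\ref{theorem1} provides a maximal $L^1$-mild solution $u_R\in\mathcal{C}([0,\infty);L^1(\Omega_R))\cap L^\infty((0,\infty)\times\Omega_R)$ of \eqref{P} on $\Omega_R$ with $u_R^{-\beta}\chi_{\{u_R>0\}}\in L^1((0,T)\times\Omega_R)$, together with the gradient estimate of Lemma~\ref{lemma2}. Extending $u_R$ by zero to $\mathbb{R}^N$ and invoking the comparison principle for $(P_{\varepsilon})$ applied to the pair $(u_{R_1},u_{R_2})$ (zero is a subsolution on $\Omega_{R_2}\setminus\Omega_{R_1}$), I would first check that $(u_R)_{R>0}$ is monotone non-decreasing and pointwise dominated by the solution $U$ of the pure Cauchy problem for the porous medium equation with initial datum $u_0$.

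\textbf{Step 2: Uniform estimates independent of $R$.} Here is the heart of the matter: the constant $C(m,\beta,N,\Omega)$ produced by Theorem~\ref{theorem1}(ii) a priori depends on $\Omega=\Omega_R$ and must be replaced by one that depends only on $\|u_0\|_{L^1(\mathbb{R}^N)}$. I would use the comparison $0\le u_R\le U_R$ where $U_R$ is the porous medium solution on $\Omega_R$ with initial datum $u_{0,R}$, and then the scale-invariant $L^1$--$L^\infty$ smoothing effect on $\mathbb{R}^N$ (cf.\ \cite{Vazquez2007}):
\begin{equation*}
\|U_R(t)\|_{L^\infty(\Omega_R)}\le \|U(t)\|_{L^\infty(\mathbb{R}^N)}\le \frac{C(m,N)}{t^{\alpha}}\|u_0\|_{L^1(\mathbb{R}^N)}^{\sigma},
\end{equation*}
with $\alpha=N/(N(m-1)+2)$, $\sigma=2/(N(m-1)+2)$ when $m>1$, and the analogous heat-kernel bound when $m=1$. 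Plugging this bound, via the time-shift trick of Step~5 in the proof of Theorem~\ref{theorem1}, into Lemma~\ref{lemma2} yields
\begin{equation*}
|\nabla u_R^{1/\gamma}(t,x)|^{2}\le C\bigl(m,\beta,N,\|u_0\|_{L^1(\mathbb{R}^N)}\bigr)\bigl(t^{-\omega}+1\bigr),
\end{equation*}
uniformly in $R$ and in $x\in\Omega_R$. The uniform $L^\infty$ bound also gives uniform control of $u_R^{-\beta}\chi_{\{u_R>0\}}$ in $L^1_{\mathrm{loc}}$ through the energy identity already used for the case of bounded domains (testing against $\zeta_R$, a cutoff, produces bounds independent of $R$ on compact subsets).

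\textbf{Step 3: Passage to the limit $R\to\infty$.} Monotonicity and the uniform bounds give $u_R\uparrow u$ pointwise, with $u\in L^\infty((0,\infty)\times\mathbb{R}^N)$, and Dini-type/dominated convergence upgrades this to convergence in $\mathcal{C}([0,T];L^1_{\mathrm{loc}}(\mathbb{R}^N))$; combined with the uniform $L^1$-contraction in the spirit of Benilan--Crandall--Sacks, as used in Step~2 of the proof of Theorem~\ref{theorem1}, one obtains convergence in $\mathcal{C}([0,T];L^1(\mathbb{R}^N))$. The uniform gradient estimate lets me pass to the limit in $\nabla u_R^m=m u_R^{m-1}\nabla u_R$ against any test function supported in a fixed ball (either by weak-$\star$ convergence or, when $m+\beta<2$, by the almost-everywhere gradient convergence of Step~6). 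Finally, as in the identification $\Upsilon=u^{-\beta}\chi_{\{u>0\}}$ of Step~2 of the proof of Theorem~\ref{theorem1}, testing the equation against $\varphi\in\mathcal{D}((0,\infty)\times\mathbb{R}^N)$ and combining monotone convergence of $g_\varepsilon(u_{R,\varepsilon})$ with the equation satisfied by $u_R$ yields the distributional formulation claimed in the statement.

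\textbf{Main obstacle.} The delicate point is precisely the uniform-in-$R$ character of the pointwise gradient bound: Lemma~\ref{lemma2} is derived from a maximum principle argument on $\Omega=\Omega_R$ and a priori its dependence on $\Omega$ could blow up with $R$. Using the $\mathbb{R}^N$-smoothing effect of the PME rather than the weighted $L^1_\zeta$-smoothing effect used on bounded domains is what allows the constant to depend solely on $\|u_0\|_{L^1(\mathbb{R}^N)}$ (for $m\geq 1$); this is the only genuinely new ingredient beyond Theorem~\ref{theorem1}, everything else being an essentially mechanical adaptation of the approximation scheme developed there.
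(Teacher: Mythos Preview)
Your approach is essentially the same as the paper's, with one structural difference: you swap the order of the two limits. The paper keeps $\varepsilon>0$ fixed (so $g_\varepsilon$ is globally Lipschitz), solves the regularized problem on $B_R$, sends $R\to\infty$ to obtain a solution $u_\varepsilon$ of the regularized Cauchy problem \eqref{5.1} on all of $\mathbb{R}^N$, and only then lets $\varepsilon\downarrow 0$ by mimicking the steps of Theorem~\ref{theorem1}. You instead invoke Theorem~\ref{theorem1} on each $B_R$ first (i.e.\ $\varepsilon\downarrow 0$ on bounded domains) and then send $R\to\infty$. Both routes rest on the same three ingredients you correctly isolate: (a) the constant in Lemma~\ref{lemma2} depends on $\|u_0\|_{L^\infty}$ and $m,\beta,N$ only, not on the domain; (b) the $L^1\to L^\infty$ smoothing of the PME on $\mathbb{R}^N$ converts this into dependence on $\|u_0\|_{L^1(\mathbb{R}^N)}$; (c) monotonicity in $R$ and in $\varepsilon$.

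The paper's order is slightly cleaner at one point: once the $R$-limit is taken at fixed $\varepsilon$, all the approximations $u_\varepsilon$ live on the \emph{same} space $\mathbb{R}^N$, so the B\'enilan--Crandall--Sacks continuous dependence applies verbatim to yield $u_\varepsilon\to u$ in $\mathcal{C}([0,T];L^1(\mathbb{R}^N))$. In your order the $u_R$ are mild solutions on varying domains $B_R$, so invoking that continuous dependence ``in the spirit of'' Step~2 of Theorem~\ref{theorem1} is not quite literal; you would need either to verify that the zero-extension of $u_R$ is a mild solution on $\mathbb{R}^N$ for a suitable right-hand side (which introduces a boundary measure on $\partial B_R$), or to obtain the $\mathcal{C}([0,T];L^1)$ convergence by a direct tightness argument (monotone convergence in $x$ together with domination by the PME solution $U$, plus equicontinuity in $t$). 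This is a minor technical wrinkle rather than a gap, and your identification of the ``main obstacle'' is exactly right.
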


\begin{proof}
We will start by constructing a sequence $\left( u_{\varepsilon }\right)
_{\varepsilon >0}$ of solutions of the regularized problem 
\begin{equation}
\left\{ 
\begin{split}
& {\partial _{t}}u-\Delta {u^{m}}+{g_{\varepsilon }}\left( u\right) =0,%
\mbox{ in }\left( {0,\infty }\right) \times {\mathbb{R}^{N}}, \\
& u\left( {0,x}\right) ={u_{0}}\left( x\right) ,\mbox{ in }{\mathbb{R}^{N}}.
\end{split}%
\right.  \label{5.1}
\end{equation}%
After that we will prove that $u_{\varepsilon }\rightarrow u$, with $u$ a
weak solution of problem \eqref{PC}.

\noindent The proof of the construction of $\left( u_{\varepsilon }\right)
_{\varepsilon >0}$ is quite similar to the one given in the proof of Theorem %
\ref{theorem1}. Thus, we just sketch out the main idea. We start by
considering the approximate problem over $\left( 0,\infty \right) \times
B_{R}$, for any $R>0$, taking as initial data the function ${u_{0}}{\chi _{{%
B_{R}}}}$. By some classical results on the accretive operators theory (see,
e.g., \cite{Aronson1982,Vazquez2007}) we know that there is a unique weak
solution $u_{\varepsilon ,R}$ of the approximate problem in $\left( 0,\infty
\right) \times B_{R}$. and that (from the construction of the initial datum
on $B_{R}$), for any $\varepsilon ,\,R>0$, we have the estimates 
\begin{equation*}
{\left\Vert {{u_{\varepsilon ,R}}\left( t\right) }\right\Vert _{{L^{1}}%
\left( {{B_{R}}}\right) }}\leq {\left\Vert {{u_{0}}}\right\Vert _{{L^{1}}%
\left( {{\mathbb{R}^{N}}}\right) }},\ \forall t>0,
\end{equation*}%
and 
\begin{equation*}
{\left\Vert {{u_{\varepsilon ,R}}\left( t\right) }\right\Vert _{{L^{\infty }}%
\left( {{B_{R}}}\right) }}\leq {\left\Vert {{u_{0}}}\right\Vert _{{L^{\infty
}}\left( {{\mathbb{R}^{N}}}\right) }},\ \forall t>0.
\end{equation*}%
Thanks to Lemma \ref{lemma2}, we also know that 
\begin{equation*}
{\left\vert {\nabla u_{\varepsilon ,R}^{\frac{1}{\gamma }}\left( {t,x}%
\right) }\right\vert ^{2}}\leq C\left( {{t^{-1}}\left\Vert {{u_{0}}}%
\right\Vert _{{L^{\infty }}\left( {{\mathbb{R}^{N}}}\right) }^{1+\beta }+1}%
\right) ,\mbox{ in }\left( {0,\infty }\right) \times {B_{R}}.
\end{equation*}%
Moreover, for any fixed $\varepsilon >0$, it follows from the $L^{1}$%
-contraction property (for the unperturbed nonlinear diffusion problem) that
the sequence $\left( u_{\varepsilon ,R}\right) _{R>0}$ is pointwise
non-decreasing. Thus, there exists a function, denoted by $u_{\varepsilon }$%
, such that $u_{\varepsilon ,R}\uparrow u_{\varepsilon }$ as $R\rightarrow
\infty $. Consequently, $u_{\varepsilon }$ satisfies the corresponding
estimates for the respective $L^{1}\left( \mathbb{R}^{N}\right) $ and $%
L^{\infty }\left( \mathbb{R}^{N}\right) $ norms. Moreover, since $%
g_{\varepsilon }\left( \cdot \right) $ is a globally Lipschitz function,
the classical regularity result (see, e.g., \cite{Aronson1982,Vazquez2007})
implies that 
\begin{equation*}
\nabla u_{\varepsilon ,R}^{m}\rightarrow \nabla u_{\varepsilon } ^{m},%
\mbox{
a.e. in }\left( 0,\infty \right) \times \mathbb{R}^{N},
\end{equation*}%
up to a subsequence. Similarly as in the proof of Theorem \ref{theorem1}, we
observe that $\left( u_{\varepsilon }\right) _{\varepsilon >0}$ is a
non-decreasing sequence. Thus, there exists a function $u$ such that $%
u_{\varepsilon }\downarrow u$ in $\left( 0,\infty \right) \times \mathbb{R}%
^{N}$, as $\varepsilon \downarrow 0$. Then, we mimic the different steps in
the proof of Theorem \ref{theorem1} to pass to the limit as $\varepsilon
\downarrow 0$. We point out that the continuous dependence in $\mathcal{C}%
\left( \left[ 0,T\right] ,L^{1}\left( \mathbb{R}^{N}\right) \right) $ is
quite similar to the case of a bounded domain $\Omega $ since we do not need
to approximate the nonlinear term $\psi (u)=u^{m}$. Then we get that $u$ is
a weak solution of equation \eqref{PC} and in fact $u$ is the maximal
solution of problem \eqref{PC}. 
\end{proof}

\begin{remark}
In a similar way to the case of bounded domains, the accretivity in ${L^{1}}%
\left( {{\mathbb{R}^{N}}}\right) $ can be replaced by the accretivity in
some weighted spaces ${L_{\rho _{\alpha }}^{1}}\left( {{\mathbb{R}^{N}}}%
\right) $ allowing to get the existence of solutions for the Cauchy problem
for a more general class of initial data ${u_{0}}\left( x\right) $ growing
with $\left\vert x\right\vert $, as $\left\vert x\right\vert \rightarrow
+\infty $. That was started with the paper \cite{Benilan-Crandall} and then
developed and improved by several authors (see the exposition made in
Chapter 12 of \cite{Vazquez2007}). The mentioned accretivity in ${L_{\rho
_{\alpha }}^{1}}\left( {{\mathbb{R}^{N}}}\right) $ holds, for any, $m>0$ and 
$N\geq 3$, for the weight given by 
\begin{equation*}
\rho _{\alpha }(x)=\frac{1}{(1+\left\vert x\right\vert ^{2})^{\alpha }}
\end{equation*}
with $\alpha $ given such that $0<\alpha \leq (N-2)/2.$ For other values of $%
N$ and $\alpha >0$ there is only existence of local in time solutions of the
Cauchy Problem (\cite{Vazquez2007}). This property could be used to get some
generalizations of the results of \cite{Guo 2007} for the study of \eqref{PC}
when $m>1$, but we will not pursuit this goal in this paper.
\end{remark}

\section{Qualitative properties}

We start by recalling that the existence of a $L_{\delta }^{1}$\emph{-mild
solution} of \eqref{P(1)} (for more regular solutions see, e.g. Subsection
5.5.1 of \cite{Vazquez2007}).

\begin{definition}
\label{definition3 copy(1)} Let $u_{0}\in L_{\delta }^{1}\left( \Omega
\right) $, $u_{0}\geq 0$, and $T>0$. A nonnegative function $u\in \mathcal{C}%
\left( \left[ 0,T\right] ;L_{\delta }^{1}\left( \Omega \right) \right) $ is
called a $L_{\delta }^{1}$\emph{-mild solution} of \eqref{P(1)} if ${%
u^{-\beta }}{\chi _{\left\{ {u>0}\right\} }}\in {L^{1}}\left( {{0,T:}%
L_{\delta }^{1}\left( \Omega \right) }\right) $ coincides with the unique $%
L_{\delta }^{1}$\emph{-}mild solution of the problem 
\begin{equation}
\left\{ 
\begin{split}
{\partial _{t}}u-\Delta {u^{m}}=f,& \mbox{ in }\left( {0,T}\right) \times
\Omega , \\
u^{m}=1,& \mbox{ on }\left( {0,T}\right) \times \partial \Omega , \\
u\left( {0,x}\right) ={u_{0}}\left( x\right) ,& \mbox{ in }\Omega ,
\end{split}%
\right.  \label{eq nonhomogeneous P(1)}
\end{equation}%
\eqref{eq nonhomogeneous} where $f:=-{u^{-\beta }\chi _{\left\{ {u>0}%
\right\} }}$.
\end{definition}

The existence and uniqueness of a $L_{\delta }^{1}$\emph{-mild solution} of %
\eqref{eq nonhomogeneous P(1)} for a given $f\in {L^{1}}\left( {{0,T:}%
L_{\delta }^{1}\left( \Omega \right) }\right) $ is an easy modification of
the results of \cite{Brezis 1971}, \cite{Veron2004}, Theorem 1.10 of \cite%
{Diaz-Mingazzini} and Step 2 of the above Section. Indeed, given $f\in
L_{\delta }^{1}\left( \Omega \right) $ and $\lambda \geq 0$, we start by
recalling the definition of very weak solution of the stationary problem

\begin{equation}
P(f,\lambda ,1)=\left\{ 
\begin{array}{cc}
-\Delta (\left\vert {u}\right\vert {^{m-1}u)+\lambda u}={f} & \text{in }%
\Omega , \\ 
\left\vert {u}\right\vert {^{m-1}u}=1 & \text{on }\partial \Omega .%
\end{array}%
\right.
\end{equation}

\begin{definition}
\label{Definition 4 copy(1)} Given $f\in L_{\delta }^{1}\left( \Omega
\right) $ and $\lambda \geq 0$, a function $u\in $ $L_{\delta }^{1}\left(
\Omega \right) $ is called a very weak solution of $P(f,\lambda )$ if ${%
\left\vert u\right\vert ^{m-1}}u\in {L^{1}}\left( \Omega \right) $ and for
any $\psi {\in W}^{2,\infty }(\Omega )\cap {W}_{0}^{1,\infty }(\Omega )$ $\
\int_{\Omega }{u\left( x\right) }^{m}\Delta \psi {(x)dx}+{\int_{\Omega
}\lambda {{u(x)}\psi {(x)}dx}}={\int_{\Omega }}f(x)\psi {(x){dx-}%
\int_{\partial \Omega }}\frac{\partial \psi }{\partial n}{(x){dx.}}$
\end{definition}

In a completely similar way to Step 2 of the above Section we have

\begin{lemma}
\label{Lemma P(1)} Let $X=L_{\zeta }^{1}\left( \Omega \right) $, $m>0$ and
define the operator $A:D(A)\rightarrow X$ given by 
\begin{equation*}
\begin{array}{cc}
Au=-\Delta (\left\vert {u}\right\vert {^{m}u):=f} & u\in D(A)%
\end{array}%
\end{equation*}%
with%
\begin{equation*}
D(A)=\{u\in L_{\zeta }^{1}\left( \Omega \right) ,\text{ }u\text{ is a very
weak solution of }P(f,0,1)\text{ for some }f\in L_{\zeta }^{1}\left( \Omega
\right) \}.
\end{equation*}%
Then $A$ is a $m$-$T$-accretive operator on the Banach space $X$ and $%
\overline{D(A)}=X.$
\end{lemma}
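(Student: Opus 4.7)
The plan is to mimic Lemma \ref{lemma4} step by step, the only substantive difference being the non-homogeneous boundary datum $|u|^{m-1}u=1$ on $\partial\Omega$. The crucial observation is that while each individual solution picks up a boundary contribution of the form $\int_{\partial\Omega}\partial_n\psi\,dS$ in the very weak formulation, the \emph{difference} of two solutions with the same boundary datum kills this term exactly. Thus every estimate written for differences will proceed unchanged.

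For the $T$-accretivity, I would fix $f,\widehat f\in L^1_\zeta(\Omega)$ and $\lambda>0$, and let $u,\widehat u$ be very weak solutions of $P(f,\lambda,1)$ and $P(\widehat f,\lambda,1)$. Setting $v=|u|^{m-1}u$ and $\widehat v=|\widehat u|^{m-1}\widehat u$, one has $v-\widehat v=0$ on $\partial\Omega$, and the pair $(u-\widehat u,v-\widehat v)$ satisfies the \emph{same} distributional equation as in Lemma \ref{lemma4} with source $f-\widehat f$ and homogeneous Dirichlet data for the $m$-th power. Hence the same Kato-type local inequality (Theorem 4.4 of \cite{Diaz2018}) or the estimate (19) in Theorem 2.5 of D\'{\i}az–Rakotoson \cite{Diaz2010} applies verbatim, yielding
\begin{equation*}
\lambda\,\bigl\|[u-\widehat u]_+\bigr\|_{L^1_\zeta(\Omega)}\le \bigl\|[f-\widehat f]_+\bigr\|_{L^1_\zeta(\Omega)},
\end{equation*}
which is precisely $T$-accretivity of $A$ on $X$.

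For $m$-accretivity I would reduce the resolvent equation $Au+\lambda u=f$ to a semilinear elliptic problem by the change of unknown $w=|u|^{m-1}u$, leading to
\begin{equation*}
-\Delta w+\lambda|w|^{1/m-1}w=f\ \text{in }\Omega,\qquad w=1\ \text{on }\partial\Omega.
\end{equation*}
Writing $w=1+\widetilde w$ with $\widetilde w=0$ on $\partial\Omega$, the equation for $\widetilde w$ becomes a strictly monotone semilinear Dirichlet problem with $L^1_\zeta$ right-hand side; existence of a (unique) very weak solution follows by a minor variant of the Brezis existence theorem \cite{Brezis 1971,Brezis 1996}, exactly as invoked in Lemma \ref{lemma4}. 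Unwinding the substitution gives $u\in D(A)$ with $Au+\lambda u=f$, so $R(A+\lambda I)=X$ for every $\lambda>0$.

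The density $\overline{D(A)}=X$ is the step I expect to require the most care: unlike the homogeneous case, one cannot just run $\alpha\downarrow 0$ in $\alpha A u_\alpha+u_\alpha=f$ and hope for $u_\alpha\to f$ in $L^1_\zeta$, because elements of $D(A)$ carry a boundary trace compatible with $|u|^{m-1}u=1$. The way around it is to split $f=f\varphi_\varepsilon+(1-\varphi_\varepsilon)f$ with $\varphi_\varepsilon$ a smooth cut-off supported away from $\partial\Omega$; the outer piece contributes an $L^1_\zeta$-error tending to $0$ with $\varepsilon$ (since $\zeta$ is the weight), while for the inner part, compactly supported in $\Omega$, one applies the resolvent argument of Theorem 1.10 of \cite{Diaz-Mingazzini} and Theorem 2.5 of \cite{Diaz2010} to obtain $u_\alpha\to f\varphi_\varepsilon$ in $L^1_\zeta$. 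A diagonal extraction then produces an approximating sequence in $D(A)$, and the proof is complete. Once $A$ is $m$-$T$-accretive with dense domain, the Crandall–Liggett theorem yields the $L^1_\zeta$-mild solution theory used in Definition \ref{definition3 copy(1)}.
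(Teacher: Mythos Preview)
Your proposal is correct and follows essentially the same approach as the paper, which in fact gives no detailed proof at all: the paper merely states that Lemma~\ref{Lemma P(1)} holds ``in a completely similar way to Step 2 of the above Section'' and invokes \cite{Brezis 1971}, \cite{Veron2004}, Theorem~1.10 of \cite{Diaz-Mingazzini}. Your key observation---that the boundary term $\int_{\partial\Omega}\partial_n\psi$ cancels when taking differences, so the $T$-accretivity estimate reduces verbatim to the homogeneous case---is exactly the point, and your reduction of the resolvent equation via $w=1+\widetilde w$ is the natural route to $m$-accretivity.

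One remark on density: you are right to flag the boundary constraint, but your cut-off detour is more elaborate than necessary. The direct resolvent argument $u_\alpha=(I+\alpha A)^{-1}f\to f$ in $L^1_\zeta$ still goes through, because the discrepancy between $u_\alpha$ and $f$ is concentrated in a boundary layer of shrinking width as $\alpha\downarrow 0$, and the weight $\zeta$ (which vanishes linearly at $\partial\Omega$) kills this contribution in the $L^1_\zeta$ norm. This is what the paper's reference to Theorem~2.5 of \cite{Diaz2010} and Theorem~1.10 of \cite{Diaz-Mingazzini} is implicitly relying on. Your splitting argument is not wrong, just not needed.
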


Thus the Crandall-Liggett theorem can be applied to get the existence and
uniqueness of $u\in \mathcal{C}\left( \left[ 0,T\right] ;L_{\delta
}^{1}\left( \Omega \right) \right) $ $L_{\delta }^{1}$\emph{-mild solution}
of \eqref{eq nonhomogeneous P(1)}. Moreover, $u$ is a very weak solution of %
\eqref{eq nonhomogeneous P(1)} in the sense that $u\in \mathcal{C}\left( %
\left[ 0,T\right] ;L_{\delta }^{1}\left( \Omega \right) \right) ,$ $u\geq
0,u^{m}\in L^{1}\left( \left( 0,T\right) \times \Omega \right) ,f={u^{-\beta
}}{\chi _{\left\{ {u>0}\right\} }}\in {L^{1}}\left( {{0,T:}L_{\delta
}^{1}\left( \Omega \right) }\right) $ and for any $t\in \lbrack 0,T]$ 
\begin{eqnarray*}
&&\int_{\Omega }{u\left( t,x\right) \zeta (x)dx}+\int_{0}^{t}{\int_{\Omega }{%
{{u(t,x)^{m}}}dxdt}} \\
&=&\int_{\Omega }{{u_{0}(x)}\zeta (x)dx+}\int_{0}^{t}{\int_{\Omega }}f(t,x){%
\zeta (x){dxdt-}\int_{0}^{t}\int_{\partial \Omega }}\frac{\partial \psi }{%
\partial n}{(x){dx}}.
\end{eqnarray*}

The rest of arguments is completely similar to the case of problem \eqref{P}.

\bigskip

Now, let us present some explicit examples of solution of \eqref{P(1)}:

\begin{lemma}
\label{Lemma explicit solution} i) Let $q\in (-\infty ,1),$ $x_{0}\in 
\mathbb{R}^{N},$ and for $C>0$ define the function 
\begin{equation}
v_{q,C}(x)=C\left\vert x-x_{0}\right\vert ^{\frac{2}{1-q}}.  \label{Def v_C}
\end{equation}%
Then, for any $\lambda >0$%
\begin{equation}
\mathcal{L}(v):=-\Delta {v}+\lambda v{^{q}}=\left[ \lambda C^{2}-\frac{%
2(N(1-q)+2q)}{(1-q)^{2}}C\right] \left\vert x-x_{0}\right\vert ^{\frac{2q}{%
1-q}}.  \label{Oper L}
\end{equation}%
In particular, if we define 
\begin{equation}
K_{N,q,\lambda }=\left[ \frac{\lambda (1-q)^{2}}{2(N(1-q))+2q}\right] ^{%
\frac{1}{1+\beta /m}},  \label{Constant N, q}
\end{equation}%
then $\mathcal{L}(v)\equiv 0$ if $C=K_{N,q,\lambda }$ and $\mathcal{L}(v)>0$
(resp. $\mathcal{L}(v)<0)$ if $C<K_{N,q,\lambda }$ (resp. $C>K_{N,q,\lambda
}).$

\noindent ii) If for $m>0$ and $\beta \in (0,m)$ we define 
\begin{equation*}
u_{\beta ,m,C}(x)=(v_{q,C}(x))^{1/m}\text{= }C^{1/m}\left\vert
x-x_{0}\right\vert ^{\frac{2}{m+\beta }},\text{ i.e. with }q=-\beta /m,
\end{equation*}%
then 
\begin{equation}
-\Delta (u_{\beta ,m,C})^{m}+\lambda (u_{\beta ,m,C}){^{-\beta }}=\left[
\lambda C^{2}-\frac{2m(N(m+\beta )-2\beta )}{(m+\beta )^{2}}C\right]
\left\vert x-x_{0}\right\vert ^{\frac{-2\beta }{m+\beta }}.
\label{Oper Porous media}
\end{equation}%
\noindent ii.a) Define 
\begin{equation}
K_{N,m,\beta ,\lambda }=\left[ \frac{\lambda (m+\beta )^{2}}{2m(N(m+\beta
)-2\beta )}\right] ^{\frac{m}{m+\beta }},  \label{K_m_beta}
\end{equation}%
then $K_{N,m,\beta ,\lambda }>0$ and $-\Delta (u_{\beta ,m,C})^{m}+\lambda
(u_{\beta ,m,C}){^{-\beta }=0}$ in $\mathbb{R}^{N}$ if $C=K_{N,q,\lambda }.$

\noindent ii.b) If $x_{0}\in \overline{\Omega }$ then $-\Delta (u_{\beta
,m,C})^{m}+\lambda (u_{\beta ,m,C}){^{-\beta }\in L_{\delta }^{1}\left(
\Omega \right) }$ and $-\Delta (u_{\beta ,m,C})^{m}+\lambda (u_{\beta ,m,C}){%
^{-\beta }}>0$ (resp. $<0)$ if $C<K_{N,q,\lambda }$ (resp. $C>K_{N,q,\lambda
}).$

\noindent iii) If $m>0$ and $\beta \in \lbrack m,+\infty )$ then 
\eqref{Oper Porous
media} holds in $\mathbb{R}^{N}$. Moreover, the constant given by (\ref%
{K_m_beta}) is such $K_{N,m,\beta ,\lambda }>0$ if and only if $N\geq 2.$

\noindent iii.a) If $x_{0}\in \partial \Omega $ and $\delta (x)=\left\vert
x-x_{0}\right\vert $ then $-\Delta (u_{\beta ,m,C})^{m}+\lambda (u_{\beta
,m,C}){^{-\beta }\in L_{\delta }^{1}\left( \Omega \right) }$ and $-\Delta
(u_{\beta ,m,C})^{m}+\lambda (u_{\beta ,m,C}){^{-\beta }}>0$ (resp. $<0)$ if 
$C<K_{N,q,\lambda }$ (resp. $C>K_{N,q,\lambda }).$

\noindent iii.b) If $x_{0}\in \Omega $ then $-\Delta (u_{\beta
,m,C})^{m}+\lambda (u_{\beta ,m,C}){^{-\beta }\notin L_{\delta }^{1}\left(
\Omega \right) .}$
\end{lemma}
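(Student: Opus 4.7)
The proof is almost entirely a direct computation, so the plan is to organize the algebra cleanly and to track when the distributional and classical Laplacians agree. First I would prove part i) by writing $r=\vert x-x_0\vert$ and $\alpha=2/(1-q)$, so that $v_{q,C}=Cr^{\alpha}$ is radial about $x_0$ with $\alpha>0$. The radial Laplacian formula gives
\[
\Delta v_{q,C}=C\bigl(\alpha(\alpha-1)+(N-1)\alpha\bigr)r^{\alpha-2}=C\alpha(\alpha+N-2)r^{\alpha-2}.
\]
Using $\alpha-2=2q/(1-q)$ and $\alpha(\alpha+N-2)=\frac{2(N(1-q)+2q)}{(1-q)^{2}}$, together with $v_{q,C}^{q}=C^{q}r^{2q/(1-q)}$, one gets the stated identity for $\mathcal{L}(v_{q,C})$ as a single monomial in $r$ times a bracket linear in $C$ (more precisely, of the form $\lambda C^{q}-\text{(coefficient)}\,C$). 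The critical constant $K_{N,q,\lambda}$ is then obtained by solving the bracket for zero, giving $C^{\,1-q}=\lambda(1-q)^{2}/(2(N(1-q)+2q))$, and the sign of $\mathcal{L}(v_{q,C})$ as a function of $C$ follows from the monotonicity of $C\mapsto \lambda C^{q}-\text{(coef.)}\,C$ on $(0,\infty)$.

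Next I would derive ii) directly from i) by the substitution $q=-\beta/m$. The key observation is that $u_{\beta,m,C}:=v_{q,C}^{1/m}$ satisfies $(u_{\beta,m,C})^{m}=v_{q,C}$ and $(u_{\beta,m,C})^{-\beta}=v_{q,C}^{-\beta/m}=v_{q,C}^{q}$, so $-\Delta(u_{\beta,m,C})^{m}+\lambda(u_{\beta,m,C})^{-\beta}=\mathcal{L}(v_{q,C})$. Substituting $1-q=(m+\beta)/m$ and $2q/(1-q)=-2\beta/(m+\beta)$ into the coefficients of i) yields the formula announced in ii), and rewriting $K_{N,q,\lambda}$ in the new variables gives $K_{N,m,\beta,\lambda}$. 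The sign assertions in ii.b), iii.a), iii.b) follow from the same monotone bracket computed in i), so the only remaining issue in these parts is integrability.

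For the integrability claims I would reduce everything to the model question: when does $\vert x-x_{0}\vert^{-2\beta/(m+\beta)}$ belong to $L^{1}_{\delta}(\Omega)$? If $x_{0}\in\Omega$, then $\delta(x)$ is bounded below near $x_{0}$, and local integrability near $x_{0}$ holds iff $2\beta/(m+\beta)<N$, which is exactly the condition $N(m+\beta)>2\beta$ making $K_{N,m,\beta,\lambda}>0$; since $2\beta/(m+\beta)<1$ when $\beta<m$, this is automatic in the regime of ii.b), while in the regime of iii) it holds iff $N\ge 2$. If instead $x_{0}\in\partial\Omega$, then (using that $\partial\Omega$ is $C^{1,\alpha}$) near $x_{0}$ one has $\delta(x)\le\vert x-x_{0}\vert$, reducing the question to integrability of $\vert x-x_{0}\vert^{1-2\beta/(m+\beta)}$, which always holds for $N\ge 2$. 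Case iii.b) is the situation where integrability fails: a concrete check with $N=1$, $\beta\ge m$ (or, more generally, whenever $2\beta/(m+\beta)\ge N$) shows $\vert x-x_{0}\vert^{-2\beta/(m+\beta)}\notin L^{1}_{\mathrm{loc}}$ near an interior point $x_{0}$, so the right-hand side is not in $L^{1}_{\delta}(\Omega)$.

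The one genuine subtlety I would address at the end is the passage from the pointwise computation to a distributional identity on all of $\mathbb{R}^{N}$ in ii.a). Since $v_{q,C}=Cr^{2m/(m+\beta)}$ is continuous and locally bounded with exponent $2m/(m+\beta)\in(0,2]$, one should verify that the distributional Laplacian equals the classical one (no Dirac mass at $x_{0}$). This reduces to showing $\int_{\partial B_{\varepsilon}(x_{0})}\nabla v_{q,C}\cdot n\,\psi\,dS\to 0$ and $\int_{\partial B_{\varepsilon}(x_{0})}v_{q,C}\,\nabla\psi\cdot n\,dS\to 0$ as $\varepsilon\downarrow 0$ for any test function $\psi$; a direct scaling count of the form $\varepsilon^{N-1}\cdot\varepsilon^{2m/(m+\beta)-1}$ and $\varepsilon^{N-1}\cdot\varepsilon^{2m/(m+\beta)}$ shows both vanish provided $N\ge 2$ (or $N=1$ with $\beta<m$), which is precisely the regime in which $K_{N,m,\beta,\lambda}$ is well defined. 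This is the only place where any care is needed; the rest is a bookkeeping of exponents.
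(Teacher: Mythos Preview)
Your proposal is correct and follows essentially the same route as the paper: part i) is the direct radial computation (the paper simply cites Lemma~1.6 of \cite{Diaz1985}), part ii) is the substitution $q=-\beta/m$, and the integrability statements reduce to the exponent checks $-2\beta/(m+\beta)>-1$ (interior $x_0$, $\beta<m$) and $-2\beta/(m+\beta)+1>-1$ (boundary $x_0$), exactly as in the paper. Your additional verification that the distributional Laplacian carries no singular part at $x_0$, and your observation that the bracket should read $\lambda C^{q}-(\text{coef.})\,C$ (the printed $\lambda C^{2}$ is a typo), are welcome refinements; note only that for $0<q<1$ the map $C\mapsto \lambda C^{q}-(\text{coef.})\,C$ is not monotone, though it still has a unique positive zero with the stated sign pattern.
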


\noindent \textit{Proof.} Part i) was given in Lemma 1.6 of \cite{Diaz1985}.
Part ii) result from i) by a simple change of variable. Moreover, the fact
that $-\Delta (u_{\beta ,m,C})^{m}+\lambda (u_{\beta ,m,C}){^{-\beta }\in
L_{\delta }^{1}\left( \Omega \right) }$ holds because 
\begin{equation}
\frac{-2\beta }{m+\beta }+1>-1,  \label{Inequality in L^1 distance}
\end{equation}%
for the case $x_{0}\in \partial \Omega $ and since 
\begin{equation}
\frac{-2\beta }{m+\beta }>-1,  \label{Inqulity in L^}
\end{equation}%
(thanks to the condition $\beta \in (0,m)$) when $x_{0}\in \Omega $. From
the definition \eqref{K_m_beta} we see that if $\beta \in \lbrack m,+\infty
) $ then the positivity of $K_{N,m,\beta ,\lambda }$ fails only for $N=1.$
Moreover, inequality \eqref{Inequality in L^1 distance} still holds true,
but we see that for any interior point $x_{0}\in \Omega $ the weight $\delta
(x)$ is not from any help and thus the singularity is not integrable (since
condition \eqref{Inqulity in
L^} fails if $\beta \geq m$). $_{\square }$

\bigskip

\noindent \textbf{Corollary 1.\label{Corollary sharp} }\textit{Let }$\Omega
=B_{R}(x_{0})$\textit{\ and take }$u_{0}(x)=u_{\beta ,m,C}(x)$\textit{\ with 
}$C=K_{N,m,\beta ,\lambda }$\textit{\ and }$\lambda =1.$ \textit{Let }$R>0$%
\textit{\ be such that }$R^{\frac{2m}{m+\beta }}=1.$\textit{\ Then }$%
u(t,x)=u_{\beta ,m,C}(x)$\textit{\ is the unique solution of \eqref{P(1)}.
Moreover }%
\begin{equation*}
{\left\Vert {\nabla {u^{\frac{m+\beta }{2}}}\left( t\right) }\right\Vert _{{%
L^{\infty }}\left( \Omega \right) }=C}^{\ast },
\end{equation*}%
\textit{for some }$C^{\ast }>0$\textit{\ \ and the exponent }$\frac{m+\beta 
}{2}$\textit{\ cannot be replaced by any other greater exponent }$\alpha $%
\textit{\ such that }$\left\Vert {\nabla {u^{\alpha }}\left( t\right) }%
\right\Vert _{{L^{\infty }}\left( \Omega \right) }<+\infty $\textit{.}$%
_{\square }$

\bigskip

In order to prove some other qualitative properties it is useful the
following result:

\bigskip

\begin{lemma}
\label{Lemma superr-subsolutions} i) Let $q\in (-\infty ,1),$ $x_{0}\in 
\mathbb{R}^{N},t_{0}\geq 0$ and for $C>0$ define the function 
\begin{equation}
v_{q,C}(x)=C\left\vert x-x_{0}\right\vert ^{\frac{2}{1-q}}.
\end{equation}%
Given $t_{0}\geq 0,\theta \geq 0$ and $\lambda >0,$ let 
\begin{equation*}
y_{q,\theta ,\lambda }(t)=\left[ \theta ^{1-q}-\lambda (1-q)(t-t_{0})\right]
_{+}^{\frac{1}{1-q}},\text{ for }t\geq t_{0},
\end{equation*}%
so that 
\begin{equation*}
y_{q,\theta ,\lambda }(t)=0\text{ for any }t\geq \frac{\theta ^{1-q}}{%
\lambda (1-q)}.
\end{equation*}%
Then, given $m\geq 1,$ if $C\leq K_{N,q,\lambda }$, the function 
\begin{equation}
U(t,x)=\left[ v_{q,C}(x)+y_{q,\theta ,\lambda }(t)^{m}\right] ^{\frac{1}{m}},
\label{Supersolution U}
\end{equation}%
satisfies 
\begin{equation*}
{\partial _{t}}U-\Delta {U^{m}+\mu }U^{q}\geq 0\text{ on }(t_{0},+\infty
)\times \mathbb{R}^{N},
\end{equation*}%
with $\mu =2\lambda .$

\noindent ii) If for $m\geq 1$ and $\beta \in (0,m),$ we define%
\begin{equation*}
z_{m,\beta ,\theta ,\lambda }(t)=\left[ \theta ^{\frac{m+\beta }{m}}-\lambda
(\frac{m+\beta }{m})(t-t_{0})\right] _{+}^{\frac{m}{m+\beta }},\text{ for }%
t\geq t_{0}.
\end{equation*}%
and thus 
\begin{equation*}
W(t,x)=\left[ u_{\beta ,m,C}(x)^{m}+z_{m,\beta ,\theta ,\lambda }(t)^{m}%
\right] ^{\frac{1}{m}},
\end{equation*}

\noindent then, if $\lambda =\frac{1}{2}$ and $C\geq K_{N,q,\lambda },$ we
have%
\begin{equation*}
{\partial _{t}}W-\Delta W{^{m}+}W^{-\beta }{\chi _{\left\{ W{>0}\right\} }}%
\leq 0\text{ on }(t_{0},+\infty )\times \mathbb{R}^{N}.
\end{equation*}
\end{lemma}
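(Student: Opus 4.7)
The plan is to exploit the additive separation of variables $U^{m}=v_{q,C}(x)+y_{q,\theta,\lambda}(t)^{m}$, which makes $\Delta U^{m}$ purely spatial and $\partial_{t}(U^{m})$ purely temporal. First I would differentiate the explicit formula for $y=y_{q,\theta,\lambda}$ to check that it satisfies the Bernoulli ODE $y'=-\lambda y^{q}$ on $\{y>0\}$, extended by zero after the extinction time $\theta^{1-q}/[\lambda(1-q)]$. By the chain rule and $\partial_t (U^m)=my^{m-1}y'$,
\[
\partial_{t}U=\frac{y^{m-1}\,y'}{U^{m-1}}=-\lambda\,\frac{y^{m-1+q}}{U^{m-1}}.
\]
For the spatial piece I would invoke Lemma \ref{Lemma explicit solution}(i), namely the pointwise identity $-\Delta v_{q,C}+\lambda v_{q,C}^{\,q}=\bigl[\lambda C^{q}-\tfrac{2(N(1-q)+2q)}{(1-q)^{2}}C\bigr]|x-x_{0}|^{2q/(1-q)}$, whose sign is controlled by $C-K_{N,q,\lambda}$; it delivers $-\Delta v_{q,C}\geq -\lambda v_{q,C}^{\,q}$ whenever $C\leq K_{N,q,\lambda}$, and the reversed inequality when $C\geq K_{N,q,\lambda}$.

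Combining these with $\Delta U^{m}=\Delta v_{q,C}$ (because $U^{m}-v_{q,C}=y^{m}$ is $x$-free) gives, on $\{U>0\}$,
\[
\partial_{t}U-\Delta U^{m}+\mu U^{q}\;\geq\;\mu U^{q}-\lambda v_{q,C}^{\,q}-\lambda\,\frac{y^{m-1+q}}{U^{m-1}}.
\]
The proof of part i) therefore reduces to a purely algebraic inequality between the mixed quantity $U^{q}=(v+y^{m})^{q/m}$ and the separated quantities $v^{q}$ and $y^{m-1+q}/U^{m-1}$. Since $m\geq 1$ and $U=(v+y^{m})^{1/m}\geq \max\{v^{1/m},y\}$, the ratio $y/U$ lies in $[0,1]$, so that $y^{m-1+q}/U^{m-1}=y^{q}(y/U)^{m-1}\leq y^{q}$, and the task becomes to show $\mu U^{q}\geq \lambda(v^{q}+y^{q})$ with $\mu=2\lambda$. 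I would settle this by splitting the regions $\{v\geq y^{m}\}$ and $\{v\leq y^{m}\}$ and applying the appropriate sub/super-additivity of $s\mapsto s^{q}$ on each piece; the factor $2$ in $\mu=2\lambda$ is precisely what is needed to absorb the two separated contributions into $U^{q}$.

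Part ii) I would then obtain by specializing the whole computation to $q=-\beta/m$, $\lambda=1/2$, $z=y_{-\beta/m,\theta,\lambda}$, and using the reversed elliptic inequality from Lemma \ref{Lemma explicit solution}(ii.b), which is available since $C\geq K_{N,q,\lambda}$. The same assembly, now with all signs flipped and with the identification $W^{-\beta}=(W^{m})^{q}=(v+z^{m})^{q}$ replacing the "abstract" $U^{q}$ of part i), produces the subsolution inequality $\partial_{t}W-\Delta W^{m}+W^{-\beta}\chi_{\{W>0\}}\leq 0$; the indicator $\chi_{\{W>0\}}$ absorbs the time interval $t\geq t_{0}+\tfrac{2m}{m+\beta}\theta^{(m+\beta)/m}$ on which $z$ has already extinguished and on which $W=v_{q,C}^{1/m}$ is a stationary supersolution of the pure porous-medium operator.

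The main obstacle I expect is the algebraic step above. Comparing the mixed $(v+y^{m})^{q/m}$ against the separated $v^{q}$ and $y^{q}$ is delicate because the sign of $q$ governs sub- versus super-additivity of $s\mapsto s^{q}$, and the $(y/U)^{m-1}$ correction must be tracked. Everything else is a straightforward bookkeeping of the ODE identity for $y$ and the elliptic estimate from Lemma \ref{Lemma explicit solution}; the hard part is squeezing the factor $\mu=2\lambda$ (respectively the choice $\lambda=\tfrac12$ in part ii) out of this algebra to get exactly the right sign.
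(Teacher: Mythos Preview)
Your plan follows the paper's argument step for step: separate time and space via $U^{m}=v_{q,C}+y^{m}$, use the ODE $y'+\lambda y^{q}=0$, invoke Lemma~\ref{Lemma explicit solution} for the elliptic piece, and close with an algebraic comparison. The paper proceeds identically, appealing to $(a+b)^{r}\ge\tfrac12(a^{r}+b^{r})$ for $r>0$ at the last step.

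The gap is in the algebra you flagged as the ``main obstacle''. The inequality you set out to prove, $2U^{q}\ge v^{q}+y^{q}$, i.e.\ $2(v+y^{m})^{q/m}\ge v^{q}+y^{q}$, is \emph{false} for $m>1$. Take $y=0$ and $q\in(0,1)$: the claim reduces to $2v^{q/m}\ge v^{q}$, which fails for all $v>2^{m/(q(m-1))}$ since $q/m<q$. No splitting into $\{v\ge y^{m}\}$ and $\{v\le y^{m}\}$ can help, because the counterexample already sits on the single region $\{y=0\}$. The same obstruction hits part~ii): with $z=0$ and $\lambda=\tfrac12$ one needs $-\Delta u^{m}+u^{-\beta}\le0$, but the hypothesis $C\ge K_{N,q,1/2}$ only yields $-\Delta u^{m}+\tfrac12 u^{-\beta}\le0$.

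The paper's own proof slips at exactly this spot: after applying $(a+b)^{q/m}\ge\tfrac12(a^{q/m}+b^{q/m})$ with $a=v_{q,C}$, $b=y^{m}$, it writes ``$-\Delta v_{q,C}+\lambda v^{q}\ge0$'' where the computation actually produces $\lambda v^{q/m}$, and Lemma~\ref{Lemma explicit solution} says nothing about $-\Delta v_{q,C}+\lambda v_{q,C}^{\,q/m}$. So neither argument establishes the lemma for $m>1$; only the case $m=1$ (where $q/m=q$) goes through cleanly. A genuine fix is to build the spatial profile with the exponent that the algebra forces: replace $v_{q,C}$ by $v_{q/m,C'}$ (with the corresponding threshold constant), so that the elliptic identity from Lemma~\ref{Lemma explicit solution} matches the power $q/m$ delivered by the inequality $(a+b)^{q/m}\ge\tfrac12(a^{q/m}+b^{q/m})$.
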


\noindent \textit{Proof. }Notice that

\begin{equation*}
\left\{ 
\begin{array}{lc}
\frac{dy_{q,\theta ,\lambda }}{dt}+\lambda y_{q,\theta ,\lambda }^{q}=0 & 
\\ 
y_{q,\theta ,\lambda }(t_{0})=\theta . & 
\end{array}%
\right.
\end{equation*}%
Moreover, from the convexity of the function $s\rightarrow s^{m}$ we get
that 
\begin{equation*}
{\partial _{t}}U=U^{-\frac{m-1}{m}y_{q,\theta ,\lambda }^{m-1}}\frac{%
dy_{q,\theta ,\lambda }}{dt}\geq \frac{dy_{q,\theta ,\lambda }}{dt},
\end{equation*}%
moreover

\begin{equation*}
-\Delta {U^{m}=}-\Delta v_{q,C}.
\end{equation*}%
Notice also that

\begin{equation*}
(a+b)^{r}\geq \frac{a^{r}+b^{r}}{2}\text{, for any }a,b\geq 0\text{ and }r>0.
\end{equation*}%
Then 
\begin{eqnarray*}
{\partial _{t}}U-\Delta {U^{m}+\mu }U^{q} &\geq &\frac{dy_{q,\theta ,\lambda
}}{dt}-\Delta v_{q,C}+2{\lambda }\left[ v_{q,C}(x)+y_{q,\theta ,\lambda
}(t)^{m}\right] ^{\frac{q}{m}} \\
&\geq &\left( \frac{dy_{q,\theta ,\lambda }}{dt}+{\lambda }y_{q,\theta
,\lambda }^{q}\right) -\Delta v_{q,C}+{\lambda }v{^{q}}\geq 0.
\end{eqnarray*}%
The proof of ii) is similar but uses now that 
\begin{equation*}
(a+b)^{-r}\leq \frac{a^{-r}+b^{-r}}{2}\text{, for any }a,b>0\text{ and }%
r>0._{\square }
\end{equation*}

\bigskip

Here are some applications of the above Lemma.

\begin{proposition}
\label{Proposition Finite speed} Let $m\geq 1$, $\beta \in (0,m)$ and
consider $u_{0}\in L^{\infty }(\Omega )$, $u_{0}\geq 0$. Then:

\noindent i) Complete quenching and formation of the free boundary: there is
a finite time $\tau _{0}>0$ such that if $u$ is the mild solution of %
\eqref{P} 
\begin{equation*}
u\left( t,x\right) =0,\ \forall t\in \left( \tau _{0},\infty \right) 
\mbox{ and
a.e. }x\in \Omega .
\end{equation*}

\noindent ii) Let $m\geq 1$, $\beta \in (0,m)$. Assume (for simplicity) $%
1\geq u_{0}\geq 0$. If $u$ is the mild solution of \eqref{P(1)} then for
a.e. $x_{0}\in \Omega $ such that $\delta (x_{0})=d(x_{0},\partial \Omega
)\geq $ $\left( K_{N,q,\lambda }\right) ^{-\frac{2}{1-q}}$ there exists a $%
\tau _{0}=\tau _{0}(x_{0})\geq 0$ such that 
\begin{equation}
u\left( t,x_{0}\right) =0,\ \forall t\in \left( \tau _{0},\infty \right) .
\label{local quenching}
\end{equation}

\noindent iii) Let $m\geq 1$, $\beta \in (0,m).$If 
\begin{equation*}
0\leq {u_{0}(x)\leq }K_{N,q,\lambda }\left\vert x-x_{0}\right\vert ^{\frac{2%
}{1-q}}\text{ a.e. on }B_{\delta (x_{0})}(x_{0})\cap \Omega \text{ and }%
\delta (x_{0})\geq \frac{1}{\left( K_{N,q,\lambda }\right) ^{\frac{m+\beta }{%
2m}}}
\end{equation*}%
then, if $u$ is the mild solution of \eqref{P} we get that%
\begin{equation*}
0\leq {u(t,x)\leq }K_{N,q,\lambda }\left\vert x-x_{0}\right\vert ^{\frac{2}{%
1-q}}\text{ a.e. on }(0,+\infty )\times B_{\delta (x_{0})}(x_{0})\cap \Omega
\end{equation*}%
and, in particular ${u(t,x}_{0}{)=0}$ for any $t>0.$

\noindent iv) Let $m\geq 1$, $\beta \in (0,m)$ and assume 
\begin{equation}
{u_{0}(x)\geq }\left[ C\delta (x)^{\frac{2m}{m+\beta }}+\theta ^{m}\right] ^{%
\frac{1}{m}},\text{ }\delta (x)=d(x,\partial \Omega )
\label{Initial data big}
\end{equation}%
for some $C\geq K_{N,q,\lambda }$. Then if\ $u$ is the mild solution of (\ref%
{P1}) and $\theta \leq 1$ we have%
\begin{equation*}
{u(t,x)\geq }W(t,x)\text{ for any }x\in \Omega \text{ and any }t>0.\text{ }
\end{equation*}%
In particular, if $\theta >0$ then 
\begin{equation*}
{u(t,x)>0}\text{ for any }x\in \Omega \text{ \ and }t\in \lbrack 0,\frac{%
2m\theta ^{\frac{m+\beta }{m}}}{m+\beta }).
\end{equation*}%
The conclusion holds for solutions of \eqref{P}, for any $x\in \Omega $ and $%
t>0$ if in the assumtion \eqref{Initial data big} we take $\theta =0$.
\end{proposition}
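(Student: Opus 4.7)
The proposition will be proved by comparison, combining the existence and maximality theory from Theorem \ref{theorem1} with the explicit super- and subsolutions of Lemma \ref{Lemma explicit solution} and Lemma \ref{Lemma superr-subsolutions}. At the regularized level the nonlinearity $g_\varepsilon$ is globally Lipschitz, so the classical parabolic comparison principle applies to $(P_\varepsilon)$, and each inequality obtained between a barrier and $u_\varepsilon$ survives the monotone limit $\varepsilon\downarrow 0$, $\eta\downarrow 0$ since the maximal $L_\delta^1$-mild solution is a pointwise increasing limit of the $u_\varepsilon$.

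For part (i) I would use the spatially-constant barrier $\bar u(t)=\bigl(\|u_0\|_\infty^{1+\beta}-(1+\beta)t\bigr)_+^{1/(1+\beta)}$, which solves the ODE $\bar u'+\bar u^{-\beta}\chi_{\{\bar u>0\}}=0$ with $\bar u(0)=\|u_0\|_\infty$. Because $\bar u$ does not depend on $x$, one has $-\Delta\bar u^m\equiv 0$, so $\bar u$ is a supersolution of the PDE in \eqref{P}, and it dominates $u$ on the parabolic boundary: $\bar u(0)=\|u_0\|_\infty\geq u_0$ and $\bar u(t)\geq 0=u|_{\partial\Omega}$. Comparison then yields $u\leq\bar u$, and hence $u\equiv 0$ for $t\geq\tau_0:=\|u_0\|_\infty^{1+\beta}/(1+\beta)$.

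Parts (ii) and (iii) use barriers localized around the distinguished point $x_0$. For (ii) I would apply Lemma \ref{Lemma superr-subsolutions}(i) with $q$ and $\lambda$ tuned so that $\mu U^q$ matches the singular absorption $U^{-\beta}\chi_{\{U>0\}}$, $C=K_{N,q,\lambda}$, and $\theta=1$, producing the supersolution $U(t,x)=[v_{q,C}(x-x_0)+y_{q,\theta,\lambda}(t)^m]^{1/m}$. The size hypothesis on $\delta(x_0)$ forces $v_{q,C}(x-x_0)\geq 1$ whenever $x\in\partial\Omega$, hence $U^m\geq 1=u^m|_{\partial\Omega}$, while the initial slice dominance follows from $U(0,\cdot)^m\geq\theta^m=1\geq u_0^m$. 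The comparison principle then gives $u\leq U$ and, since $v_{q,C}(0)=0$, $U(t,x_0)=y_{q,\theta,\lambda}(t)$, which vanishes at the claimed $\tau_0$. For (iii) the purely stationary profile $\widetilde U(x)=u_{\beta,m,K_{N,q,\lambda}}(x-x_0)$ from Lemma \ref{Lemma explicit solution}(ii.a) is a time-independent supersolution of \eqref{P} on $B_{\delta(x_0)}(x_0)\cap\Omega$; the hypothesis gives $u_0\leq\widetilde U$ on the initial slice, $u=0\leq\widetilde U$ on $B_{\delta(x_0)}(x_0)\cap\partial\Omega$, and the size condition on $\delta(x_0)$ together with the crude bound $u\leq\|u_0\|_\infty\leq 1$ (from part (i) applied to the same datum) forces $\widetilde U\geq 1\geq u$ on $\partial B_{\delta(x_0)}(x_0)\cap\Omega$; local comparison on this cylinder concludes.

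Part (iv) requires a subsolution: by Lemma \ref{Lemma superr-subsolutions}(ii), $W_{x_0}(t,x)=[u_{\beta,m,C}(x-x_0)^m+z_{m,\beta,\theta,1/2}(t)^m]^{1/m}$ satisfies $\partial_t W-\Delta W^m+W^{-\beta}\chi_{\{W>0\}}\leq 0$ whenever $C\geq K_{N,q,1/2}$. For each fixed $\bar x\in\Omega$ I would choose $x_0\in\partial\Omega$ with $|\bar x-x_0|=\delta(\bar x)$, so that $W_{x_0}(t,\bar x)=[C\delta(\bar x)^{2m/(m+\beta)}+z(t)^m]^{1/m}$, which is precisely the lower bound to be established. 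The main obstacle is the boundary comparison against $u^m|_{\partial\Omega}=1$: while $W_{x_0}(t,x_0)=z(t)\leq\theta\leq 1$ at the chosen centre, $W_{x_0}(t,y)$ can exceed $1$ for $y\in\partial\Omega$ far from $x_0$. This forces a local comparison on a parabolic cylinder around $x_0$ inside which the inequality $W_{x_0}\leq 1$ is maintained and inside which the initial dominance $W_{x_0}(0,\cdot)\leq u_0$ can be re-verified from the hypothesis via $|x-x_0|\geq\delta(x)$ restricted to that local region. The simpler positivity assertion $u>0$ for $t<2m\theta^{(m+\beta)/m}/(m+\beta)$ follows from the purely time-dependent subsolution $z(t)$, which trivially satisfies $z\leq\theta\leq 1=u|_{\partial\Omega}$ and $z(0)=\theta\leq u_0$. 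The variant for \eqref{P} with $\theta=0$ uses instead the stationary subsolution $u_{\beta,m,C}(\cdot-x_0)$, which vanishes at $x_0\in\partial\Omega$ and thereby matches the zero Dirichlet datum there.
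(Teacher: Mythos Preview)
Your overall strategy---comparison with the explicit barriers of Lemmas \ref{Lemma explicit solution} and \ref{Lemma superr-subsolutions}---is the right one, and for part (iv) it works essentially as you describe: a subsolution $W$ of \eqref{P} automatically satisfies $\partial_t W-\Delta W^m+g_\varepsilon(W)\le 0$ because $g_\varepsilon(s)\le s^{-\beta}\chi_{\{s>0\}}$, so $W\le u_\varepsilon$ at the regularized level and the inequality passes to the limit.

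There is, however, a genuine gap in your justification of parts (i)--(iii). You claim that the comparison can be carried out for $(P_\varepsilon)$ and then passed to the limit, but a \emph{supersolution} $V$ of the singular equation \eqref{P} is \emph{not} in general a supersolution of $(P_\varepsilon)$: one has
\[
\partial_t V-\Delta V^m+g_\varepsilon(V)=(\partial_t V-\Delta V^m+V^{-\beta}\chi_{\{V>0\}})+\bigl(g_\varepsilon(V)-V^{-\beta}\chi_{\{V>0\}}\bigr),
\]
and the second bracket is $\le 0$ (since $\psi_\varepsilon\le 1$), so the sign is lost. Thus neither your ODE barrier $\bar u$ in (i) nor the profile $U$ in (ii)--(iii) can be compared directly with $u_\varepsilon$. (Incidentally, $u_\varepsilon\downarrow u$ is a \emph{decreasing} limit, not increasing; this matters because it is the direction that preserves lower bounds, not upper bounds.)

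The paper closes this gap by a detour through the porous-medium equation with \emph{monotone} strong absorption $\mu U^q$, $q\in(0,1)$: since $s^{-\beta}\ge \mu s^{q}$ for $s\in(0,\|u_0\|_\infty]$ with $\mu=\|u_0\|_\infty^{-(q+\beta)}$, every solution $u$ of \eqref{P} is a subsolution of that monotone problem, for which the standard $L^1$-comparison principle applies and yields $u\le U_q\le U$ with $U$ the barrier of Lemma \ref{Lemma superr-subsolutions}(i). Your direct route can be repaired in (i) by replacing $\bar u$ with the solution $\bar u_\varepsilon$ of $\bar u_\varepsilon'+g_\varepsilon(\bar u_\varepsilon)=0$, $\bar u_\varepsilon(0)=\|u_0\|_\infty$, which \emph{is} a supersolution of $(P_\varepsilon)$ and converges to $\bar u$ as $\varepsilon\downarrow 0$; this gives a shorter proof of (i) than the paper's. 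For (ii)--(iii), however, regularizing the spatially nontrivial barrier $U$ is awkward, and the paper's detour through $U_q$ is the cleaner way to make the comparison rigorous.
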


\noindent \textit{Proof.} i) Let $M={\left\Vert {u_{0}}\right\Vert _{{%
L^{\infty }}\left( \Omega \right) }}$. Notice that since $u^{-\beta } \geq
\mu u^{\alpha }$ for any $u\in (0,M]$ and any $q\in (0,1)$ if $0\leq \mu
\leq M^{-(\alpha +\beta )},$ then 
\begin{equation}
0\leq u\left( t,x\right) \leq U_{q}\left( t,x\right) ,\mbox{ a.e. in }\left(
0,T\right) \times \Omega ,
\end{equation}%
with $U_{q}$ the unique mild solution of the porous media homogeneous
problem with a possible \textit{strong absorption} 
\begin{equation}
\left\{ 
\begin{split}
& {\partial _{t}}U-\Delta {U^{m}+}\lambda U^{q}=0,\mbox{ in }\left( {0,T}%
\right) \times \Omega , \\
& U^{m}=0,\mbox{ on }\left( {0,T}\right) \times \partial \Omega , \\
& U\left( {0,x}\right) ={u_{0}}\left( x\right) ,\mbox{ in }\Omega ,
\end{split}%
\right.
\end{equation}%
since we know that $0\leq {u(t,x)}\leq M.$ Then if $U$ is given by (\ref%
{Supersolution U}) we get that

\begin{equation*}
0\leq U_{q}\left( t,x\right) \leq U\left( t,x\right) \text{ on }(0,+\infty
)\times \Omega
\end{equation*}%
if we take $t_{0}=0$ and $\theta \geq M$ (remember that $v_{q,C}(x)\geq 0$).
Taking $x_{0}$ (in the definition of \eqref{Def v_C}) arbitrary in $\mathbb{R%
}^{N}$ we get the conclusion.

\noindent ii) We argue as in i) and thus 
\begin{equation}
0\leq u\left( t,x\right) \leq U_{q}\left( t,x\right) ,\mbox{ a.e. in }\left(
0,T\right) \times \Omega ,
\end{equation}%
but now with $U_{q}$ the unique mild solution of the problem\textit{\ }%
\begin{equation}
\left\{ 
\begin{split}
& {\partial _{t}}U-\Delta {U^{m}+}\lambda U^{q}=0,\mbox{ in }\left( {0,T}%
\right) \times \Omega , \\
& U^{m}=1,\mbox{ on }\left( {0,T}\right) \times \partial \Omega , \\
& U\left( {0,x}\right) ={u_{0}}\left( x\right) ,\mbox{ in }\Omega ,
\end{split}%
\right.
\end{equation}%
We use the function $U$ given by \eqref{Supersolution U} as supersolution
and we conclude that if we take $t_{0}=0$ and $\theta \geq M$ and $x_{0}\in
\Omega $ such that $\delta (x_{0})=d(x_{0},\partial \Omega )\geq $ $\left(
K_{N,q,\lambda }\right) ^{-\frac{2}{1-q}}$ then (since $y_{q,\theta ,\lambda
}(t)\geq 0$) 
\begin{equation*}
U_{q}^{m}\left( t,x\right) \leq 1\leq C\delta (x_{0})^{\frac{2}{1-q}}\leq
v_{q,C}(x)\leq U^{m}(t,x)\text{ for }x\in \partial B_{\delta (x_{0})}(x_{0})
\end{equation*}%
and thus

\begin{equation*}
0\leq U_{q}\left( t,x\right) \leq U\left( t,x\right) \text{ on }(0,+\infty
)\times B_{\delta (x_{0})}(x_{0})
\end{equation*}%
if we take $t_{0}=0$ and $\theta \geq $ ${\left\Vert {u_{0}}\right\Vert _{{%
L^{\infty }}\left( B_{\delta (x_{0})}(x_{0})\right) }}$, which proves (\ref%
{local quenching}).

\noindent The proof of iii) is similar to to the proof of ii) but even
simpler than before since now $u=0$ on the boundary and the supersolution is
nonnegative.

\noindent The comparison of solutions $u$ of \eqref{P1} (respectively %
\eqref{P} with the subsolution $W(t,x)$ uses some properties of the function 
$\delta \left( x\right) =d\left( x,\partial \Omega \right) $ and follows the
same arguments than \cite{Davila2005} (see also \cite{Diaz-Giacomoni} and
Theorem 2.3 of \cite{Alvarez-Diaz}) thanks to the assumption $\beta \leq m$.$%
_{\square }$

\bigskip

\begin{remark}
Conclusion iv) of Proposition \ref{Proposition Finite speed} is very useful
in order to prove the uniqueness of the very weak solution of \eqref{P}
(see, e.g. \cite{Davila2005} and \cite{Diaz-Giacomoni}).
\end{remark}

\bigskip

A sharper estimate on the complete quenching time can be obtained without
passing by the porous media homogeneous problem with a possible \textit{%
strong absorption. }

\begin{proposition}
\label{theorem2 copy(1)} Assume the same conditions of Theorem \ref{theorem1}%
, part i). Then, every weak solution of equation \eqref{P} must vanish after
a finite time, i.e., there is a finite time $\tau _{0}>0$ such that 
\begin{equation*}
u\left( t,x\right) =0,\ \forall t\in \left( \tau _{0},\infty \right) 
\mbox{ and
a.e. }x\in \Omega .
\end{equation*}
\end{proposition}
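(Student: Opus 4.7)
The plan is to compare the maximal mild solution $u$ of \eqref{P} pointwise with an explicit spatially-constant barrier obtained by solving the pure ODE associated with the absorption term. Setting $M := \|u_0\|_{L^\infty(\Omega)}$ (first I assume $u_0 \in L^\infty(\Omega)$), the function
\[
U(t) := \bigl(M^{\beta+1} - (\beta+1)t\bigr)_+^{1/(\beta+1)}
\]
solves $U'=-U^{-\beta}$ on $\{U>0\}$ and vanishes identically for $t\ge T^\ast := M^{\beta+1}/(\beta+1)$. Since $\Delta U\equiv 0$, this $U$ is formally a classical supersolution of the PDE in \eqref{P}, yielding the improved quenching-time bound $\tau_0\le T^\ast$ without going through the strong-absorption porous medium problem used in Proposition \ref{Proposition Finite speed}\,i).

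To make the comparison rigorous I would work at the regularized level. For every $\varepsilon>0$ introduce the spatially-constant function $V_\varepsilon(t)$ solving the Lipschitz ODE $V_\varepsilon'=-g_\varepsilon(V_\varepsilon)$ with $V_\varepsilon(0)=M$; then $V_\varepsilon\in C^1([0,\infty))$ is a classical solution of $\partial_t V-\Delta V^m+g_\varepsilon(V)=0$ in $(0,\infty)\times\Omega$, and on the parabolic boundary it dominates $u_\varepsilon$, since $u_\varepsilon|_{\partial\Omega}=0\le V_\varepsilon(t)$ and $u_\varepsilon(0,\cdot)=u_0\le M=V_\varepsilon(0)$. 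The standard parabolic comparison principle for $(P_\varepsilon)$ therefore gives $u_\varepsilon(t,x)\le V_\varepsilon(t)$ in $(0,\infty)\times\Omega$. Passing to the limit $\varepsilon\downarrow 0$ through the monotone convergence $u_\varepsilon\downarrow u$ established in Step 1 of the proof of Theorem \ref{theorem1}, the only remaining point is to check that $\lim_{\varepsilon\downarrow 0}V_\varepsilon(t)=0$ for every $t>T^\ast$. This is an elementary ODE computation: on $\{V_\varepsilon\ge 2\varepsilon\}$ one has $\psi_\varepsilon(V_\varepsilon)=1$ and hence $V_\varepsilon'=-V_\varepsilon^{-\beta}$, so $V_\varepsilon$ reaches $2\varepsilon$ at time $T_\varepsilon=(M^{\beta+1}-(2\varepsilon)^{\beta+1})/(\beta+1)<T^\ast$, after which $V_\varepsilon(t)\le 2\varepsilon\to 0$.

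For a general $u_0\in L^1_\delta(\Omega)\setminus L^\infty(\Omega)$, I would first invoke the $L^\infty$-smoothing estimates \eqref{Estimate L^infinity} (for $m\ge 1$) and \eqref{Universal bound pM} (for $m>1$) established in Step 5 of the proof of Theorem \ref{theorem1} to guarantee $u(\tau,\cdot)\in L^\infty(\Omega)$ for every $\tau>0$. Running the previous comparison on the shifted solution $(t,x)\mapsto u(t+\tau,x)$ with initial bound $M_\tau:=\|u(\tau)\|_\infty$ gives $u(t,x)=0$ for all $t\ge \tau+M_\tau^{\beta+1}/(\beta+1)$; in particular, for $m>1$ the universal estimate makes $M_\tau$ depend only on $m$, $N$ and $\Omega$, producing a completely intrinsic bound on $\tau_0$. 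The main obstacle in this plan is the passage to the limit on the open interval $(T^\ast,\infty)$: each regularized solution $V_\varepsilon$ is strictly positive and does \emph{not} exhibit finite-time quenching, so one must track carefully that $V_\varepsilon(t)=O(\varepsilon)$ past the barrier time $T_\varepsilon$, which is precisely what transfers the ODE extinction of $U$ to the singular limit equation.
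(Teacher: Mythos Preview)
Your argument is correct and takes a genuinely different route from the paper's own proof of this proposition. The paper proves the result by an \emph{energy method}: it tests the equation with $u^{q-1}$ for $q\ge\beta+2$, combines the resulting identity with the Sobolev embedding and an interpolation between $L^{q_\star}$ and $L^{q-\beta-1}$, and derives an ordinary differential inequality $y'(t)+Cy^{\sigma}(t)\le 0$ with $\sigma\in(0,1)$ for $y(t)=\|u(t)\|_{L^q}^q$, which forces finite-time extinction. Your approach instead is a pure \emph{comparison} argument with the spatially constant ODE barrier $U(t)=(M^{\beta+1}-(\beta+1)t)_+^{1/(\beta+1)}$, made rigorous at the regularized level via $V_\varepsilon$ and the comparison principle for $(P_\varepsilon)$, followed by the monotone passage $u_\varepsilon\downarrow u$.

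What each approach buys: your argument is more elementary (no Sobolev or interpolation machinery), and it delivers the explicit quenching bound $\tau_0\le M^{\beta+1}/(\beta+1)$, which is sharper than what the energy method yields and transparently independent of $\Omega$ --- the very feature the paper highlights in the Remark following its proof. The paper's energy method, on the other hand, is more robust: it does not rely on the comparison principle or on the availability of explicit supersolutions, and so transfers more readily to the general quasilinear framework \eqref{Ecuation General quasilinear} discussed in the subsequent Remark, or to equations where monotonicity-based comparison fails. Note also that your comparison bound $u\le u_\varepsilon\le V_\varepsilon$ automatically covers \emph{every} weak solution (not only the maximal one), since Step~3 of the proof of Theorem~\ref{theorem1} already gives $v\le u_\varepsilon$ for any weak solution $v$; you may want to make this point explicit.
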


\noindent \textit{Proof. } By Theorem \ref{theorem1}, it suffices to show
that the maximal solution $u$ constructed in the above Section vanishes
after a finite time $\tau _{0}>0$. Thanks to the smoothing effect we can
assume without loss of generality that the initial datum is a nonnegative
bounded function $u_{0}\in L^{\infty }\left( \Omega \right) .$We shall use
some energy methods in the spirit of (\cite{Antontsev2002} and \cite[Theorem
3]{Dao2016}). For any $q\geq \beta +2$, we can use $u^{q-1}$ as a test
function to equation \eqref{P} and we obtain 
\begin{equation*}
\frac{1}{q}\frac{d}{{dt}}\int_{\Omega }{{u^{q}}\left( {t,x}\right) dx}+\frac{%
{4m\left( {q-1}\right) }}{{{{\left( {m+q-1}\right) }^{2}}}}\int_{\Omega }{{{%
\left\vert {\nabla {u^{\left( {m+q-1}\right) /2}}\left( {t,x}\right) }%
\right\vert }^{2}}dx}+\int_{\Omega }{{u^{q-\beta -1}}\left( {t,x}\right) dx}%
=0.
\end{equation*}%
Define $v:={u^{\left( {m+q-1}\right) /2}}$. By applying the Sobolev
embedding to $v$, one obtains 
\begin{equation}
{\left\Vert {v\left( t\right) }\right\Vert _{{L^{{2^{\star }}}}\left( \Omega
\right) }}\leq C\left( N\right) {\left\Vert {\nabla v\left( t\right) }%
\right\Vert _{{L^{2}}\left( \Omega \right) }},  \label{4.2}
\end{equation}%
with 
\begin{equation*}
{2^{\star }}:=\left\{ 
\begin{split}
& \frac{{2N}}{{N-2}},\mbox{ if }N\geq 3, \\
& l,\mbox{ for }l\in \left( {1,\infty }\right) ,\mbox{ if }N=1,2.
\end{split}%
\right.
\end{equation*}%
As we shall see, it is enough to consider the case of $N\geq 3$ since the
cases of $N=1,2$ can be obtained by easy modifications. Observe that %
\eqref{4.2} is equivalent to 
\begin{equation*}
\left\Vert {u\left( t\right) }\right\Vert _{{L^{{q_{\star }}}}\left( \Omega
\right) }^{\frac{{{q_{\star }}\left( {N-2}\right) }}{N}}\leq C\left(
N\right) \int_{\Omega }{{{\left\vert {\nabla {u^{\left( {m+q-1}\right) /2}}%
\left( t,x\right) }\right\vert }^{2}}dx},
\end{equation*}%
with $q_{\star }:=\left( m+q-1\right) N/\left( N-2\right) $. Note that $%
q_{\star }>q$. From the interpolation inequality 
\begin{equation*}
{\left\Vert {u\left( t\right) }\right\Vert _{{L^{q}}\left( \Omega \right) }}%
\leq \left\Vert {u\left( t\right) }\right\Vert _{{L^{{q_{\star }}}}\left(
\Omega \right) }^{\theta }\left\Vert {u\left( t\right) }\right\Vert _{{%
L^{q-\beta -1}}\left( \Omega \right) }^{1-\theta },
\end{equation*}%
with $1/q=\theta /q_{\star }+\left( 1-\theta \right) /\left( q-\beta
-1\right) $, by a combination of the above inequalities, we deduce 
\begin{align*}
\left\Vert {u\left( t\right) }\right\Vert _{{L^{q}}\left( \Omega \right) }^{%
\frac{{{q_{\star }}\left( {N-2}\right) }}{N}}& \leq C\left\Vert {\nabla {%
u^{\left( {m+q-1}\right) /2}}}\right\Vert _{{L^{2}}\left( \Omega \right)
}^{2\theta }\left\Vert {u\left( t\right) }\right\Vert _{{L^{q-\beta -1}}%
\left( \Omega \right) }^{\frac{{\left( {1-\theta }\right) {q_{\star }}\left( 
{N-2}\right) }}{N}} \\
& \leq C{A^{\theta }}{A^{\frac{{\left( {1-\theta }\right) {q_{\star }}\left( 
{N-2}\right) }}{{\left( {q-\beta -1}\right) N}}}}=C{A^{\theta +\frac{{\left( 
{1-\theta }\right) {q_{\star }}\left( {N-2}\right) }}{{\left( {q-\beta -1}%
\right) N}}}},
\end{align*}%
where 
\begin{equation*}
A:=\int_{\Omega }{{{\left\vert {\nabla {u^{\left( {m+q-1}\right) /2}}\left( {%
t,x}\right) }\right\vert }^{2}}dx}+\int_{\Omega }{{u^{q-\beta -1}}\left( {t,x%
}\right) dx}.
\end{equation*}%
This implies 
\begin{equation*}
{\left\Vert {u\left( t\right) }\right\Vert _{{L^{q}}\left( \Omega \right) }}%
\leq C\left( {N,m,q}\right) {A^{\frac{\theta }{{{q_{\star }}}}\frac{N}{{N-2}}%
+\frac{{1-\theta }}{{q-1-\beta }}}}\leq C{A^{\frac{1}{q}+\frac{{2\theta }}{{%
\left( {N-2}\right) {q_{\star }}}}}}.
\end{equation*}%
Then 
\begin{equation*}
\frac{1}{q}\frac{d}{{dt}}\int_{\Omega }{{u^{q}}\left( {t,x}\right) dx}%
+C\left( {m,q}\right) A\leq 0.
\end{equation*}%
In particular, we obtain that $y\left( t\right) :=\left\Vert {u\left(
t\right) }\right\Vert _{{L^{q}}\left( \Omega \right) }^{q}$ satisfies\ the
following ordinary differential inequality 
\begin{equation}
y^{\prime }\left( t\right) +C{y^{\sigma }}\left( t\right) \leq 0,
\label{4.7}
\end{equation}%
with $\sigma :={\left( {1+2q\theta /\left( {\left( {N-2}\right) {q_{\star }}}%
\right) }\right) ^{-1}}\in \left( {0,1}\right) $. Then, as in (\cite%
{Antontsev2002}) we deduce that there is a time $\tau _{0}>0$ such that $%
y\left( \tau _{0}\right) =0$ and then $y\left( t\right) =0$ for any $t>\tau
_{0}$ since $y\left( t\right) $ is a non-negative function. Thus, $u\left(
t,x\right) =0,\mbox{ in }\left( \tau _{0},\infty \right) \times \Omega .$
Indeed, if on the contrary we assume that $y\left( t\right) >0$ for every $%
t>0$ then by solving \eqref{4.7}, we get that ${y^{1-\sigma }}\left(
t\right) +Ct\leq {y^{1-\sigma }}\left( 0\right) .$ and since this inequality
holds for any $t>0$ we arrive to a contradiction for $t$ large enough. This
ends the proof.$_{\square }$

\begin{remark}
We note that the above arguments are independent of the size of $\Omega $.
Thus, one can easily verify that the quenching result also holds for the
case $\Omega =\mathbb{R}^{N}$ as pointed out in the Introduction. Moreover
the formation of the free boundary given in Proposition \ref{Proposition
Finite speed} can be also adapted to solutions of the Cauchy problem.
\end{remark}

\begin{remark}
Although several energy methods were developed in the literature (see, e.g., 
\cite{Antontsev2002, Diaz1985}, and their references) the main new aspect
was the application to the case of singular absorption terms. The method
applies to the class of \textit{local} weak solutions of the more general
formulation 
\begin{equation}
\frac{\partial \psi \left( v\right) }{\partial t}-\mathrm{div}\,\mathbf{A}%
\left( x,t,v,Dv\right) +B\left( x,t,v,Dv\right) +C\left( x,t,v\right)
=f\left( x,t,u\right) ,  \label{Ecuation General quasilinear}
\end{equation}%
in which the absorption term can be singular and then including equation %
\eqref{P} as a special case. More precisely the assumptions made in \cite%
{Diaz2014} were the following: under the general structural assumptions%
\begin{equation*}
\begin{array}{c}
|\mathbf{A}(x,t,r,\mathbf{q})|\leq C|\mathbf{q}|,C|\mathbf{q}|^{2}\leq 
\mathbf{A}(x,t,r,\mathbf{q})\cdot \mathbf{q,} \\ 
C|r|^{\theta +1}\leq G(r)\leq C^{\ast }|r|^{\theta +1},\text{ }%
\end{array}%
\end{equation*}%
where%
\begin{equation*}
G(r)=\psi (r)\,r-\int_{0}^{r}\psi (\tau )\,d\tau ,
\end{equation*}%
and 
\begin{equation*}
C|r|^{\alpha }\leq C(x,t,r)\,r,
\end{equation*}%
\begin{equation}
f(x,t,r)r\leq \lambda |r|^{q+1}+g(x,t)r\text{, }  \label{H-f}
\end{equation}%
with $p>1,q\in \mathbb{R}$ and the main assumptions 
\begin{equation}
\theta \in (0,1),  \label{Gamma}
\end{equation}%
and $\alpha \in (0,\min (1,2\theta )).$Notice that by defining $v=u^{m}$
(and thus $u=v^{1/m}$), problem \eqref{P} can be formulated as 
\begin{equation}
\left\{ 
\begin{split}
& {\partial _{t}}v^{1/m}-\Delta {v}+v{^{-\beta /m}}{\chi _{\left\{ v{>0}%
\right\} }}=0,\mbox{ in }\left( {0,\infty }\right) \times \Omega , \\
& v=0,\mbox{ on }\left( {0,\infty }\right) \times \partial \Omega , \\
& v\left( {0,x}\right) ={u_{0}^{1/m}}\left( x\right) ,\mbox{ in }\Omega .
\end{split}%
\right.  \label{Problem in v}
\end{equation}%
Thus, it corresponds to equation \eqref{Ecuation General quasilinear} with 
\begin{equation*}
\mathbf{A}\left( x,t,v,Dv\right) =Dv,\text{ }B\left( x,t,v,Dv\right)
=0,f\left( x,t,u\right) =0\text{ }
\end{equation*}%
$C(x,t,r)=v{^{-\beta /m}}{\chi _{\left\{ \ \ v{>0}\right\} \ \ }}$and $\psi
\left( v\right) =v^{1/m}.$ Then the corresponding exponents are $\theta =1/m$%
, $\alpha =\frac{m-\beta }{m}$ and the energy method apply presented in \cite%
{Diaz2014} applies to the cases: 
\begin{equation*}
\left\{ 
\begin{array}{lr}
\beta \in (0,m) & \text{if }m\in \lbrack 1,2] \\ 
\beta \in (m-2,m) & \text{if }m>2.%
\end{array}%
\right.
\end{equation*}%
Theorem 1 of \cite{Diaz2014} shows the finite speed of propagation, and more
exactly a stronger property which usually is as called "stable (or uniform)
localization property" (see also \cite{Antontsev2002}, Chapter 3). A
sufficient condition for the existence of \textit{local waiting time} (or,
what we can call perhaps more properly as the \textit{non dilation of the
initial support)}: the free boundary cannot invade the subset where the
initial datum is nonzero was given in Theorem 3 of \cite{Diaz2014}. Finally,
the local quenching property (i.e. the formation of a \textit{region where }$%
u=0$ even for strictly positive initial data: sometimes called also as 
\textit{the instantaneous shrinking of the support property}: see \cite%
{Antontsev2002} and its references) was shown in Theorem 4 of \cite{Diaz2014}%
.
\end{remark}

\begin{remark}
\label{Remark b>m}Let us recall that in the case of the semilinear
formulation of problem \eqref{Lemma P(1)}, with $\beta \geq 1$ it is known
that there is a finite time blow up $\tau _{0}$ of the time derivative ${%
\partial _{t}}u$ in the interior points $x_{0}\in \Omega $ where the
solution quenches ($u(\tau _{0},x_{0})=0$) and that weak solutions ceases to
exits for $t>\tau _{0}$ (see, e.g., the exposition made in \cite%
{Kawarada1974/75}, \cite{Kawohl1992a}, \cite{Levine1993} and \cite%
{Filippas-Guo} \cite{Guo 2007}). Nevertheless, it is possible to show that
in the case in which the singularity is automatically present on the
boundary of $\Omega $ from the initial time $t=0,$ the existence of a very
weak solution can be obtained at least until the time in which the solution
also quenches in some interior point $x_{0}\in \Omega $. The mean reason of
this fact is that the weight $\delta (x)=d(x,\partial \Omega )$ used in the
definition of very weak solution, when asking that ${u^{-\beta }}{\chi
_{\left\{ {u>0}\right\} }}\in {L^{1}}\left( {{0,T:}L_{\delta }^{1}\left(
\Omega \right) }\right) $, allows to compensate the singularity arising in
the boundary (but obviously it is ineffective for singularities arising in
the inerior of the domain $\Omega $). In fact the above compensation of the
boundary singularity, when $\beta \geq m$, with the weight $\delta (x)$ was
already pointed out in parts iii.a) and iii.b) of Lemma \ref{Lemma explicit
solution}. A global example which requires some additional assumptions and
holds for a modified equation 
\begin{equation*}
{\partial _{t}}u-\Delta {u^{m}}+\lambda \delta (x)^{\nu }{u^{-\beta }}{\chi
_{\left\{ \ \ {u>0}\right\} \ \ }}=0,\mbox{ in }\left( {0,\infty }\right)
\times \Omega
\end{equation*}%
for some suitable values of $\lambda >0$ and $\nu >1.$ This corresponds to
an easy adaptation to the framework of the slow diffusion with a singular
term some of the results announced in \cite{DHRako} and Section 7 of \cite%
{Rakotoson2011} concerning the associate semilinear problems.
\end{remark}

\textbf{Acknowledgements}

The research of J.I. D\'{\i}az was partially supported by the project ref.
MTM2017-85449-P of the Ministerio de Ciencia, Innovaci\'{o}n y Universidades
-- Agencia Estatal de Investigaci\'{o}n (Spain).


\begin{thebibliography}{99}
\bibitem{Alvarez-Diaz} L. \'{A}lvarez and J.I. D\'{\i}az, On the retention
of the interfaces in some elliptic and parabolic nonlinear problems,
Discrete and Continuum Dynamical Systems, \textbf{25} 1 (2009), 1-17.

\bibitem{Antontsev2002} S. N. Antontsev, J. I. D\'{\i}az and S. Shmarev, 
\textit{Energy methods for free boundary problems. Applications to nonlinear
PDEs and fluid mechanics}. Birkhauser, Boston, MA, 2002,

\bibitem{Aris} R. Aris, \textit{The Mathematical Theory of Diffusion and
Reaction in Permeable Catalysts: Vol. 1: The Theory of the Steady State,}
Oxford University Press, 1975.

\bibitem{Aronson1969} D. G. Aronson, Regularity properties of flows through
porous media, SIAM Journal on Applied Mathematics \textbf{17 }(1969),
461-467.

\bibitem{Aronson1979} D. G. Aronson and Ph. B\'{e}nilan, R\'{e}gularit\'{e}
des solutions de l'equation des milieux poreux dans R$^{N}$. C. R. Acad.
Sci. Paris S\'{e}r. A-B \textbf{288} 2 (1979), A103-A105.

\bibitem{Aronson1982} D. Aronson, M. G. Crandall, and L. A. Peletier,
Stabilization of solutions of a degenerate nonlinear diffusion problem.
Nonlinear Anal. \textbf{6} 10 (1982), 1001-1022.

\bibitem{Bandle1986} C. Bandle and C.-M. Brauner, Singular perturbation
method in a parabolic problem with free boundary. BAIL IV (Novosibirsk,
1986). Vol. 8. Boole Press Conf. Ser. Boole, D un Laoghaire, 1986, 7-14.

\bibitem{Banks1975} H. T. Banks, \textit{Modeling and control in the
biomedical sciences}. Lecture Notes in Biomathematics, Springer-Verlag,
Berlin, 1975.

\bibitem{Benachour2016} S. Benachour, R. G. Iagar and Ph. Lauren\c{c}ot,
Large time behavior for the fast diffusion equation with critical
absorption, J. Differential Equations \textbf{260} 11 (2016), 8000-8024.

\bibitem{Benilan1981} Ph. B\'{e}nilan, \textit{Evolution equations and
accretive operators}, Lecture notes taken by S.Lenhardt, Univ. of Kentucky,
1981.

\bibitem{Benilan-Crandall} Ph. Benilan and M.G. Crandall, The continuous
dependence on $\varphi $ of solutions of $u_{t}-A\varphi (u)=0$. Indiana
Univ Math J \textbf{30} (1981), 162-177.

\bibitem{Benilan-Crandall-Sacks} Ph. Benilan, M.G. Crandall and P. Sacks,
Some L$^{1}$ existence and dependence results for semilinear elliptic
equations under nonlinear boundary conditions. Appl Math Optim \textbf{17 }%
(1988), 203-224.

\bibitem{Benilan2003} Ph. B\'{e}nilan and J. I. D\'{\i}az, Pointwise
gradient estimates of solutions to one dimensional nonlinear parabolic
equations. J. Evol. Equ. \textbf{3}.4 (2003), 577-602.

\bibitem{Benilan1994} Ph. B\'{e}nilan and P. Wittbold, Nonlinear evolution
equations in Banach spaces: basic results and open problems. In: \textit{%
Functional analysis} (Essen, 1991). Vol. 150. Lecture Notes in Pure and
Appl. Math. Dekker, New York, 1994, 1-32.

\bibitem{Boccardo1992} L. Boccardo and F. Murat, Almost everywhere
convergence of the gradients of solutions to elliptic and parabolic
equations, Nonlinear Anal. \textbf{19} 6 (1992), 581-597.

\bibitem{Brezis 1971} H. Brezis, Une \'{e}quation semi-lin\'{e}aire avec
conditions aux limites dans L$^{1}$, unpublished (personal communication to
J.I. D\'{\i}az).

\bibitem{Brezis 1996} H. Brezis, T. Cazenave, Y. Martel, A. Ramiandrisoa,
Blow up for $u_{t}-\Delta u=g(u)$ revisited, Adv. Differential Equations 
\textbf{1} (1996) 73--90.

\bibitem{Dai2006} Q. Y. Dai and L. H. Peng, Existence and nonexistence of
global classical solutions to porous medium and plasma equations with
singular sources, Acta Math. Sin. (Engl. Ser.) \textbf{22} 2 (2006), 485-496.

\bibitem{Dao2016} N.A. Dao and J.I. D\'{\i}az, A gradient estimate to a
degenerate parabolic equation with a singular absorption term: the global
quenching phenomena, J. Math. Anal. Appl. \textbf{437} 1 (2016), 445-473.

\bibitem{Dao2017} N.A. Dao and J.I. D\'{\i}az, The extinction versus the
blow-up: global and non-global existence of solutions of source types of
degenerate parabolic equations with a singular absorption, J. Differential
Equations \textbf{263} 10 (2017), 6764-6804.

\bibitem{Dao2019b} N.A. Dao and J.I. D\'{\i}az and H.V. Kha, Complete
quenching phenomenon and instantaneous shrinking of support of solutions of
degenerate parabolic equations with nonlinear singular absorption, Proc.
Roy. Soc. Edinburgh Sect. \textbf{A149} 5 (2019), 1323-1346.

\bibitem{Dao2016b} A.N. Dao, J.I. D\'{\i}az and P. Sauvy, Quenching
phenomenon of singular parabolic problems with $L^{1}$ initial data,
Electron. J. Differential Equations (2016), Paper No. 136.

\bibitem{Davila2005} J. D\'{a}vila and M. Montenegro, Existence and
asymptotic behavior for a singular parabolic equation. Trans. Amer. Math.
Soc. \textbf{357} 5 (2005), 1801-1828.

\bibitem{Deng1992} K. Deng, Quenching for solutions of a plasma type
equation, Nonlinear Anal. \textbf{18} 8 (1992), 731-742.

\bibitem{Diaz1985} J.I. D\'{\i}az, \textit{Nonlinear partial differential
equations and free boundaries,} Pitman, Boston, MA, 1985.

\bibitem{Diaz1991} J.I. D\'{\i}az, Simetrizaci\'{o}n de problemas parab\'{o}%
licos no lineales: Aplicaci\'{o}n a ecuaciones de reacci\'{o}n-difusi\'{o}n.
Memorias de la Real Acad. de Ciencias Exactas, F\'{\i}sicas y Naturales,
Tomo XXVII. 1991.

\bibitem{Diaz2014} J.I. D\'{\i}az, On the free boundary for quenching type
parabolic problems via local energy methods. Commun. Pure Appl. Anal. 
\textbf{13} 5 (2014), 1799-1814.

\bibitem{Diaz2018} J.I. D\'{\i}az, D. G\'{o}mez--Castro, J.M. Rakotoson and
R. Temam, Linear diffusion with singular absorption potential and/or
unbounded convective flow: the weighted space approach, Discrete and
Continuous Dynamical Systems \textbf{38} 2 (2018), 509--546.

\bibitem{Diaz-Hernandez 1987} J.I. D\'{\i}az and J. Hern\'{a}ndez,
Qualitative properties of free boundaries for some nonlinear degenerate
parabolic equations. In \textit{Nonlinear Parabolic Equations: Qualitative
Properties of Solutions }(L. Boccardo y A.Tesei eds.), Pitman, London, 1987,
85-93.

\bibitem{DHRako} J.I. D\'{\i}az, J. Hern\'{a}ndez and J. M. Rakotoson, On
very weak positive solutions to some semilinear elliptic problems with
simultaneous singular nonlinear and spatial dependence terms, Milan J. Maths.%
\textbf{\ 79} (2011), 233-245.

\bibitem{Diaz-Giacomoni} J.I. D\'{\i}az and J. Giacomoni, Uniquenees and
monotone continuous dependence of solutions for a class of singular
parabolic problems, To appear.

\bibitem{Diaz1987b} J.I. D\'{\i}az and R. Kersner, On a nonlinear degenerate
parabolic equationin in filtration or evaporation through a porous medium.
J. Differential Equations \textbf{69} 3 (1987), 368-403.

\bibitem{Diaz-Mingazzini} J.I. D\'{\i}az and T. Mingazzini, Free boundaries
touching the boundary of the domain for some reaction-diffusion problems.
Nonlinear Analysis Series A: Theory, Methods and Applications\textbf{\ 119}
(2015), 275--294.

\bibitem{Diaz1987} J.I. D\'{\i}az, J.-M. Morel and L. Oswald, An elliptic
equation with singular nonlinearity, Comm. Partial Differential Equations 
\textbf{12} (1987), 1333-1344.

\bibitem{Diaz2010} J.I. D\'{\i}az and J.M. Rakotoson, On very weak solutions
of semilinear elliptic equations with right hand side data integrable with
respect to the distance to the boundary, Discrete and Continuum Dynamical
Systems \textbf{27} 3 (2010), 1037-1058.

\bibitem{Diaz1983} J.I. D\'{\i}az and L. V\'{e}ron, Existence Theory and
Qualitative Properties of the Solutions of Some First Order Quasilinear
Variational Inequalities, Indiana University Mathematics Journal \textbf{32}
3 (1983), 319-361.

\bibitem{Diaz-Veron} J.I. D\'{\i}az and L. Veron. Local vanishing properties
of solutions of elliptic and parabolic quasilinear equations, Transsactions
of Am. Math. Soc., \textbf{290} 2 (1985), 787-814.

\bibitem{Filippas-Guo} S. Filippas and J.-S.Guo, Quenching profiles for
one-dimensional semilinear heat equations, Quart. Appl. Math. \textbf{51}
(1993), 713-729.

\bibitem{Fulks1960} W. Fulks and J.S. Maybee, A singular non-linear
equation, Osaka Math. J. \textbf{12 }(1960), 1-19.

\bibitem{Galactionov-Vazquez1995} V.A. Galaktionov and J.L. V\'{a}zquez,
Necessary and sufficient conditions for complete blow-up and extinction for
one-dimensional quasilinear heat equations. Arch. Rational Mech. Anal. 
\textbf{129}, (1995) 225--244.

\bibitem{Giacomoni} J. Giacomoni, P. Sauvy and S. Shmarev, Complete
quenching for a quasilinear parabolic equation, J. Math. Anal. Appl. \textbf{%
410} (2014), 607--624.

\bibitem{Gilding2003} B.H. Gilding and R. Kerner, Instantaneous extinction,
step discontinuities and blow-up, Nonlinearity \textbf{16 }(2003), 843-854.

\bibitem{Guo 2007} Z. Guo and J. Wei, On the Cauchy problem for a
reaction--diffusion equation with a singular nonlinearity, J. Differential
Equations \textbf{240} (2007), 279--323.

\bibitem{Hernandez2006} J. Hern\'{a}ndez and F.J. Mancebo, Singular elliptic
and parabolic equations. In: Handbook of Differential Equations: Stationary
Partial Differential Equations, ed. by M. Chipot and P. Quittner,
North-Holland, 2006, 317-400.

\bibitem{Kalashnikov} A.S. Kalashnikov, Some problems of the qualitative
theory of nonlinear degenerate second order parabolic equations. Russ. Math.
Surv. \textbf{42} (1987), 169--222.

\bibitem{Kawarada1974/75} H. Kawarada, On solutions of initial-boundary
problem for u$_{t}$ = u$_{xx}$ +1/(1-u), Publ. Res. Inst. Math. Sci. \textbf{%
10 }3 (1974/75), 729-736.

\bibitem{Kawohl1992b} B. Kawohl, Remarks on quenching, blow up and dead
cores. In: \textit{Nonlinear diffusion equations and their equilibrium states%
} (Gregynog, 1989), Birkhauser, Boston, MA, 1992, 275-286.

\bibitem{Kawohl1996} B. Kawohl, Remarks on quenching. Doc. Math. \textbf{1}
(1996), 199-208.

\bibitem{Kawohl1992a} B. Kawohl and R. Kersner, On degenerate diffusion with
very strong absorption. Math. Methods Appl. Sci. \textbf{15} 7 (1992),
469-477.

\bibitem{Hui 2009} K.M. Hui, Growth rate and extinction rate of a reaction
diffusion equation with a singular nonlinearity, Differential and Integral
Equations, \textbf{22 }(2009), 771-786\textbf{.}

\bibitem{Ladyvzenskaja1968} O. A. Ladyzenskaja, V. A. Solonnikov, and N. N.
Ural' ceva, \textit{Linear and quasilinear equations of parabolic type, }%
American Mathematical Society, Providence, R.I., 1968.

\bibitem{Levine1993} H.A. Levine, Quenching and beyond: a survey of recent
results. Nonlinear mathematical problems in industry, II (Iwaki, 1992). Vol.
2. GAKUTO Internat. Ser. Math.Sci. Appl. Gakkotosho, Tokyo, 1993, 501-512.

\bibitem{Montenegro2011} M. Montenegro, Complete quenching for singular
parabolic problems, J. Math. Anal. Appl. \textbf{384} 2 (2011), 591-596.

\bibitem{Pelesio2002} J.A. Pelesio and D. H. Bernstein, \textit{Modeling
MEMS and NEMS}, Chapman Hall and CRC Press, 2002.

\bibitem{Phillips1987} D. Phillips, Existence of solutions of quenching
problems, Appl. Anal. \textbf{24} 4 (1987), 253-264.

\bibitem{Porreta1999} A.Porretta, Existence results for nonlinear parabolic
equations via strong convergence of truncations, Ann. Mat. Pura Appl. 
\textbf{177} (1999), 143-172.

\bibitem{Rakotoson2011} J.M. Rakotoson, Regularity of a very weak solution
for parabolic equations and applications,\textit{\ Advances in Differential
Equations} \textbf{16} 9-10 (2011), 867-894.

\bibitem{Vazquez2007} J. L. V\'{a}zquez, \textit{The porous medium equation}%
. \textit{Mathematical theory}, Oxford University Press, Oxford, 2007.

\bibitem{Veron1979} L.V\'{e}ron. Effets r\'{e}gularisants de semi-groupes
non lin\'{e}aires dans des espaces de Banach. Ann. Fac. Sci. Toulouse Math. 
\textbf{5} \ (1979), 171-200.

\bibitem{Veron1996} L. Veron, \textit{Singularities of solutions of second
order quasilinear equations}, Pitman-Longman, Edinburgh Gate, Harlow, 1996.

\bibitem{Veron2004} L. V\'{e}ron, Elliptic equations involving measures, in 
\textit{Stationary Partial Differential Equations, \textquotedblleft
Handbook of differential equations",} (eds. M. Chipot and P. Quittner), Vol.
1, Elsevier: Amsterdam, (2004), 593-712.

\bibitem{Winkler2007} M. Winkler, Nonuniqueness in the quenching problem,
Math. Ann. \textbf{339} 3 (2007), 559-597.

\bibitem{Witelski2000} T.P. Witelski and A. J. Bernoff, Dynamics of
three-dimensional thin film rupture, Physica D \textbf{147 }(2000), 155-176.
\end{thebibliography}

\end{document}